\documentclass[12pt]{article}
\input epsf.tex


\usepackage{amsmath}
\usepackage{amsthm}
\usepackage{amsfonts}
\usepackage{amssymb}
\usepackage{graphicx}
\usepackage{latexsym}
\usepackage{amsmath,amsthm,amsfonts,amscd,amssymb,comment,eucal,latexsym,mathrsfs}
\usepackage{amsmath,amsthm,amsfonts,amssymb}

\usepackage{epsfig}

\usepackage{amssymb}
\usepackage[all]{xy}
\xyoption{poly}
\usepackage{fancyhdr}
\usepackage{wrapfig}



\theoremstyle{plain}
\newtheorem{thm}{Theorem}[section]
\newtheorem{prop}[thm]{Proposition}
\newtheorem{lem}[thm]{Lemma}
\newtheorem{cor}[thm]{Corollary}
\newtheorem{problem}[thm]{Problem}

\theoremstyle{definition}
\newtheorem{defn}{Definition}
\newtheorem{remark}{Remark}
\newtheorem{example}{Example}
\newtheorem{question}{Question}

\newtheorem{notation}{Notation}



\topmargin 15pt
\advance \topmargin by -\headheight
\advance \topmargin by -\headsep
\textheight 8.6in
\oddsidemargin 0pt
\evensidemargin \oddsidemargin
\marginparwidth 0.5in
\textwidth 6.5in

\def\area{{\textrm{area}}}

\def\C{{\mathbb C}}

\def\cc{{\curvearrowright}}
\def\covrad{{\textrm{covrad}}}

\def\dist{{\textrm{dist}}}

\def\EE{{\mathbb E}}

\def\F{{\mathbb F}}
\def\cF{{\mathcal F}}

\def\Free{\textrm{Free}}

\def\cG{{\mathcal G}}

\def\H{{\mathbb H}}

\def\Isom{{\rm{Isom}}}

\def\Leb{{\textrm{Leb}}}
\def\Law{{\textrm{Law}}}

\def\cM{{\mathcal M}}
\def\M{{\mathbb M}}
\def\MS{{\mathbb{MS}}}
\def\MSF{{\mathbb{MSF}}}

\def\DM{{\mathbb{DM}}}

\def\N{{\mathbb N}}

\def\cO{{\cal O}}

\def\Q{{\mathbb{Q}}}

\def\R{{\mathbb R}}

\def\RSC{\textrm{RSC}}

\def\bS{{{S}}}

\def\supp{{\textrm{supp}}}

\def\cT{{\mathcal T}}

\def\chix{{\raise.5ex\hbox{$\chi$}}}

\def\vol{\textrm{vol}}

\def\Z{{\mathbb Z}}

\linespread{1.4}

\begin{document}
\title{Cheeger constants and $L^2$-Betti numbers}
\author{Lewis Bowen\footnote{email:lpbowen@math.utexas.edu} \\ University of Texas at Austin}

\maketitle
\begin{abstract}
We prove the existence of positive lower bounds on the Cheeger constants of manifolds of the form $X/\Gamma$ where $X$ is a contractible Riemannian manifold and $\Gamma<\Isom(X)$ is a discrete subgroup, typically with infinite co-volume. The existence depends on the $L^2$-Betti numbers of $\Gamma$, its subgroups and of a uniform lattice of $\Isom(X)$. As an application, we show the existence of a uniform positive lower bound on the Cheeger constant of any manifold of the form $\H^4/\Gamma$ where $\H^4$ is real hyperbolic 4-space and $\Gamma<\Isom(\H^4)$ is discrete and isomorphic to a subgroup of the fundamental group of a complete finite-volume hyperbolic 3-manifold. Via Patterson-Sullivan theory, this implies the existence of a uniform positive upper bound on the Hausdorff dimension of the conical limit set of such a $\Gamma$ when $\Gamma$ is geometrically finite. Another application shows the existence of a uniform positive lower bound on the zero-th eigenvalue of the Laplacian of $\H^n/\Gamma$ over all discrete free groups $\Gamma<\Isom(\H^n)$ whenever $n\ge 4$ is even (the bound depends on $n$). This extends results of Phillips-Sarnak and Doyle who obtained such bounds for $n\ge 3$ when $\Gamma$ is a finitely generated Schottky group.
 \end{abstract}


\noindent
{\bf Keywords}: Cheeger constant, L2 Betti numbers, hyperbolic manifold\\
{\bf MSC}:57S30 \\

\noindent
\tableofcontents



\section{Introduction}



The Cheeger constant of a smooth Riemannian manifold $M$ is defined by
$$h(M):= \inf \frac{ \textrm{\area}(\partial M_0) }{\textrm{vol}(M_0)}$$
where the infimum is over all smooth compact submanifolds $M_0 \subset M$ with $\vol(M_0) \le \vol(M)/2$. For most of the paper we will be applying the Cheeger constant to infinite volume manifolds in which case the infimum in the formula above is over all smooth compact submanifolds. In this case, we could call $h(M)$ the {\em F\o lner constant} instead of the Cheeger constant. For example, if $M$ has infinite volume then $h(M)=0$ if and only if $M$ is amenable. This paper is motivated by the following general problem:

\begin{problem}
Given a contractible smooth Riemannian manifold $X$ and a family $\cF$ of abstract groups let $I(X|\cF) = \inf_\Gamma h(X/\Gamma)$ where the infimum is over all $\Gamma< \Isom(X)$ such that 
\begin{itemize}
\item $\Gamma$ acts freely and properly discontinuously on $X$;
\item $\Gamma$ is isomorphic to a group in $\cF$.
\end{itemize}
Compute $I(X|\cF)$ for interesting special cases (e.g., when $X$ is real hyperbolic $n$-space $\H^n$ and $\cF$ is the class of free groups). We are especially interesting in knowing whether $I(X|\cF)=0$.
\end{problem}
For example, let $\Free$ denote the class of free groups. For every $\epsilon>0$ there is a free group $\Gamma<\Isom(\H^2)$ such that the compact core of $\H^2/\Gamma$ is a pair of pants with geodesic boundary components each of length $\epsilon$. The compact core has area $2\pi$ but $\H^2/\Gamma$ has infinite area. It follows that $h(\H^2/\Gamma) \le \frac{3\epsilon}{2\pi}.$ Since $\epsilon$ is arbitrary, $I(\H^2|\Free)=0$. Likewise, $\Isom(\H^3)$ admits a nonuniform lattice isomorphic to the fundamental group of a fiber bundle over the circle so that the fundamental group of the fiber surface is a rank 2 free subgroup $\Lambda$ of $\Isom(\H^3)$ with $h(\H^3/\Lambda)=0$. So $I(\H^3|\Free)=0$. The exact value of $I(\H^n|\Free)$ is unknown for $n>3$. It is not even known whether $I(\H^n|\Free)$ is monotone in $n$.

To explain our main result it is convenient to introduce the following definitions. 
\begin{defn}
Given a Riemannian manifold $X$ and $\Gamma<\Isom(X)$, we say that $\Gamma$ is {\em geometric} if the action of $\Gamma$ on $X$ is free and properly discontinuous. This ensures that $X/\Gamma$ is a manifold and the quotient map $X \to X/\Gamma$ is a cover. 
\end{defn}


\begin{defn}
Let us say that a residually finite countable group $\Gamma$ has {\em asymptotically vanishing lower $d$-th Betti number} if 
$$\liminf_N \frac{b_d(N)}{[\Gamma:N]} = 0$$
where the liminf is with respect to the net of finite-index normal subgroups $N \vartriangleleft \Gamma$ ordered by reverse inclusion. Equivalently, this holds if for every $\epsilon>0$, for every finite-index normal subgroup $N \vartriangleleft \Gamma$ there exists a subgroup $N' <N$ such that $N'$ is normal and has finite-index in $\Gamma$ and $\frac{b_d(N')}{[\Gamma:N']} < \epsilon$.
For example, if $\Gamma$ has a finite classiyfing space then $\Gamma$ has asymptotically vanishing lower $d$-th Betti number if and only if $b^{(2)}_d(\Gamma)=0$ by L\"uck approximation's Theorem \cite{Lu94} (where $b^{(2)}_d(\Gamma)$ denotes the $d$-dimensional  $L^2$-Betti number of $\Gamma$). 
\end{defn}

Our main result is:
\begin{thm}\label{thm:main-manifold}
Let $X$ be a smooth contractible complete Riemannian manifold.  Let $\cG_d$ be the class of all residually finite countable groups $\Gamma$ such that every finitely generated subgroup $\Gamma' < \Gamma$ has asymptotically vanishing lower $d$-th Betti number. Suppose there is a residually finite geometric subgroup $\Lambda < \Isom(X)$ such that $X/\Lambda$ is compact and $b^{(2)}_d(\Lambda)>0$. Then $I(X| \cG_d)>0$.
\end{thm}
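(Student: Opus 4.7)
The plan is to argue by contradiction. Supposing $I(X|\cG_d) = 0$, fix a sequence of geometric $\Gamma_n < \Isom(X)$ in $\cG_d$ with $h(X/\Gamma_n) \to 0$, witnessed by compact Folner submanifolds $M_n \subset X/\Gamma_n$. First I would reduce to the finitely generated case: lift $M_n$ to a single copy $\tilde M_n \subset X$, and let $\Gamma_n' < \Gamma_n$ be the subgroup generated by the (finitely many, by proper discontinuity) $g$ with $g\tilde M_n \cap \tilde M_n \ne \emptyset$. Then $\tilde M_n$ embeds in $X/\Gamma_n'$ as a submanifold with the same boundary-to-volume ratio, and, since $\cG_d$ is closed under passing to subgroups, $\Gamma_n' \in \cG_d$. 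So I may assume each $\Gamma_n$ is finitely generated and is its own witness for the asymptotic-vanishing hypothesis.

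The strategy is then to use $M_n$ together with $b^{(2)}_d(\Lambda) > 0$ to manufacture, for each sufficiently large $n$, a finite-index normal subgroup $N \triangleleft \Gamma_n$ with $b_d(N)/[\Gamma_n:N] \ge c$ for some positive constant $c = c(X,\Lambda)$. Since each $\Gamma_n \in \cG_d$ is itself finitely generated, this would contradict the defining asymptotic-vanishing property applied with $\Gamma'=\Gamma_n$. The constant $c$ should come from L\"uck approximation applied to the residually finite $\Lambda$: pick a finite-index normal $N_\Lambda \triangleleft \Lambda$ so that $Y := X/N_\Lambda$ is a closed manifold with $b_d(Y) \ge \tfrac{1}{2}\, b^{(2)}_d(\Lambda)\cdot [\Lambda:N_\Lambda]$. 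This $Y$ will play the role of a building block carrying many $d$-cycles.

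To produce $N$, I would fix a fine $\Lambda$-invariant triangulation of $X$ (for instance the Voronoi decomposition of a $\Lambda$-orbit), translating the smooth Cheeger bound into a combinatorial isoperimetric bound on $M_n$. Then, using residual finiteness of $\Gamma_n$ together with a pigeonhole over the double-coset space $\Lambda \backslash \Isom(X) / \Gamma_n$, pass to a finite cover of $X/\Gamma_n$ in which the preimage of (most of) $M_n$ decomposes into disjoint isometric copies of $Y$, attached to the remainder along a hypersurface whose total area is negligible relative to volume by the Folner bound. A Mayer--Vietoris / excision argument then bounds $b_d$ of the cover below by roughly $\frac{b_d(Y)}{\vol(Y)}\cdot \vol(X/N)$, the boundary defect being absorbed by the smallness of $\area(\partial M_n)/\vol(M_n)$, which delivers the desired lower bound on $b_d(N)/[\Gamma_n:N]$.

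The main obstacle is precisely this last compatibility step: $\Lambda$ and $\Gamma_n$ are a priori unrelated discrete subgroups of $\Isom(X)$, so coherently tiling a finite cover of $X/\Gamma_n$ by copies of $Y$ will require a careful combination of residual finiteness of both groups with the Folner geometry. I expect the technical heart of the proof to lie there, most likely formalised through a groupoid or equivalence-relation version of L\"uck approximation that bypasses the lack of a direct homomorphic relationship between $\Lambda$ and $\Gamma_n$.
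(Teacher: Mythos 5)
Your skeleton matches the paper's at a high level: argue by contradiction, replace each $\Gamma_n$ by the finitely generated subgroup carried by $\pi_1(M_n)$, bring in L\"uck approximation for $\Lambda$, and use a Mayer--Vietoris argument to absorb the boundary defect. But the engine driving the argument is different, and the engine you propose does not obviously work.

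The paper's key mechanism is Benjamini--Schramm convergence of metric measure spaces (\S2--\S4). After the cleanup in Lemma \ref{lem:injrad} (passing to $\Gamma''_i$ with blown-up covering radius, and --- importantly --- chosen so that $b_d(\Gamma''_i)/[\Gamma'_i:\Gamma''_i]$ approximates the asymptotic lower Betti number $\widehat{b}_d(\Gamma'_i)$), one shows that the thickened F\o lner sets $U'_i$ BS-converge to the unique unimodular measure $\mu_\infty$ concentrated on pointed copies of $X$ (Lemma \ref{lem:X2}), and that the congruence covers $X/\Lambda_i$ BS-converge to the \emph{same} $\mu_\infty$. The generalization of Elek's theorem (Theorem \ref{thm:Elek}) then says both sequences of normalized Betti numbers converge, and since the limits only depend on $\mu_\infty$, they agree: $\lim b_d(U'_i)/\vol(U'_i) = b^{(2)}_d(\Lambda)/\vol(X/\Lambda) > 0$. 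This is how information is transferred from $\Lambda$ to $\Gamma_n$ \emph{without ever geometrically relating the two groups}. Your replacement for this --- passing to a finite cover of $X/\Gamma_n$ in which most of $M_n$ decomposes into disjoint isometric copies of $Y = X/N_\Lambda$ --- does not go through: $\Lambda$ and $\Gamma_n$ are a priori unrelated discrete subgroups of $\Isom(X)$, so no finite cover of $X/\Gamma_n$ need carry any $\Lambda$-invariant tiling; the double-coset space $\Lambda\backslash\Isom(X)/\Gamma_n$ is in general uncountable, so pigeonholing over it is not available. You correctly identify this step as the heart of the matter and guess at "a groupoid or equivalence-relation version of L\"uck approximation"; the actual tool (BS-convergence of pointed mm-spaces plus Elek's theorem) is of that flavor, but it proceeds by soft weak-* limit arguments, not by constructing any tiling.

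One secondary issue: you aim to produce, for each $n$, a single finite-index normal $N \triangleleft \Gamma_n$ with $b_d(N)/[\Gamma_n:N] \ge c$. That alone does not contradict asymptotic vanishing, which is a $\liminf$ condition; you would need the bound for a cofinal family. The paper handles this by choosing $\Gamma''_i$ so that $b_d(\Gamma''_i)/[\Gamma'_i:\Gamma''_i]$ is within $\vol(M'_i)/i$ of $\widehat{b}_d(\Gamma'_i)$, which is what makes the final inequality yield $\widehat{b}_d(\Gamma'_i) > 0$ rather than merely a lower bound on one Betti number.
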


This is derived from a more general result (Theorem \ref{thm:main0}) concerning metric measure spaces. In general, it appears to be a difficult problem to determine whether a given group is in $\cG_d$. However, we show in Proposition \ref{prop:3m} below that if $\Gamma$ is the fundamental group of a complete finite-volume hyperbolic 3-manifold then $\Gamma \in \cG_d$ for all $d>1$ (and the same holds for every subgroup of $\Gamma$). Using this we obtain:
\begin{cor}\label{cor:app1}
If $\Gamma<\Isom(\H^n)$ is geometric and isomorphic with a subgroup of the fundamental group of a complete finite volume hyperbolic 3-manifold, and $n\ge 4$ is an even integer then $h(\H^n/\Gamma) \ge I(\H^n|\cG_{n/2}) >0$. In particular, $I(\H^n|\Free)>0$ for every even integer $n\ge 4$.
\end{cor}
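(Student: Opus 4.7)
The plan is to apply Theorem \ref{thm:main-manifold} directly with $X = \H^n$ and $d = n/2$. Two hypotheses must be verified: that there exists a residually finite cocompact lattice $\Lambda < \Isom(\H^n)$ with $b^{(2)}_{n/2}(\Lambda) > 0$, and that the group $\Gamma$ in the corollary belongs to the class $\cG_{n/2}$.

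For the lattice $\Lambda$, I would use Borel's arithmetic construction to produce a closed hyperbolic $n$-manifold $M$ for every $n \ge 2$; after passing to a torsion-free finite-index subgroup via Selberg's lemma, set $\Lambda = \pi_1(M)$. Then $\Lambda$ is cocompact in $\Isom(\H^n)$, and since it is linear (sitting in $SO(n,1)$) it is residually finite by Malcev's theorem. Dodziuk's computation of the $L^2$-cohomology of $\H^n$ shows that $b^{(2)}_k(\Lambda) = 0$ whenever $k \ne n/2$. Atiyah's $L^2$-index theorem then gives
$$(-1)^{n/2} b^{(2)}_{n/2}(\Lambda) \;=\; \sum_{k} (-1)^k b^{(2)}_k(\Lambda) \;=\; \chi(M),$$
and Chern--Gauss--Bonnet makes $\chi(M)$ a nonzero constant multiple of $\vol(M)$ when $n$ is even. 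Hence $b^{(2)}_{n/2}(\Lambda) > 0$.

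For the membership $\Gamma \in \cG_{n/2}$: by hypothesis $\Gamma$ embeds into $\pi_1(M')$ for some complete finite-volume hyperbolic $3$-manifold $M'$. Proposition \ref{prop:3m} asserts that every subgroup of $\pi_1(M')$ lies in $\cG_d$ for all $d > 1$; since $n \ge 4$ is even, $n/2 \ge 2 > 1$, so $\Gamma \in \cG_{n/2}$. Theorem \ref{thm:main-manifold} therefore produces $I(\H^n|\cG_{n/2}) > 0$, and the inequality $h(\H^n/\Gamma) \ge I(\H^n|\cG_{n/2})$ is immediate from the definition of $I(\H^n|\cF)$.

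For the ``in particular'' clause, every countable free group embeds into $\pi_1(M')$ for some finite-volume hyperbolic $3$-manifold $M'$: for instance, $\pi_1$ of the figure-eight knot complement contains a copy of $F_2$, and $F_2$ contains a free subgroup of every countable rank (take the subgroup generated by $\{a^n b a^{-n}\}_{n \ge 1}$, say). Consequently $\Free \subseteq \cG_{n/2}$ for every even $n \ge 4$, so the first part of the corollary yields the uniform bound $I(\H^n|\Free) \ge I(\H^n|\cG_{n/2}) > 0$. The only non-bookkeeping step in this derivation is the positivity of the middle $L^2$-Betti number of $\Lambda$, which is classical; the substantive content of the corollary lies entirely in Theorem \ref{thm:main-manifold} and Proposition \ref{prop:3m}, and this derivation is essentially a matter of fitting those pieces together.
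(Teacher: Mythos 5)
Your proposal is correct and takes essentially the same route as the paper: invoke Theorem \ref{thm:main-manifold} with $d=n/2$, supply a residually finite cocompact lattice $\Lambda<\Isom(\H^n)$ with $b^{(2)}_{n/2}(\Lambda)>0$ (the paper packages this as Lemma \ref{lem:Dodzuik}, citing \cite[Theorem 5.12]{Lu02}, whereas you unpack it into Dodziuk's vanishing theorem plus Atiyah's $L^2$-index theorem plus Chern--Gauss--Bonnet, which is precisely what lies behind that reference), and verify $\Gamma\in\cG_{n/2}$ via Proposition \ref{prop:3m}. The one spot where you take a genuinely different path is the ``in particular'' clause: you reduce it to the first sentence by embedding an arbitrary countable free group into $\pi_1$ of the figure-eight knot complement (via $F_2$ and the subgroup $\langle a^n b a^{-n}\rangle_{n\ge 1}$). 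That works, but it is a detour: the membership $\Free\subseteq\cG_{n/2}$ for $n/2\ge 2$ follows immediately from the Nielsen--Schreier theorem (subgroups of free groups are free) together with the fact that a finitely generated free group has a one-dimensional classifying space, hence $b^{(2)}_d=0$ for all $d\ge 2$, and free groups are residually finite. Both arguments are valid; the direct one avoids invoking Proposition \ref{prop:3m} (and hence almost treeability) for the free-group case, while yours has the minor virtue of deriving the second sentence literally as a special case of the first.
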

Observe that we do not require the group $\Gamma$ to be finitely generated in the result above.

Instead of the Cheeger constant, one might be interested in the bottom of the spectrum of the Laplace operator of a smooth Riemannian manifold $M$, which we denote by $\lambda_0(M)$.  By \cite{Ch69}
\begin{eqnarray}\label{eqn:Cheeger}
h(M)^2/4 \le \lambda_0(M).
\end{eqnarray}
More precisely, Cheeger proved (\ref{eqn:Cheeger}) when $M$ is compact but it is well-known that it generalizes to the noncompact case. 

In order to compare Corollary \ref{cor:app1} with previous results, recall that a {\em classical Schottky group} is a subgroup of $\Isom(\H^n)$ generated by elements $g_1,\ldots, g_m$ such that there exist pairwise disjoint conformally round balls $B_1, B_2, \ldots, B_m$ and  $B'_1,B'_2, \ldots, B'_m$ in the sphere at infinity $\bS^{n-1}=\partial \H^n$ such that $g_i( \bS^{n-1}/B'_i)  = \textrm{int}(B_i)$ for every $i$. Phillips and Sarnak \cite[Theorem 5.4]{PS85} showed that for every $n\ge 4$ there is a constant $f_n>0$ such that if $\Gamma$ is any classical Schottky subgroup of $\Isom(\H^n)$ then  $\lambda_0(\H^n/\Gamma) \ge f_n$ where $\lambda_0$ denotes the bottom of the spectrum of the Laplace operator. This result was extended by Doyle \cite{Do88} to $n=3$. No such bound exists for $n=2$.

Classical Schottky groups are free groups so it makes sense to ask whether these results hold for free groups more generally. Corollary \ref{cor:app1} and (\ref{eqn:Cheeger}) show that indeed  $\lambda_0(\H^n/\Gamma)\ge I(\H^n|\cG_{n/2})^2/4>0$ whenever $n \ge 4$ is even and $\Gamma$ is a free group. 



Instead of the Cheeger constant or $\lambda_0$, one might be interested in the Hausdorff dimension of the limit set. The limit set $L\Gamma$ of a subgroup $\Gamma < \Isom(\H^n)$ is the intersection of the sphere at infinity $\bS^{n-1}=\partial \H^n$ with the closure of $\Gamma x$ for any $x\in \H^n$. Let $\textrm{HD}(L\Gamma)$ denote the Hausdorff dimension of $L\Gamma$. In \cite[Theorem 2.21]{Su87}, Sullivan shows that if $\Gamma<\Isom(\H^n)$ is geometrically finite without cusps and $\textrm{HD}(L\Gamma) \ge (n-1)/2$ then
\begin{eqnarray}\label{eqn:PS}
\lambda_0(\H^n/\Gamma) = (n-1-\textrm{HD}(L\Gamma))\textrm{HD}(L\Gamma).
\end{eqnarray}
(Our definition of $\lambda_0$ differs from the definition in \cite{Su87} by a sign). If $\Gamma$ is merely geometrically finite then this result holds with the limit set replaced by the conical limit set by \cite[Corollary 2.6]{BJ97} and \cite[Theorem 2.17]{Su87}. These results has been generalized to other rank 1 symmetric spaces in \cite{CI99}. From Corollary \ref{cor:app1} and (\ref{eqn:Cheeger}) we now obtain:

\begin{cor}\label{cor:app2}
For every integer $n\ge 2$, there exists a number $d_{2n} < 2n-1$ such that if $\Lambda<\Isom(\H^{2n})$ is a geometrically finite discrete group isomorphic to a subgroup of the fundamental group of a finite-volume complete hyperbolic 3-manifold then the Hausdorff dimension of the conical limit set of $\Lambda$ is at most $d_{2n}$.
\end{cor}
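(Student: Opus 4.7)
The plan is to chain together three ingredients already on the table: Corollary \ref{cor:app1} to bound the Cheeger constant of $\H^{2n}/\Lambda$ from below, the Cheeger inequality (\ref{eqn:Cheeger}) to pass from $h$ to $\lambda_0$, and the conical-limit-set version of the Patterson-Sullivan formula (\ref{eqn:PS}) to convert a uniform lower bound on $\lambda_0$ into a uniform upper bound on $\textrm{HD}(L_c\Lambda)$ that is strictly less than $2n-1$.

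First I would observe that $\Lambda$ is automatically geometric: since it embeds in the fundamental group of a complete finite-volume hyperbolic 3-manifold it is torsion-free, and a torsion-free discrete subgroup of $\Isom(\H^{2n})$ acts freely and properly discontinuously on $\H^{2n}$. Because $2n\ge 4$ is even, Corollary \ref{cor:app1} then gives
\[
h(\H^{2n}/\Lambda) \ge I(\H^{2n}|\cG_n) > 0,
\]
and combining this with (\ref{eqn:Cheeger}) yields the uniform bound
\[
\lambda_0(\H^{2n}/\Lambda) \ge c_{2n} := I(\H^{2n}|\cG_n)^2/4 > 0,
\]
depending only on $n$.

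Next, since $\Lambda$ is geometrically finite, the conical-limit-set version of (\ref{eqn:PS}) applies. Setting $\delta := \textrm{HD}(L_c\Lambda)$, if $\delta \ge (2n-1)/2$ then
\[
c_{2n} \le \lambda_0(\H^{2n}/\Lambda) = (2n-1-\delta)\delta,
\]
and since the map $t \mapsto (2n-1-t)t$ is strictly decreasing on $[(2n-1)/2, 2n-1]$ one can solve the quadratic to obtain
\[
\delta \le d_{2n} := \frac{(2n-1) + \sqrt{(2n-1)^2 - 4c_{2n}}}{2} < 2n-1.
\]
Because $d_{2n}\ge (2n-1)/2$, the same bound $\delta \le d_{2n}$ also holds trivially in the remaining case $\delta < (2n-1)/2$, so this $d_{2n}$ works uniformly over all admissible $\Lambda$.

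No serious obstacle remains: the substantive input is entirely contained in Corollary \ref{cor:app1} and in the Bishop-Jones/Sullivan identification of $\lambda_0$ with $(n{-}1{-}\delta)\delta$. The only minor point requiring care is the case split on whether $\delta \ge (2n-1)/2$, since (\ref{eqn:PS}) is only asserted in that regime; handling both halves of the dichotomy with the same constant $d_{2n}$ is immediate from the monotonicity argument above.
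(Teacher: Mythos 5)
Your proof is correct and follows exactly the chain of reasoning the paper intends (the paper itself simply states that the corollary follows from Corollary \ref{cor:app1} and inequality (\ref{eqn:Cheeger}), leaving the details to the reader): Corollary \ref{cor:app1} gives a uniform positive lower bound on $h(\H^{2n}/\Lambda)$, Cheeger's inequality converts this into a uniform lower bound $c_{2n}>0$ on $\lambda_0$, and the Bishop--Jones/Sullivan identity $\lambda_0 = (2n-1-\delta)\delta$ in the regime $\delta\ge (2n-1)/2$ then caps $\delta$ by the larger root of the quadratic, while the remaining regime is handled trivially since $d_{2n}\ge(2n-1)/2$. The only step you leave tacit is that the discriminant $(2n-1)^2-4c_{2n}$ is nonnegative, but this is automatic once some admissible $\Lambda$ with $\delta\ge(2n-1)/2$ exists, since then $c_{2n}\le(2n-1-\delta)\delta\le(2n-1)^2/4$; if no such $\Lambda$ exists the statement is vacuous in that range.
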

This corollary partially solves \cite[Problem 10.27, page 530]{Ka08}. Let us mention in passing that the survey  \cite{Ka08} is a rich source for examples and open problems about Kleinian groups in higher dimensions.

It is a long-standing open problem to determine whether there exists a closed real hyperbolic 4-manifold $M$ that fibers over a surface with fiber a surface. Recently U. Hamenst\"adt has proven that no such manifold exists if both base and fiber are closed \cite{Ha13}. However, the other cases remain open. If there is such a manifold, then the universal cover $\widetilde{M}$ is naturally identifiable with hyperbolic space $\H^4$ and therefore the fundamental group $\pi_1(M)$ can be represented as a lattice in $\Isom(\H^4)$. Moreover, the fundamental group of a fiber surface can be represented as a discrete group $\Lambda < \Isom(\H^4)$. This group is isomorphic to the fundamental group of a surface. So Corollary \ref{cor:app1} implies $h(\H^4/\Lambda)>0$. Because $\Lambda$ is a normal subgroup of a lattice, its limit set is the entire 3-sphere boundary of $\H^4$. However, it is not geometrically finite. It might seem reasonable, by analogy with the 3-dimensional case,  to suspect that by deforming $\Lambda$ slightly (or by passing to a subgroup), it should be possible to find, for every $\epsilon>0$, a geometrically finite discrete group $\Lambda' < SO(4,1)$ such that the Hausdorff dimension of the conical limit set of $\Lambda'$ is at least $3-\epsilon$ and $\Lambda'$ is isomorphic to the fundamental group of a compact surface. Corollary \ref{cor:app2} implies this intuition is incorrect. 

\begin{question}
Let $\rm{Surface}$ denote the class of fundamental groups of closed surfaces of genus $g \ge 2$. Is $I(\H^4| \rm{Surface})$ realized? In other words, does there exist a geometric surface group $\Gamma<\Isom(\H^4)$ such that $h(\H^4/\Gamma) = I(\H^4|\rm{Surface})$? Suppose that there exists a closed hyperbolic $4$-manifold which fibers over a surface and $\Gamma$ is the fundamental group of the fiber surface. Then is it true that $h(\H^4/\Gamma) = I(\H^4|\rm{Surface})$? This question admits natural variations by replacing the Cheeger constant with $\lambda_0$ or $\textrm{HD}(L\Gamma)$ for example.
\end{question}

\begin{question}
Is $I(\H^n | \Free)>0$ when $n$ is odd? Does the limit $\lim_{n\to\infty} I(\H^n|\Free)$ exist? If so, is it positive?
\end{question}

\begin{remark}
Corollaries \ref{cor:app1} and \ref{cor:app2} can be generalized to complex hyperbolic space (by using \cite[Theorem 5.12]{Lu02} and Proposition \ref{prop:3m} below). In fact, complex-hyperbolic manifolds are always even dimensional and it is known that the $L^2$-Betti number of a lattice acting complex hyperbolic space does not vanish in the middle dimension. Therefore, we obtain $I(\C\H^n|\cG_n)>0$ for all $n\ge 2$. 
\end{remark}

\begin{remark}
There are stronger results for quaternionic hyperbolic space and the octonionic hyperbolic plane because the isometry groups of these spaces have property (T) \cite{Co90, CI99}. In fact it is known that if $\Gamma$ is a geometrically finite subgroup of the isometry group of one of these spaces but $\Gamma$ is not a lattice then there is a nontrivial lower bound on the codimension of the Hausdorff dimension of the limit set of $\Gamma$ which does not depend on $\Gamma$. The analogous statement for real or complex hyperbolic $n$-space is false \cite{Ka08} essentially because there exist lattices which surject onto infinite amenable groups.
\end{remark}





\begin{remark}
It is an open question whether Theorem \ref{thm:main-manifold} holds without the residual finiteness assumptions (either on $\Lambda$ or $\cG_d$). However, in many interesting cases $\Isom(X)$ is linear and therefore, by Mal\'cev's Theorem, every finitely generated subgroup of $\Isom(X)$ is residually finite. 
\end{remark}


%

\begin{remark}
It might be possible to obtain an explicit bound on $I(\H^{2n}|\cG_n)$ from the proof of Theorem \ref{thm:main-manifold} and the results of \cite{El10} which show Betti numbers are testable. 
\end{remark}





\subsection{Outline}

We begin by explaining Benjamini-Schramm convergence of simplicial complexes in \S \ref{sec:BS}. The highlight of this section is G. Elek's result: if $\{K_i\}_{i=1}^\infty$ is a Benjamini-Schramm-convergent sequence of finite connected simplicial complexes then the normalized Betti numbers of $\{K_i\}_{i=1}^\infty$ converge as $i\to\infty$. This result is the key to the whole proof. In \S \ref{sec:mm} we review metric measure spaces, deferring the proofs to the appendix. We generalize G. Elek's result in \S \ref{sec:Elek} to sequences of metric measure spaces following an outline provided by G. Elek in the closed Riemannian manifold case \cite{El12}. \S \ref{sec:L2} reviews $L^2$-Betti numbers and \S \ref{sec:unimodular} provides some tools from proving Benjamini-Schramm convergence. 

The proof of Theorem \ref{thm:main-manifold} is in \S \ref{sec:main}. Here is a brief and rough outline. It suffices to prove the contrapositive: that if $\{\Gamma_i\}_{i=1}^\infty$ is a sequence of residually finite geometric subgroups of $\Isom(X)$ and $\lim_{i\to\infty} h(X/\Gamma_i) \to 0$ then for all but finitely many $i$ there exist subgroups $\Gamma'_i<\Gamma_i$ such that $b^{(2)}_d(\Gamma'_i)>0$. We are assuming the existence of a residually finite uniform lattice $\Lambda < \Isom(X)$ with $b_d^{(2)}(\Lambda)>0$. We use a lemma due to Buser to find compact smooth submanifolds $M_i \subset X/\Gamma_i$ such that for every $r>0$ the ratio $\frac{\vol(N_r(\partial M_i))}{\vol(M_i)}$ tends to zero as $i\to\infty$ where $N_r(\partial M_i)$ denotes the radius $r$ neighborhood of the boundary of $M_i$.  After passing to a subgroup of $\Gamma_i$ if necessary, we can also require that $M_i$ has ``no short homotopically nontrivial loops''. From these results we conclude that $\{M_i\}_{i=1}^\infty$ ``Benjamini-Schramm converges to $X$''. So our generalization of Elek's result implies $\lim_{i\to\infty} \frac{b_d(M_i)}{\vol(M_i)} = \frac{b_d^{(2)}(\Lambda)}{\vol(X/\Lambda)}$ where $b_d(M_i)$ denotes the ordinary $d$-th Betti number of $M_i$.

The Mayer-Vietoris sequence is employed to show (roughly speaking) that the normalized Betti numbers $\frac{b_d(M_i)}{\vol(M_i)}$ are asymptotically bounded by the normalized Betti numbers of $\Gamma_i$. L\"uck approximation and residual finiteness allow us to replace ordinary Betti numbers with $L^2$-Betti numbers and to compare these limits with the $L^2$-Betti numbers of the lattice $\Lambda$, proving Theorem \ref{thm:main-manifold}. In the last section \S \ref{sec:app}, we use treeability, almost treeability and well-known results about $L^2$ Betti numbers of hyperbolic lattices to obtain Corollaries \ref{cor:app2} and \ref{cor:app1} from Theorem \ref{thm:main-manifold}.

{\bf Acknowledgements}. I am grateful to Miklos Ab\'ert for an excellent lecture on $L^2$-Betti numbers, to G\'abor Elek for sharing a rough draft of a proof of the closed manifold case of Theorem \ref{thm:Elek} below and to Naser Zadeh for pointing out errors in previous versions. Part of this work was inspired by discussions and talks at the AIM workshop ``L2 invariants and their relatives for finitely generated groups'' in Palo Alto, CA September 2011. The author is supported in part by NSF grant DMS-0968762 and NSF CAREER Award DMS-0954606.

\section{Benjamini-Schramm convergence of simplicial complexes}\label{sec:BS}

A {\em rooted simplicial complex} is a pair $(K,v)$ where $K$ is a simplicial complex and $v$ is a vertex of $K$. We say $(K_1,v_1)$ and $(K_2,v_2)$ are {\em root-isomorphic} if there is an isomorphism from $K_1$ to $K_2$ which takes $v_1$ to $v_2$. We let $[K,v]$ denote the root-isomorphism class of $(K,v)$.

Let $\RSC$ denote the set of all root-isomorphism classes of connected rooted locally finite simplicial complexes. 


We define a topology on $\RSC$ as follows. Given a finite rooted simplicial complex $(L,w)$ and an integer $r>0$, let $U_r(L,w) \subset \RSC$ be the set of all $[K,v] \in \RSC$ such that the closed ball of radius $r$ centered at $v$ in $K$ is root-isomorphic to $(L,w)$. Here we are employing a standard convention: the closed ball of radius $r$ is the subcomplex consisting of all simplices $\sigma$ in $K$ with the property that every vertex $v'$ of $\sigma$ is of distance at most $r$ from $v$ with respect to the path metric on the 1-skeleton of $K$. 

We give $\RSC$ a topology by declaring each $U_r(L,w)$ to be a closed set. For $\Delta>0$ let $\RSC(\Delta) \subset \RSC$ denote the set of all root-isomorphism classes of connected rooted simplicial complexes $[K,v]$ so that every vertex of $K$ has degree at most $\Delta$. With the subspace topology, $\RSC(\Delta)$ is compact and metrizable. Moreover, each $U_r(L,w) \cap \RSC(\Delta)$ is a clopen subset of $\RSC(\Delta)$.  

\begin{defn}
In general, if $X$ is a topological space, we let $\cM(X)$ denote the space of all Borel measures on $X$ with the weak* topology. Therefore a sequence $\{\lambda_i\}_{i=1}^\infty \subset \cM(X)$ converges to an element $\lambda_\infty \in \cM(X)$ if and only if: for every compactly supported continuous function $f \in C(X)$, $\int f~d\lambda_i$ converges to $\int f~d\lambda_\infty$ as $i\to\infty$. Also let $\cM_1(X)$ denote the subspace of Borel probability measures on $X$.
\end{defn}

Given a finite connected simplical complex $K$, let $\mu_{K} \in \cM_1(\RSC)$ denote
$$\mu_K= \frac{1}{|V(K)|} \sum_{v\in V(K)} \delta_{[K,v]}$$
where $V(K)$ denotes the set of vertices of $K$ and $\delta_{[K,v]}$ denotes the Dirac probability measure concentrated on $[K,v] \in \RSC$. 

A sequence of finite  connected simplicial complexes $\{K_i\}_{i=1}^\infty$ is {\em BS-convergent} ({\em Benjamini-Schramm convergent}) if the sequence $\{\mu_{K_i}\}_{i=1}^\infty$ converges in the weak* topology on $\cM_1(\RSC)$. In the special case in which there is a uniform degree bound $\Delta$ on the $K_i$'s, this means that for every finite $(L,w) \in \RSC$ and every $r>0$, $\lim_{i\to\infty} \mu_{K_i}(U_r(L,w))$ exists. The graph-theoretic version of this notion was introduced in \cite{BS01}. The next lemma is crucial to our entire approach.

\begin{lem} \label{lem:elek-simplicial}
Let $\Delta>0$ and $\{K_i\}^\infty_{i=1}$ be a sequence of finite connected simplicial complexes such that every vertex of every $K_i$ has degree at most $\Delta$. If $\{K_i\}_{i=1}^\infty$ is BS-convergent then
$\lim_{i\to\infty} \frac{b_d(K_i)}{|V(K_i)|}$ exists for any $d\geq 0$ where $b_d(K_i)$ denotes the ordinary $d$-th Betti number of $K_i$.
\end{lem}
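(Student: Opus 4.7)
The plan is to identify the normalized Betti number $b_d(K_i)/|V(K_i)|$ with the mass at zero of the spectral measure of the combinatorial Laplacian $\Delta_d^{(i)}$, show that all polynomial moments of that spectral measure are BS-continuous, and then transfer continuity to the atom at $0$ via a L\"uck-style integrality estimate. The main obstacle is the last step, since weak$^*$ convergence of measures does not in general control atoms.

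For each $i$, let $\Delta_d^{(i)} = \p_{d+1}\p_{d+1}^* + \p_d^*\p_d$ be the combinatorial Laplacian on $C_d(K_i;\R)$ expressed in the standard oriented-simplex basis, so that $b_d(K_i) = \dim\ker\Delta_d^{(i)}$. The degree bound $\Delta$ forces both $n_i := \dim C_d(K_i;\R) \le C_1|V(K_i)|$ and $\|\Delta_d^{(i)}\|\le C$ for constants $C_1, C$ depending only on $\Delta$ and $d$, so every eigenvalue lies in $[0,C]$. Let $\nu_i := \frac{1}{|V(K_i)|}\sum_{\lambda}\delta_{\lambda}$ be the eigenvalue-counting measure, with the convention that $\nu_i(\{0\}) = b_d(K_i)/|V(K_i)|$. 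For any polynomial $p\in\R[t]$ I would write
\[
\int p\,d\nu_i \;=\; \frac{1}{|V(K_i)|}\tr\, p(\Delta_d^{(i)}) \;=\; \int F_p\,d\mu_{K_i},
\]
where $F_p(K,v) := \sum_{\sigma\ni v}\frac{1}{d+1}\,p(\Delta_d^{(K)})_{\sigma\sigma}$ sums over the $d$-simplices $\sigma$ of $K$ containing $v$. Since the diagonal entry $p(\Delta_d^{(K)})_{\sigma\sigma}$ depends only on the ball around $\sigma$ of radius $O(\deg p)$ in the $1$-skeleton, $F_p$ is bounded and locally constant on $\RSC(\Delta)$, hence continuous. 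BS-convergence therefore gives convergence of $\int p\,d\nu_i$ for every polynomial $p$, and Weierstrass approximation upgrades this to weak$^*$ convergence $\nu_i\to\nu_\infty$ of finite measures on $[0,C]$.

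To control the atom at $0$, observe that $\Delta_d^{(i)}$ has integer matrix entries, so its characteristic polynomial has integer coefficients and the product of its nonzero eigenvalues is a nonzero integer and hence at least $1$. Combined with $\|\Delta_d^{(i)}\|\le C$, this yields $\sum_{0<\lambda<1}|\log\lambda|\le n_i\log C$, and bounding each summand from below by $|\log\epsilon|$ gives the uniform estimate
\[
\nu_i\bigl((0,\epsilon)\bigr)\ \le\ \frac{C_1\log C}{|\log\epsilon|}\qquad (0<\epsilon<1).
\]
Finally, I would sandwich $\chi_{\{0\}}$ between a continuous bump $h_\epsilon:[0,C]\to[0,1]$ with $h_\epsilon(0)=1$ and $\supp h_\epsilon\subset[0,\epsilon]$ and the function $h_\epsilon - \chi_{(0,\epsilon]}$, which yields
\[
\int h_\epsilon\,d\nu_i \;-\; \nu_i\bigl((0,\epsilon]\bigr)\ \le\ \nu_i(\{0\})\ \le\ \int h_\epsilon\,d\nu_i.
\]
Letting $i\to\infty$ (which converges the bump integrals against $\nu_\infty$) and then $\epsilon\to 0$ (which kills the $(0,\epsilon]$-error by the L\"uck estimate and squeezes $\int h_\epsilon\,d\nu_\infty$ down to $\nu_\infty(\{0\})$) forces both $\liminf$ and $\limsup$ of $\nu_i(\{0\})$ to equal $\nu_\infty(\{0\})$, establishing the desired limit.
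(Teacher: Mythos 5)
Your proof is correct, and it is essentially the argument behind the result the paper simply cites (Elek's Lemma 6.1 in \cite{El10}); the paper itself gives no proof, only a reference. The structure you use — (i) rewrite $\tfrac{1}{|V(K_i)|}\tr\,p(\Delta_d^{(i)})$ as an integral of a bounded locally‑constant observable $F_p$ over $\RSC(\Delta)$ so that BS‑convergence yields convergence of all polynomial moments of the normalized spectral measure, and (ii) upgrade weak$^*$ convergence to convergence of the atom at $0$ via the L\"uck‑type estimate $\nu_i((0,\epsilon))\le C_1\log C/|\log\epsilon|$ coming from integrality of the Laplacian — is precisely the L\"uck‑approximation template adapted to Benjamini–Schramm limits, which is what Elek does. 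All the individual steps check out: the $\tfrac{1}{d+1}$ factor correctly accounts for each $d$‑simplex being counted once per vertex; $F_p$ is genuinely locally constant on $\RSC(\Delta)$ because $p(\Delta_d)_{\sigma\sigma}$ depends only on an $O(\deg p)$‑ball around $\sigma$ and the degree bound makes $\Delta_d$ a bounded $\ell^2$‑operator even on infinite limit complexes; the product of nonzero eigenvalues is the lowest nonzero coefficient of an integer characteristic polynomial and hence $\ge 1$; and the two‑sided sandwich by $h_\epsilon$ together with the uniform small‑eigenvalue bound forces $\nu_i(\{0\})\to\nu_\infty(\{0\})$. One cosmetic point: you should take $C\ge 2$ (say) so that $\log C>0$ in the estimate, but this costs nothing since $C$ is only required to be an upper bound for $\|\Delta_d^{(i)}\|$.
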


\begin{proof}This is \cite[Lemma 6.1]{El10}.\end{proof}


The next lemma is a generalization of the above to convex sums of finite connected simplicial complexes.

\begin{lem}\label{lem:Elek2}
Let $\{\eta_i\}_{i=1}^\infty \in \cM_1(\RSC(\Delta))$ be a convergent sequence in the weak* topology. In addition, assume that for each $i$ there exist finite connected simplicial complexes $K_{i,1}, \ldots, K_{i,m_i}$ and positive real numbers $t_{i,1},\ldots, t_{i,m_i}$ such that
$$\eta_i = \sum_{j=1}^{m_i} t_{i,j} \mu_{K_{i,j}}.$$
Suppose as well that there exist natural numbers $N_i$ such that $|V(K_{i,j})| \ge N_i$ for all $i,j$ and $\lim_{i\to\infty} N_i = +\infty$. Then for any $d\ge 1$,
$$\lim_{i\to\infty} \frac{ \sum_{j=1}^{m_i} t_{i,j} b_d(K_{i,j}) }{ \sum_{j=1}^{m_i} t_{i,j} |V(K_{i,j})| }$$
exists.
\end{lem}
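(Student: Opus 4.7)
My plan is to reduce the claim to the already-established Lemma~\ref{lem:elek-simplicial} by encoding the convex combination as the empirical measure of a single finite connected simplicial complex of bounded degree. The strategy has three parts: build an appropriate complex with the right Betti quotient, make it connected without disturbing the estimates, and finally extract the needed weak* convergence.

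For each $i$, I would approximate each weight $t_{i,j}$ by a rational $p_{i,j}/q_i$ with common denominator $q_i$ chosen very large, and let $\tilde K_i$ denote the disjoint union, over $j$, of $p_{i,j}$ copies of $K_{i,j}$. Because $b_d$ is additive under disjoint union for $d \geq 1$,
\begin{equation*}
\frac{b_d(\tilde K_i)}{|V(\tilde K_i)|} \;=\; \frac{\sum_j p_{i,j} b_d(K_{i,j})}{\sum_j p_{i,j} |V(K_{i,j})|},
\end{equation*}
which, as $q_i \to \infty$, tends to the target ratio $r_i := \frac{\sum_j t_{i,j} b_d(K_{i,j})}{\sum_j t_{i,j} |V(K_{i,j})|}$. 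To satisfy the connectedness hypothesis of Lemma~\ref{lem:elek-simplicial}, adjoin a spanning tree of auxiliary edges between one distinguished vertex per component; by Mayer--Vietoris this leaves $b_d$ unchanged for $d \geq 1$, and since the total number of components is at most $|V(\tilde K_i)|/N_i$, the adjoined edges touch only an $O(1/N_i) = o(1)$ fraction of vertices, perturbing the empirical measure by $o(1)$ in the weak* topology on $\cM_1(\RSC(\Delta))$.

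A short calculation shows that the empirical measure of the resulting connected complex $\tilde K_i^*$ satisfies
\begin{equation*}
\mu_{\tilde K_i^*} \;\approx\; \sum_j \frac{p_{i,j} |V(K_{i,j})|}{\sum_k p_{i,k}|V(K_{i,k})|} \mu_{K_{i,j}} \;\approx\; \sum_j \frac{t_{i,j}|V(K_{i,j})|}{\sum_k t_{i,k}|V(K_{i,k})|} \mu_{K_{i,j}} \;=:\; \tilde\eta_i,
\end{equation*}
with both approximations made $o(1)$ by choosing $q_i$ diagonally large in $i$. Hence if $\tilde\eta_i$ converges in $\cM_1(\RSC(\Delta))$, so does $\mu_{\tilde K_i^*}$, and Lemma~\ref{lem:elek-simplicial} applied to the bounded-degree sequence $\{\tilde K_i^*\}$ gives convergence of $b_d(\tilde K_i^*)/|V(\tilde K_i^*)|$, and therefore of $r_i$.

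The hard part is then to secure convergence of the reweighted measure $\tilde\eta_i$, which is not formally guaranteed by the assumed convergence of $\eta_i$ since $\tilde\eta_i$ weights each component $\mu_{K_{i,j}}$ by the additional factor $|V(K_{i,j})|$. The plan to address this is a compactness-plus-uniqueness argument: by compactness of $\cM_1(\RSC(\Delta))$, every subsequence of $\tilde\eta_i$ has a further convergent subsequence, along which $r_i$ converges by the reduction above. Using Lemma~\ref{lem:elek-simplicial} together with the disjoint-union formula (which shows the continuous extension $B_d$ of $\mu_K \mapsto b_d(K)/|V(K)|$ is affine on weak* limits), one then argues that the limiting value of $r_i$ is independent of the chosen subsequence, so the whole sequence converges. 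This uniqueness step, which leverages the hypothesis $N_i \to \infty$ to make the reweighting behave, is the delicate piece.
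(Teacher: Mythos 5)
Your overall plan reproduces the paper's own proof of this lemma: approximate the $t_{i,j}$ by rationals with a common denominator, form a disjoint union of the appropriate number of copies of each $K_{i,j}$, glue the pieces into a single connected complex by a tree of auxiliary edges (the paper uses a path through one marked vertex per copy and calls the result $L'_i$), observe that $b_d$ and $|V|$ transform so that $b_d(L'_i)/|V(L'_i)|$ equals the target ratio, and invoke Lemma~\ref{lem:elek-simplicial} once the sequence is shown to be BS-convergent. The only cosmetic difference is path versus general spanning tree; you should be a little careful to use a path (or some other bounded-degree tree), since otherwise the marked vertices can acquire unbounded degree and you leave $\RSC(\Delta')$.

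Where you part ways with the paper is precisely where you yourself raise a flag, and your instinct is right: the empirical measure of the glued complex is close to the \emph{size-reweighted} measure
$$\tilde\eta_i=\Bigl(\sum_k t_{i,k}|V(K_{i,k})|\Bigr)^{-1}\sum_j t_{i,j}\,|V(K_{i,j})|\,\mu_{K_{i,j}},$$
because the proportion of vertices belonging to copies of $K_{i,j}$ is proportional to $t_{i,j}|V(K_{i,j})|$, not to $t_{i,j}$. The paper's proof does not draw this distinction: it asserts $|\mu_{L'_i}(U_r(A,a))-\eta_i(U_r(A,a))|\le 2|X_i|/|V(L'_i)|$, which silently identifies $\eta_i$ with $\tilde\eta_i$. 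So your bookkeeping at this step is in fact more careful than the paper's.

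Unfortunately the step you single out as ``the delicate piece'' is a genuine gap, and the compactness--uniqueness idea does not close it: convergence of $\eta_i$ does not force convergence of $\tilde\eta_i$, and different subsequential limits of $\tilde\eta_i$ can give different values of the ratio $r_i$. A concrete instance with $d=1$: let $K_{i,1}$ be the ladder graph with $i$ rungs ($|V|=2i$, $b_1=i-1$, max degree $3$), let $K_{i,2}$ be a cycle of length $M_i$ ($|V|=M_i$, $b_1=1$), and take $t_{i,1}=t_{i,2}=\tfrac12$. Both families BS-converge (to the bi-infinite ladder and to $\Z$), so $\eta_i$ converges in $\cM_1(\RSC(3))$ regardless of how $M_i\to\infty$, and $N_i=\min(2i,M_i)\to\infty$. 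But $r_i=i/(2i+M_i)$, which tends to $\tfrac13$ along $M_i=i$ and to $\tfrac14$ along $M_i=2i$; alternating between these choices leaves $\eta_i$ convergent while $r_i$ oscillates, and $\tilde\eta_i$ oscillates between $\tfrac23\mu_{\mathrm{lad}}+\tfrac13\mu_\Z$ and $\tfrac12\mu_{\mathrm{lad}}+\tfrac12\mu_\Z$. The quantity you want is governed by the limit of $\tilde\eta_i$, not of $\eta_i$, and neither your argument nor the paper's gives any control over the former; to make the proof go through one needs the hypothesis (or some mechanism guaranteeing) that the reweighted measures $\tilde\eta_i$ converge.
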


\begin{proof}
By approximating the coefficients $t_{i,j}$ by rational numbers, we see that it suffices to prove the special case in which each $t_{i,j}$ is a rational number, which we now assume. Let $D_i>0$ be a natural number such that $D_i t_{i,j} \in \N$ for all $i,j$.


Let $K_{i,j}^{(1)}, \ldots, K_{i,j}^{(D_i t_{i,j} )}$ be disjoint complexes each of which is isomorphic to $K_{i,j}$. Let $v_{i,j}^k$ be a vertex of $K_{i,j}^{(k)}$. Let $L_i$ be the disjoint union of $K_{i,j}^{(k)}$ over all $1 \le k \le D_i t_{i,j}$ and $1 \le j \le m_i$. Let $L'_i$ be the smallest complex containing $L_i$ such that there is an edge in $L'_i$ from $v_{i,j}^k$ to $v_{i,j}^{k+1}$ for all $1 \le k < D_i t_{i,j}, 1\le j \le m_i$ and an edge from $v_{i,j}^{D_i t_{i,j}}$ to $v_{i,j+1}^1$ for all $1 \le j < m_i$. Then $L'_i$ is a connected complex with vertex degree bound $\Delta+2$. Moreover,
$$b_d(L'_i) = \sum_{j=1}^{m_i} D_i t_{i,j} b_d(K_{i,j}), \quad |V(L'_i)| = \sum_{j=1}^{m_i} D_i t_{i,j} |V(K_{i,j})|$$
which implies 
$$b_d(L'_i)/|V(L'_i)| = \frac{ \sum_{j=1}^{m_i} t_{i,j} b_d(K_{i,j}) }{ \sum_{j=1}^{m_i} t_{i,j} |V(K_{i,j})| }.$$
So it suffices to show that $\lim_{i\to\infty} b_d(L'_i)/|V(L'_i)|$ exists. By Lemma \ref{lem:elek-simplicial}, it suffices to show that $\{L'_i\}_{i=1}^\infty$ is BS-convergent.

Let $(A,a)$ be a finite rooted simplicial complex, $r \in \N$ and, as above, let $U_r(A,a) \subset \RSC$ be the set of all $[K,v] \in \RSC$ such that the closed ball of radius $r$ centered at $v$ in $K$ is root-isomorphic to $(A,a)$. It suffices to show that $\mu_{L'_i}(U_r(A,a))$ converges as $i\to\infty$. However, we observe that $|\mu_{L'_i}(U_r(A,a)) - \eta_i(U_r(A,a))| \le 2|X_i|/|V(L'_i)|$ where $X_i \subset V(L'_i)$ is the set of vertices at distance $\le r$ from the set $\{v_{i,j}^k\}_{j,k} \subset V(L'_i)$. Since $\{\eta_i\}_{i=1}^\infty$ is convergent by hypothesis, it suffices to show that $\lim_{i\to\infty} |X_i|/|V(L'_i)| = 0$.

Because the vertex degrees of $L'_i$ are bounded by $\Delta+2$, it follows that 
$$|X_i| \le (\Delta+2)^r |\{v_{i,j}^k\}_{j,k}| \le (\Delta+2)^r \sum_{j=1}^{m_i} D_i t_{i,j}.$$
On the other hand, 
$$|V(L'_i)| = \sum_{j=1}^{m_i} D_i t_{i,j} |V(K_{i,j})| \ge N_i  \sum_{j=1}^{m_i} D_i t_{i,j}.$$
So 
$$\frac{ |X_i| }{|V(L'_i)|} \le (\Delta+2)^r/N_i$$
which implies $\lim_{i\to\infty} |X_i|/|V(L'_i)| = 0$ as required.
\end{proof}

\section{Metric measure spaces}\label{sec:mm}
In \S \ref{sec:Elek} we generalize Elek's Theorem (Lemma \ref{lem:elek-simplicial} above) by replacing the space of rooted simplicial complexes with the space of pointed metric measure spaces.  In this section, we present the basic definitions and results we will need. The standard reference for this subject is \cite{Gr99}. Our definition of mm$^n$-spaces, given below, and the topology on $\M^n$ appears to be non-standard (at least we did not find it in the literature). We should also mention that Benjamini-Schramm convergence of random length spaces first appeared in \cite{AB12}. Our notion is similar, although not exactly the same.

\begin{defn}
An {\em mm-space} (or metric measure space) is a triple $(M,\dist_M,\vol_M)$ where $(M,\dist_M)$ is a complete separable proper metric space and $\vol_M$ is a Radon measure on $M$. We will usually denote such a space by $M$ leaving $\dist_M$ and $\vol_M$ implicit.  A {\em pointed metric measure space} is a quadruple $(M,p,\dist_M,\vol_M)$ where $(M,\dist_M,\vol_M)$ is an mm-space and $p\in M$. More generally, a {\em pointed mm$^n$-space} is an $(n+3)$-tuple $(M,p,\dist_M,\vol^{(1)}_M, \ldots, \vol^{(n)}_M)$ where $(M,\dist_M)$ is a complete separable proper metric space, $p\in M$ and $\vol^{(i)}_M$ is a Radon measure on $M$ for every $i$. We will often denote a pointed mm$^n$-space by $(M,p)$ leaving the rest of the data implicit. Two pointed mm$^n$-spaces $(M,p), (M',p')$ are {\em isomorphic} if there is an isometry from $M$ to $M'$ that takes $p$ to $p'$ and $\vol_M^{(i)}$ to $\vol_{M'}^{(i)}$ for $i=1\ldots n$. We let $[M,p]$ denote the isomorphism class of $(M,p)$. Let $\M^n$ denote the set of all isomorphism classes of pointed mm$^n$-spaces. Let $\M=\M^1$. 
\end{defn}

\begin{defn}[A topology on $\M^n$]
We define a topology on $\M^n$ by declaring that a sequence $\{[M_i,p_i]\}_{i=1}^\infty$ converges to $[M_\infty,p_\infty]$ in $\M^n$ if and only if there exist a complete proper separable metric space $Z$ and isometric embeddings $\varphi_i:M_i \to Z$ such that
\begin{itemize}
\item $$\lim_{i \to \infty} (\varphi_i( M_i), \varphi_i(p_i)) = (\varphi_\infty(M_\infty), \varphi_\infty(p_\infty)) $$
in the pointed Hausdorff topology (see Definition \ref{defn:pointed-Hausdorff} in the appendix for the definition of this topology);
\item $\lim_{i\to\infty} (\varphi_i)_*\vol^{(k)}_{M_i} = (\varphi_\infty)_*\vol^{(k)}_{M_\infty}$ (in the weak* topology on $\cM(Z)$) for all $k$.
\end{itemize}
\end{defn}

\begin{thm}\label{thm:Mn}
With the topology above, $\M^n$ is separable and metrizable.
\end{thm}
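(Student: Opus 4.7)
The plan is to construct an explicit compatible metric on $\M^n$ in the spirit of the Gromov--Prohorov--Hausdorff construction and then exhibit a countable dense subset. For each $R \in \N$ I introduce a scale-$R$ comparison
$$d^R([M,p],[M',p']) := \inf_{Z,\varphi,\varphi'} \biggl\{ d_Z(\varphi(p),\varphi'(p')) + d^Z_H(\varphi(B_R(p)),\varphi'(B_R(p'))) + \sum_{k=1}^n d^Z_{LP}(\varphi_*\vol_M^{(k)}|_{B_R(p)},\varphi'_*\vol_{M'}^{(k)}|_{B_R(p')}) \biggr\},$$
the infimum being over complete separable metric spaces $Z$ and isometric embeddings $\varphi:B_R(p)\to Z$, $\varphi':B_R(p')\to Z$, where $d^Z_H$ and $d^Z_{LP}$ denote Hausdorff and L\'evy--Prohorov distance in $Z$. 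I would then set $d_{\M^n}([M,p],[M',p']) := \sum_{R=1}^\infty 2^{-R}\min(1,d^R([M,p],[M',p']))$ and argue this metrizes the declared topology.

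Next I would verify this is a pseudometric generating the given convergence. Symmetry is immediate; the triangle inequality for each $d^R$ (hence for $d_{\M^n}$) would follow from the standard amalgamation trick: given near-optimal witness embeddings of $B_R(p_1), B_R(p_2)$ into $Z_{12}$ and of $B_R(p_2), B_R(p_3)$ into $Z_{23}$, one glues them along $B_R(p_2)$ via $d(z_1,z_3) = \inf_{x \in B_R(p_2)} d_{Z_{12}}(z_1,x) + d_{Z_{23}}(x,z_3)$, and Hausdorff and L\'evy--Prohorov distances are subadditive under such compositions. Equivalence with convergence in the sense of the definition is easy in one direction: a single witness $(Z,\varphi_i)$ uniformly controls every $d^R$. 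Conversely, if $d_{\M^n}([M_i,p_i],[M_\infty,p_\infty]) \to 0$ then $d^R \to 0$ at each $R \in \N$, and diagonally extracting and gluing witness ambient spaces along the nested balls $B_R(p_\infty)$ would assemble a single complete separable proper space $Z$ with isometric embeddings realizing simultaneously the pointed Hausdorff limit and the weak-$*$ limits of all $n$ pushforward measures demanded in the definition.

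The step I expect to be the main obstacle is non-degeneracy: showing $d_{\M^n}([M,p],[M',p']) = 0$ implies $[M,p]=[M',p']$. From $d^R = 0$ for every $R$ one extracts $\epsilon_j$-correspondences between $B_R(p)$ and $B_R(p')$ with $\epsilon_j \to 0$ preserving the basepoints. Properness of $M$ and $M'$ makes closed balls compact, so an Arzel\`a--Ascoli-type diagonal extraction over a countable dense subset of $B_R(p)$ supplies a subsequential pointwise limit isometry $f_R : B_R(p) \to B_R(p')$ with $f_R(p)=p'$. Simultaneously, vanishing of the L\'evy--Prohorov terms together with uniform convergence on compacta of the correspondences forces $(f_R)_* \vol_M^{(k)}|_{B_R(p)} = \vol_{M'}^{(k)}|_{B_R(p')}$ for each $k$. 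A further diagonal extraction over $R$ selects a family $\{f_R\}$ that is compatible on overlapping balls, assembling to a global isometry $f:M \to M'$ sending $p$ to $p'$ and pushing each $\vol_M^{(k)}$ to $\vol_{M'}^{(k)}$. This compactness-based uniqueness argument is exactly where properness is used, and it is the technically most delicate step.

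Finally, for separability I would let $\sD$ denote the (manifestly countable) set of isomorphism classes of pointed finite metric spaces with rational pairwise distances equipped with $n$-tuples of rational atomic measures supported on the vertices. Given any $[M,p] \in \M^n$ and $R \in \N$, compactness of $B_R(p)$ provides a finite $1/R$-net $F_R \subset B_R(p)$ containing $p$; rationally perturbing the pairwise distances on $F_R$ introduces error at most $1/R$, and pushing each $\vol_M^{(k)}|_{B_R(p)}$ onto $F_R$ by a Voronoi partition with rational weights introduces L\'evy--Prohorov error $O(1/R)$. The resulting $[F_R,p] \in \sD$ then satisfies $d^{R'}([F_R,p],[M,p]) \le C/R$ for all $R' \le R$, while the tail of the series past $R$ contributes at most $2^{-R}$. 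Hence $d_{\M^n}([F_R,p],[M,p]) \to 0$ as $R \to \infty$, so $\sD$ is dense and $\M^n$ is separable.
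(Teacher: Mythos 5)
Your overall strategy mirrors the paper's: build a Gromov--Hausdorff--Prohorov--type metric from scale-$R$ comparisons and approximate by finite rational configurations for separability. The paper does this via the notion of $(\epsilon,R)$-related pointed mm$^n$-spaces (Definition~\ref{defn:Mn-top}), with the metric $\rho = \inf\{\epsilon + 1/(R+2\epsilon)\}$, plus the separability argument you also describe. So the blueprint is the same.

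However, there is a genuine gap, and it is not where you flagged it. You say ``equivalence with convergence \ldots is easy in one direction: a single witness $(Z,\varphi_i)$ uniformly controls every $d^R$.'' This is false as stated, because your $d^R$ compares the \emph{hard restrictions} $\varphi_{i*}\vol^{(k)}_{M_i}\big|_{B_R(p_i)}$ in L\'evy--Prohorov distance, and the restriction operation $\mu \mapsto \mu|_{B_R(p)}$ is not weak$^*$-continuous when the sphere of radius $R$ can carry mass. Concretely, take $n=1$, $M_i = M_\infty = \R$, $p_i = p_\infty = 0$, $\vol_{M_i} = \delta_{R+1/i}$ and $\vol_{M_\infty} = \delta_R$. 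With $Z=\R$ and the identity embeddings, $\delta_{R+1/i} \to \delta_R$ weakly, so $[M_i,p_i]\to[M_\infty,p_\infty]$ in the declared topology. But $\vol_{M_i}|_{B_R(0)} = 0$ while $\vol_{M_\infty}|_{B_R(0)} = \delta_R$, so $d^Z_{LP}$ of the restrictions stays $\ge 1$ for all $i$, hence $d^R$ does not vanish and $d_{\M^n}([M_i,p_i],[M_\infty,p_\infty]) \ge 2^{-R}$. Your candidate metric therefore declares a coarser topology than the one you are supposed to metrize. The paper's Definition~\ref{defn:pointed-Hausdorff} is specifically engineered to avoid this: the comparison $\mu_1(F) < \mu_2(N^o_Z(F,\epsilon)) + \epsilon$ lets the $\epsilon$-thickening $N^o_Z(F,\epsilon)$ spill \emph{outside} $B(p,R)$, so mass just outside the ball of radius $R$ can absorb mass just inside, and Lemma~\ref{lem:key-Mn} proves this gives exactly weak$^*$ convergence. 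You would need to either adopt that comparison, replace the sharp indicator of $B_R(p)$ with a continuous cutoff, or integrate $d^R$ over $R$ against Lebesgue measure (so that the bad radii form a null set for any fixed pair). As written, the proof does not establish that $d_{\M^n}$ metrizes the topology; the rest of the argument (triangle inequality by amalgamation, non-degeneracy via Arzel\`a--Ascoli, rational dense set) is sound once the metric is fixed.

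A minor secondary point: when you ``rationally perturb the pairwise distances on $F_R$'' you need to preserve the triangle inequality; an additive perturbation need not do so, but scaling by a rational $1+\delta$ and then rounding with a controlled error does. The paper's separability argument sidesteps this by choosing finite subsets with the induced (already valid) metric and approximating the measures, and only then passing to rational data, relying on an ``exercise'' for the last step.
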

The proof of this theorem is in the appendix. 

\begin{defn}\label{defn:mu_M}
Every non-null finite volume mm-space $M$ is associated with a measure $\mu_M  \in \cM_1(\M)$ obtained by pushing forward the probability measure $\frac{\vol_M}{\vol_M(M)}$ under the map from $M$ to $\M$ given by $p \mapsto [M,p]$. A sequence $\{M_i\}_{i=1}^\infty$ of non-null finite volume mm-space {\em Benjamini-Schramm converges} if $\{\mu_{M_i}\}_{i=1}^\infty$ converges in the weak* topology on $\cM_1(\M)$.
\end{defn}


\section{A variant of Elek's Theorem}\label{sec:Elek}

The purpose of this section is to prove a version of Lemma \ref{lem:elek-simplicial} for metric measure spaces. We first need some definitions to state the result properly.

\begin{defn}[Special metric measure spaces]\label{defn:special}
Let $M$ be an mm-space. We say $M$ is {\em special} if 
\begin{itemize}
\item $\vol_M$ is non-atomic (i.e. $\vol_M(\{x\})=0$ for every $x\in M$);
\item $\vol_M$ is fully-supported (i.e., $\vol_M(O)>0$ for every nonempty open set $O \subset M$);
\item spheres have measure zero (i.e., for all $p \in M, \epsilon>0$, $\vol_M( \{q \in M:~ \dist_M(p,q) = \epsilon\})=0$;
\item $M$ is pathwise connected.
\end{itemize}
Let $\M_{sp} \subset \M$ denote the subspace of isomorphism classes of pointed special mm-spaces. 
\end{defn}

\begin{defn}
Let $M$ be a metric space. We say that $m$ is a {\em midpoint} of $x,y$ (for $m,x,y \in M$) if $\dist_M(x,m)=\dist_M(m,y)=(1/2)\dist_M(x,y)$. We say a subset $X \subset M$ is {\em strongly convex} if every pair $x,y \in X$ has a unique midpoint $m\in X$. 
\end{defn}

\begin{defn}
Let $M$ be a metric space and $\epsilon>0$. A set $S \subset M$ is {\em $\epsilon$-separated} if $\dist_M(s,s')>\epsilon$ for every $s,s' \in S$ with $s \ne s'$. If $Q \subset M$ then $S$ {\em $\epsilon$-covers} $Q$ if for every $q\in Q$ there is an $s\in S$ such that $\dist_M(q,s)<\epsilon$.
\end{defn}

\begin{defn}
Given a metric space $M$, $p \in M$ and $R>0$, let $B_M(p,R)$ denote the closed ball of radius $R$ centered at $p$. Let $B_M^o(p,R)$ denote the open ball of radius $R$ centered at $p$.
\end{defn}
The main result of this section is:

\begin{thm}\label{thm:Elek}
Let $\{M_i\}_{i=1}^\infty$ be a sequence of finite-volume special mm-spaces. Suppose $\lim_{i\to \infty} \mu_{M_i} = \mu_\infty \in \cM_1(\M_{sp})$ exists. We assume there are constants $\epsilon, v_0,v_1$ such that for every $p\in M_i$ (and every $i=1,2,\ldots$)
\begin{itemize}
\item $v_1>\vol_{M_i}(B_{M_i}^o(p,20\epsilon)) \ge \vol_{M_i}(B^o_{M_i}(p,\epsilon/2)) > v_0>0$,
\item $B^o_{M_i}(p, r )$ is strongly convex for every $r \le 10\epsilon$;
\end{itemize}
Then $\lim_{i\to\infty} \frac{ b_d(M_i)}{ \vol(M_i)}$ exists for every $d \ge 1$ where $b_d(M_i)$ denotes the $d$-th ordinary Betti number of $M_i$.
\end{thm}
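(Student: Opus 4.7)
The plan is to attach to each $M_i$ a bounded-degree rooted simplicial complex $\cN_i$ with $b_d(\cN_i)=b_d(M_i)$ and $|V(\cN_i)|$ comparable to $\vol_{M_i}(M_i)$, and then deduce the theorem from Lemmas \ref{lem:elek-simplicial} and \ref{lem:Elek2}. For each $i$ I would select a maximal $\epsilon$-separated subset $S_i\subset M_i$ and let $\cN_i$ be the nerve of the cover of $M_i$ by the open balls $\{B^o_{M_i}(s,2\epsilon):s\in S_i\}$. The strong-convexity hypothesis at radii $\le 10\epsilon$ forces every nonempty finite intersection of cover sets to be strongly convex: if $B^o(s_1,2\epsilon),\ldots,B^o(s_k,2\epsilon)$ share a point then $d(s_1,s_j)<4\epsilon$ for each $j$, so all these balls lie inside the strongly convex ball $B^o_{M_i}(s_1,6\epsilon)$, in which midpoints are unique, forcing agreement of the midpoint operations. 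Any nonempty strongly convex subset of a proper complete metric space is contractible via an iterated-midpoint homotopy to a fixed basepoint, so the cover is good and the Nerve Lemma yields $\cN_i\simeq M_i$; in particular $b_d(\cN_i)=b_d(M_i)$.

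Next I would record the quantitative consequences of the volume hypothesis. Disjointness of the balls $B^o_{M_i}(s,\epsilon/2)$ together with $v_0<\vol_{M_i}(B^o(s,\epsilon/2))$ and $\vol_{M_i}(B^o(s,20\epsilon))<v_1$ yields $v_0|S_i|\le\vol_{M_i}(M_i)\le v_1|S_i|$. Packing disjoint $\epsilon/2$-balls inside any ball of radius $5\epsilon$ (of volume $<v_1$) shows that at most $v_1/v_0$ points of $S_i$ lie within distance $4\epsilon$ of any fixed vertex, so $\cN_i$ has vertex degree uniformly bounded by some $\Delta=\Delta(v_0,v_1)$.

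The main obstacle is to deduce from the hypothesis $\mu_{M_i}\to\mu_\infty$ that $b_d(M_i)/\vol_{M_i}(M_i)$ converges, despite the fact that the nerves depend on the arbitrary choice of net. The plan is to average over random nets via Lemma \ref{lem:Elek2}: for each $i$, sample $\epsilon$-separated sets $S_i^{(1)},\ldots,S_i^{(N_i)}$ in $M_i$ by a canonical randomized procedure (for instance, greedy $\epsilon$-thinning of a Poisson process of fixed intensity), let $K_{i,j}$ be the nerve of the cover $\{B^o_{M_i}(s,2\epsilon):s\in S_i^{(j)}\}$, and consider $\eta_i:=\frac{1}{N_i}\sum_j\mu_{K_{i,j}}\in\cM_1(\RSC(\Delta))$. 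The combinatorial $r$-ball of $K_{i,j}$ around a typical vertex is determined by the local mm-structure of $M_i$ together with the local randomization, so the hypothesis $\mu_{M_i}\to\mu_\infty$ forces $\eta_i$ to converge in $\cM_1(\RSC(\Delta))$. Lemma \ref{lem:Elek2} then gives convergence of $\sum_j b_d(K_{i,j})/\sum_j|V(K_{i,j})|$; but by the Nerve Lemma $b_d(K_{i,j})=b_d(M_i)$ for every $j$, so this ratio equals $b_d(M_i)/\bar n_i$ where $\bar n_i:=\frac{1}{N_i}\sum_j|S_i^{(j)}|$. Finally, $\bar n_i/\vol_{M_i}(M_i)$ is an asymptotically local functional of $\vol_{M_i}$ and hence converges to a positive limit by the BS-convergence of $\{M_i\}$; this yields convergence of $b_d(M_i)/\vol_{M_i}(M_i)$.
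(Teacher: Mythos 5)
Your blueprint --- replace $M_i$ by the nerve of a cover by small strongly-convex balls around a random $\epsilon$-separated net, then invoke Lemma \ref{lem:Elek2} --- is exactly the paper's strategy, and the pieces you get right (bounded degree $\Delta=v_1/v_0$, strong convexity of nonempty intersections via a containing strongly convex ball of radius $\le 10\epsilon$, contractibility via Rolfsen, comparability $v_0|S_i|\le \vol(M_i)\le v_1|S_i|$) all appear in the paper. But you gloss over two places where the paper does essential technical work, and at least one of these is a genuine gap.

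\textbf{Coverage.} You propose to take $S_i$ to be a ``greedy $\epsilon$-thinning of a Poisson process of fixed intensity.'' A Poisson process of fixed finite intensity misses a fixed $\epsilon$-ball with probability $e^{-\vol(B)}>0$, and in a large $M_i$ some $\epsilon$-ball will a.s.\ be missed; the thinned set then fails to be $2\epsilon$-covering, the balls $\{B^o(s,2\epsilon):s\in S_i\}$ do not cover $M_i$, the Nerve Lemma no longer applies, and $b_d(K_{i,j})\ne b_d(M_i)$. This breaks the chain of equalities that your argument needs. The paper's Lemma \ref{lem:S} is specifically engineered to avoid this: it superposes countably many independent Poisson processes and peels them in stages so that the resulting set $S^M$ is simultaneously $\epsilon$-separated and $3\epsilon$-covering almost surely, and then uses balls of radius $\rho^S(s)\in[5\epsilon,6\epsilon]$, comfortably covering.

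\textbf{Continuity of the nerve functional.} You assert that $\mu_{M_i}\to\mu_\infty$ ``forces $\eta_i$ to converge'' because the combinatorial ball is a local functional. With deterministic radius $2\epsilon$ the indicator of $B^o(s,2\epsilon)\cap B^o(s',2\epsilon)\ne\emptyset$ is discontinuous at configurations where $\dist(s,s')=4\epsilon$, so the map from $\MS$ to laws on $\RSC$ is not continuous, and weak* convergence of the pointed-net laws does not automatically push forward. The paper handles exactly this by taking the radii $\rho^S$ to be random in $[5\epsilon,6\epsilon]$ and explicitly remarks ``the reason why we choose the radii $\rho^S$ randomly rather than deterministically is to make this claim [Claim 1] true.'' It is plausible that the ``spheres have measure zero'' clause of specialness could be used to show the critical boundary configurations have $\mu_\infty$-probability zero so that Portmanteau applies, but that is an argument you would have to make; as written the claim is unjustified.

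The first gap by itself invalidates the identity $b_d(K_{i,j})=b_d(M_i)$ on which everything rests, so the proposal as stated does not close. The strategy, though, is the same one the paper carries out with Lemma \ref{lem:S} and the randomized-radius device filling in precisely these two holes.
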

The main ideas for the proof of Theorem \ref{thm:Elek} are due to G. Elek \cite{El12}. 


\subsubsection{A brief outline}
First we show how to construct for every rooted special mm-space $(M,p)$ a random discrete subset $S \subset M$ which is $\epsilon$-separated and $3\epsilon$-covering. The main difficulty is showing that this construction can be made to depend continuously on $[M,p]$. Secondly we let $\rho^S: S \to [5\epsilon,6\epsilon]$ be a random map and we consider the nerve complex $K$ of the open covering $B^o_M(s,\rho^S(s))$. To be precise, the vertex set of $K$ is $S$ and a subset $S' \subset S$ spans a simplex in $K$ if $\cap_{s\in S'} B^o_M(s,\rho^S(s)) \ne \emptyset$. Considering the case $M=M_i$ with $M_i$ as in Theorem \ref{thm:Elek}, we see that its random complex $K_i$ has degree bound $\Delta$. Moreover we show that $\{K_i\}_{i=1}^\infty$ is Benjamini-Schramm convergent and $K_i$ is homotopic to $M_i$ (this uses a variant of Borsuk's Nerve Theorem). So we can use Lemma \ref{lem:elek-simplicial} to finish the argument.


\subsubsection{Pointed mm-spaces with a weighted discrete set}
We will use the following definitions as technical tools for proving Theorem \ref{thm:Elek}.

\begin{defn}
A {\em pointed mm-space with a weighted discrete set} is a quadruple $(M,p,S,f)$ where $(M,p)$ is a pointed mm-space, $S \subset M$ is a locally finite set and $f:S \to [0,1]$ is a function. By locally finite we mean that $B_M(p,R) \cap S$ is finite for every $R>0$. Two such spaces $(M,p,S,f), (M',p',S',f')$ are {\em isomorphic} if there is an isomorphism from $(M,p)$ to $(M',p')$ which takes $S$ to $S'$ and $f$ to $f'$. Let $\MSF$ denote the set of all isomorphism classes of pointed mm-spaces with a weighted discrete set. We let $[M,p,S,f] \in \MSF$ denote the isomorphism class of $(M,p,S,f)$.
\end{defn}

\begin{defn}[A topology on $\MSF$]\label{defn:MSF}
Given $[M,p,S,f] \in \MSF$, define $\vol_M^{(2)}$ on $M$ to be the counting measure on $S$ and $\vol_M^{(3)}$ on $M$ to be the atomic measure corresponding to $f$. So
$$\vol_M^{(2)}(E) = |E \cap S|, \quad \vol_M^{(3)}(E) = \sum_{s\in E\cap S} f(s)$$
for any $E\subset M$. This defines an embedding of $\MSF$ into $\M^3$. We give $\MSF$ the induced topology.
\end{defn}

\begin{defn}[Pointed mm-spaces with a discrete set]
A {\em pointed mm-space with a discrete set} is a triple $(M,p,S)$ where $(M,p)$ is a pointed mm-space and $S \subset M$ is locally finite. Two such spaces $(M,p,S), (M',p',S')$ are {\em isomorphic} if there is an isomorphism from $(M,p)$ to $(M',p')$ (as elements of $\M$) which maps $S$ bijectively to $S'$. Let $\MS$ denote the set of all pointed mm-spaces with a discrete set up to isomorphism. We let $[M,p,S]\in \MS$ be the isomorphism class of $(M,p,S)$. There is an obvious projection map $\MSF \to \MS$. We endow $\MS$ with the quotient topology. Alternatively, $\MS$ can be embedded into $\M^2$ by $[M,p,S] \mapsto [M,p,\dist_M, \vol_M, \vol^{(2)}_M]$ where $\vol^{(2)}_M$ is the measure $\vol^{(2)}_M(E) = |E \cap S|$.
\end{defn}



\subsubsection{Random discrete subsets of mm-spaces}
The first step in the proof of Theorem \ref{thm:Elek} is to associate  to an mm-space a random discrete subset in a natural way. First we need a few more definitions.

\begin{notation}
Given a random variable $X$, let $\Law(X)$ denote the law of $X$. So $\Law(X)$ is a probability measure on the space of all values of $X$. 
\end{notation}

\begin{defn}
If $(Y,\lambda)$ is a purely non-atomic finite measure space and $k \ge 1$ is an integer then $(Y^k,\lambda^k)$ denotes the direct product of $k$-copies of $(Y,\lambda)$ and $({Y \choose k},{\lambda \choose k})$ denotes the projection of $(Y^k,\lambda^k)$ onto the space of all {\em unordered} subsets of $Y$ of cardinality $k$. Because $\lambda$ is purely non-atomic, this is well-defined: the large diagonal in $Y^k$ has measure zero with respect to $\lambda^k$. A {\em uniformly random subset $S \subset Y$ of cardinality $k$} is a random subset with law equal to ${\lambda \choose k}/|{\lambda \choose k}|$ where $|{\lambda \choose k}|$ denotes the total mass of ${\lambda \choose k}$. 
\end{defn}

\begin{lem}\label{lem:S}
Let $\epsilon>0$. There exists a continuous map $\cF: \M_{sp} \to \cM_1(\MS)$ such that for any $[M,p] \in \M_{sp}$, if $[M',p',S'] \in \MS$ is random with $\Law([M',p',S'])=\cF([M,p])$  then $[M',p']=[M,p]$ and $S'$ is $\epsilon$-separated and $3\epsilon$-covers $M$ almost surely. Moreover, $\cF$ does not depend on the point $p$ in the following sense. If $[M,p], [M,q] \in \M_{sp}$ and $[M,p,S], [M,q,T] \in \MS$ are random with $\Law([M,p,S])=\cF([M,p]), \Law([M,q,T])=\cF([M,q])$ then $\Law(S)=\Law(T)$.
\end{lem}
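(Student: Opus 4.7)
The plan is to build $S$ as a canonical hard-core thinning of a space-time Poisson point process driven by $\vol_M$. Concretely, for each $[M,p]\in\M_{sp}$, let $\Pi$ be a Poisson point process on $M\times[0,\infty)$ with intensity measure $\vol_M\otimes\Leb$; think of the second coordinate $t(s)$ of $(s,t(s))\in\Pi$ as a \emph{mark} or \emph{arrival time}. Because $\vol_M$ is fully supported and non-atomic, $\Pi$ is a.s.\ locally finite in $M\times[0,\infty)$, marks are a.s.\ distinct, and the projection $\pi(\Pi)\subset M$ is a.s.\ dense in $M$. Define $S\subset\pi(\Pi)$ by the Mat\'ern type-III / greedy rule: process the points of $\Pi$ in increasing order of mark, adding the current point to $S$ iff it lies at distance $\geq\epsilon$ from every previously accepted point. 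Equivalently, $s\in S$ iff there is no $s'\in S$ with $t(s')<t(s)$ and $d(s,s')<\epsilon$. The map we want is $\cF([M,p])=\Law([M,p,S])$.

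The two target properties of $S$ then follow in two lines. For $\epsilon$-\emph{separation}, if $s\neq s'$ lie in $S$ with $d(s,s')<\epsilon$, WLOG $t(s)<t(s')$, contradicting the rule. For $3\epsilon$-\emph{covering}, choose a countable dense $D\subset M$ (possible since $M$ is separable). For each fixed $q\in D$, the set $\Pi\cap(B_M(q,\epsilon)\times[0,\infty))$ is a.s.\ nonempty, so it contains a unique a.s.\ minimum-mark point $s_0$. If $s_0\in S$, then $d(s_0,q)<\epsilon$; otherwise, the rule guarantees some $s_1\in S$ with $t(s_1)<t(s_0)$ and $d(s_0,s_1)<\epsilon$, hence $d(s_1,q)<2\epsilon$. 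Taking a countable intersection over $q\in D$ and using the $\epsilon$-density of $D$ upgrades ``$2\epsilon$-covers $D$'' to ``$3\epsilon$-covers $M$'' almost surely.

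Continuity of $\cF$ is the substantive content. Given $[M_n,p_n]\to[M_\infty,p_\infty]$ in $\M_{sp}$, choose common isometric embeddings $\varphi_n:M_n\to Z$ as in the definition, so that $\varphi_n(p_n)\to\varphi_\infty(p_\infty)$ and $(\varphi_n)_*\vol_{M_n}\to(\varphi_\infty)_*\vol_{M_\infty}$ weakly. Pushed-forward Poisson processes on $Z\times[0,\infty)$ depend continuously on their intensities, so $(\varphi_n\times\mathrm{id})_*\Pi_n$ converges in distribution to $(\varphi_\infty\times\mathrm{id})_*\Pi_\infty$ as random Radon measures on $Z\times[0,\infty)$. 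The greedy selection rule is a deterministic function of the point configuration that is continuous away from the null set of configurations containing a pair $(s,t),(s',t')$ with $d(s,s')=\epsilon$ or $t=t'$; this null set has probability $0$ under a Poisson law with absolutely continuous intensity. Composing with the Skorokhod representation theorem (or simply observing that a.s.\ continuous maps preserve weak convergence), one obtains $[M_n,p_n,S_n]\to[M_\infty,p_\infty,S_\infty]$ in distribution in $\MS$, i.e., $\cF([M_n,p_n])\to\cF([M_\infty,p_\infty])$ in $\cM_1(\MS)$. Basepoint independence is automatic from the construction: $\Pi$ and $S$ are manufactured purely from $(M,\dist_M,\vol_M)$ and are untouched by replacing $p$ with another point $q\in M$, so the marginal $\Law(S)$ on subsets of $M$ is the same in both cases.

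\emph{The hard part} will be making precise the well-definedness of the greedy recursion when $\Pi$ is infinite and chains of the form $s=s_0,s_1,s_2,\dots$ with $d(s_i,s_{i+1})<\epsilon$ and $t(s_{i+1})<t(s_i)$ could a priori be infinite. The standard way to handle this is to bound the expected size of the ``ancestor tree'' of a point of mark $t_0$ by a sum of the form $\sum_n (V_\epsilon t_0)^n/n!=e^{V_\epsilon t_0}$ (using the Campbell--Mecke formula and the fact that each generation's marks are uniform in the parent's mark range), concluding a.s.\ finiteness. In the spaces relevant to Theorem~\ref{thm:Elek} one has a uniform bound $V_\epsilon=\sup_x\vol_{M_i}(B(x,\epsilon))<\infty$; for general $[M,p]\in\M_{sp}$ one localizes the argument by first restricting the intensity to $M\times[0,T]$, running the greedy in an exhaustive sequence of balls, and taking $T\to\infty$ together with a monotone-limit argument to see that the decision ``$s\in S$'' stabilizes almost surely. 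Once well-definedness is secure, the remaining work is routine verification that the resulting law of $(M,p,S)$ is a measurable function of $[M,p]$ landing in $\cM_1(\MS)$.
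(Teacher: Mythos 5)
Your construction is genuinely different from the paper's, and the difference matters for exactly the two points you flag as ``the substantive content'' and ``the hard part.'' You build $S$ by a Mat\'ern type-III / greedy hard-core thinning of a single space-time Poisson process on $M\times[0,\infty)$, whereas the paper uses an \emph{infinite sequence} of independent Poisson layers $S^M_1, S^M_2,\dots$ of intensity one, applies to each layer a \emph{soft, one-shot} (Mat\'ern type-II flavoured) thinning governed by a continuous cutoff function $\phi$ with random acceptance thresholds $X(s,t)$, and finally takes the union of the survivors $U^M_j$. Your route buys a cleaner derivation of $\epsilon$-separation and $3\epsilon$-covering (both are immediate consequences of the greedy rule plus density), while the paper has to argue covering separately via an asymptotic Borel--Cantelli type argument over layers. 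But the price you pay is exactly what the paper is engineered to avoid.

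First, the well-definedness gap is real and your sketch does not close it. The lemma is stated for arbitrary $[M,p]\in\M_{sp}$, with no uniform upper bound on $\vol_M(B(x,\epsilon))$. Your Campbell--Mecke estimate $\sum_n (V_\epsilon t_0)^n/n!$ genuinely requires a global bound $V_\epsilon<\infty$, and the localization you propose (restrict to $M\times[0,T]$ and exhaust by balls) does not obviously stabilize: the ancestor chain of a point in $B(p,R)$ can leave $B(p,R)$, and in a space like $\H^n$ the volume $\vol_M(B(s_0,n\epsilon))$ controlling generation $n$ grows exponentially in $n$, so a per-generation bound need not sum. By contrast, the paper never runs an infinite recursion: in stage $n$, a point $s$ is removed based \emph{only} on points within distance $2\epsilon$ of $s$ in the finite set $S^M_n\cup\bigcup_{j<n}U^M_j$ (the cutoff $\phi$ vanishes beyond $2\epsilon$). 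Each $U^M_n$ is therefore a well-defined, locally determined thinning, no matter how $\vol_M(B(\cdot,\epsilon))$ behaves.

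Second, continuity of $\cF$ is not as routine as you describe, precisely because the greedy decision at a point is \emph{nonlocal}: whether $(s,t)$ is accepted depends on an ancestor chain of a priori unbounded combinatorial depth and spatial extent, and stability of the final answer under a small perturbation of the configuration is exactly what you would need to prove (it follows from a.s.\ finiteness of the ancestor tree plus being strictly interior to each $\epsilon$-decision, but these are the facts in question, not free facts). Moreover ``a.s.\ continuous functions push forward weak limits'' presupposes the function is defined and measurable off a single null set of the \emph{limiting} law in a way that is uniform across the moving spaces $M_i\hookrightarrow Z$; spelling that out requires essentially the same work as showing the ancestor trees are a.s.\ finite and a.s.\ stable. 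The paper sidesteps this in two ways simultaneously: the thinning decision is local in space (radius $2\epsilon$), so Hausdorff convergence of $(M_i,p_i)$ and of $S^{M_i}_j$ directly gives convergence of the thinning data; and the \emph{soft} acceptance rule $\phi(\dist(s,t))\ge X(s,t)$ with $\phi$ continuous makes the acceptance probability a continuous function of the configuration, so there is no null set of borderline configurations at which the map jumps.

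So: your approach could plausibly be pushed through under an added hypothesis like $\sup_x\vol_M(B(x,\epsilon))<\infty$ (which does hold for the spaces that appear in the application to Theorem~\ref{thm:Elek}), but as a proof of Lemma~\ref{lem:S} as stated it has a gap in well-definedness that propagates directly into the continuity claim, and the ``monotone-limit'' fix you gesture at would itself need a real argument. If you want to salvage the greedy construction, the thing to prove carefully is a.s.\ finiteness of the Mat\'ern III ancestor tree for a general special mm-space; if instead you want the easiest path to the lemma, the paper's layered, soft, local thinning is designed so that neither infinite recursion nor hard-cutoff discontinuity ever arises.
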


\begin{proof}
Fix $(M,p)$ be a pointed special mm-space. For $j \in \N$, let $S^M_j$ be a Poisson point process on $M$ of intensity $1$. To be precise $S^M_j$ is a random subset of $M$ characterized by the properties:
\begin{enumerate}
\item if $Q \subset M$ has finite volume then $S^M_j \cap Q$ is uniformly random with cardinality $\eta_{j,Q}$ where $\eta_{j,Q}$ is a discrete Poisson random variable with parameter $\lambda=\vol_M(Q)$. So $\textrm{Prob}(\eta_{j,Q} = n) = \frac{\vol_M(Q)^n \exp(-\vol_M(Q)) }{n!}$ for $n=0,1,2,\ldots$.
\item If $\{Q_i\}_{i=1}^\infty$ are pairwise disjoint Borel subsets of $M$ of finite volume then the random variables $\{S^M_j \cap Q_i\}_{i=1}^\infty$ are jointly independent. \end{enumerate}
Also let $f^M_{j}: S^M_{j} \to [0,1]$ be a random function with law $\Leb^{S^M_j}$ where $\Leb$ denotes Lebesgue measure on the interval $[0,1]$. We require that $\{S^M_{j}\}_{j=1}^\infty$ and $\{f^M_{j}\}_{i,j=1}^\infty$ are jointly independent.

\noindent {\bf Claim 1}: The map $[M,p] \in \M_{sp} \mapsto \Law([M,p,S^M_j, f^M_j]) \in \cM_1(\MSF)$ is continuous for each $j$. 

\begin{proof}[Proof of Claim 1]
Let $\{ [M_i,p_i]\}_{i=1}^\infty \subset \M_{sp}$ be a sequence with $\lim_{i\to\infty} [M_i,p_i] = [M_\infty,p_\infty] \in \M_{sp}$. So there are a complete separable proper metric space $Z$ and isometric embeddings $\varphi_i:M_i \to Z$ (for $1\le i \le \infty$) such that 
$$\lim_{i\to\infty} \varphi_i(M_i,p_i) = \varphi_\infty(M_\infty,p_\infty), \quad \lim_{i\to\infty} (\varphi_i)_*\vol_{M_i} = (\varphi_\infty)_*\vol_{M_\infty}.$$
The first limit above is in the pointed Hausdorff topology and the second is in the weak* topology. These limits imply that the Poisson point process with intensity one with respect to the measure $(\varphi_i)_*\vol_{M_i}$ converges in law to the Poisson point process with intensity one with respect to the measure $(\varphi_\infty)_*\vol_{M_\infty}$. Similarly, if $f'_{ij}$ is defined on $\varphi_i(S^{M_i}_j)$ by $f'_{ij}(\varphi_i(s))=f_j^{M_i}(s)$ then $\Law( \varphi_i(S^{M_i}_j), f'_{ij})$ converges to $\Law(\varphi_\infty(S^{M_\infty}_j), f'_{\infty j})$ which implies $\Law([M_i,p,S^{M_i}_j, f^{M_i}_j])$ converges to $\Law([M_\infty,p,S^{M_\infty}_j, f^{M_\infty}_j])$ as $i\to\infty$.
\end{proof}

The idea behind the proof is to construct a random subset $S^M \subset \cup_{j\in \N} S^M_j$ such that the map $[M,p] \mapsto \Law([M,p,S^M])$ satisfies the conclusions of the lemma. We build $S^M$ in stages. In the first stage, we identify a random subset $T^M_1 \subset S^M_1$ such that $U^M_1:=S^M_1 \setminus T^M_1$ is $\epsilon$-separated. In the $n$-th stage we identify a random subset $T^M_{n} \subset S^M_{n}$ such that if $U^M_n := S^M_n \setminus T^M_n$ then $\cup_{j < n} U^M_j$ is $\epsilon$-separated. Finally we let $S^M = \cup_{j=1}^\infty U^M_j$. Randomness is used in the construction of each $T^M_j$ in order to ensure continuity of the map $[M,p] \mapsto \Law([M,p,S^M])$. Next we present the details.

Let $\phi: [0, \infty) \to [0,1]$ be a continuous function satisfying:
\begin{itemize}
\item $\phi(t) = 1$ if $t \le \epsilon$
\item $\phi(t)=0$ if $t \ge 2\epsilon$.
\end{itemize}

For each pair $s,t \in \cup_{j=1}^\infty S^M_j$, let $X(s,t) \in [0,1]$ be a random variable with Lebesgue distribution. We require that the $X(s,t)$'s are jointly independent. Let $T^M_1$ consist of every $s\in S^M_1$ such that there is some $t \in S^M_1$ with $f^M_1(s)\le f^M_1(t)$ and $\phi(\dist_M(s,t)) \ge X(s,t)$. Let $U^M_1 = S^M_1 \setminus T^M_1$. Note that $U^M_1$ is $\epsilon$-separated almost surely. 

\noindent {\bf Claim 2}: The map $[M,p] \in \M \mapsto \Law([M,p,U^M_1]) \in \cM_1(\MS)$ is continuous.

\begin{proof}[Proof of Claim 2]
Let $\{ [M_i,p_i,S^{M_i}_1,f^{M_i}_1]\}_{i=1}^\infty \subset \MSF$ be a (deterministic) sequence with $\lim_{i\to\infty} [M_i,p_i,S^{M_i}_1,f^{M_i}_1] = [M_\infty,p_\infty,S^{M_\infty}_1,f^{M_\infty}_1] \in \MSF$ and such that $f^{M_\infty}_1$ is injective. So there are a complete separable metric space $Z$ and isometric embeddings $\varphi_i:M_i \to Z$ such that 
$$\lim_{i\to\infty} \varphi_i(M_i,p_i) = \varphi_\infty(M_\infty,p_\infty), \quad \lim_{i\to\infty} (\varphi_i)_*\vol^{(k)}_{M_i} = (\varphi_\infty)_*\vol^{(k)}_{M_\infty}$$
for each $k=1,2,3$ where $\vol^{(2)}_{M_i}, \vol^{(3)}_{M_i}$ are as in Definition \ref{defn:MSF}. By Claim 1, it suffices to show that $\Law(\varphi_i(T^{M_i}_1))$ converges to $\Law(\varphi_\infty(T^{M_\infty}_1))$. 

Suppose that $x_i \in S^{M_i}_1$ and 
$$\lim_{i\to\infty} \varphi_i(x_i) = \varphi_\infty(x_\infty)$$ 
for some $x_\infty \in S^{M_\infty}_1$. Then $f^{M_i}_1(x_i)$ converges to $f^{M_\infty}_1(x_\infty)$. 

 Let $W(x_i)$ be the set of all $s \in S^{M_i}_1 \cap B_{M_i}(x_i,2\epsilon)$ such that $f^{M_i}_1(x_i) \le f^{M_i}_1(s)$. The probability that $x_i \in T^{M_i}_1$ is the probability that $\phi(\dist_{M_i}(x_i,s)) \ge X(x_i,s)$ for some $s \in W(x_i)$. Note $W(x_i)$ is finite and $\varphi_i(W(x_i))$ converges to $\varphi_\infty(W(x_\infty))$ as $i\to\infty$ in the Hausdorff topology because $f^{M_\infty}_1$ is injective. Also the values of the functions $f^{M_i}_1$ converge in the sense that if $y_i \in W(x_i)$ and $\lim_{i\to\infty} \varphi_i(y_i)=\varphi_\infty(y_\infty)$ then $f^{M_i}_1(y_i)$ converges to $f^{M_\infty}_1(y_\infty)$. Since $\phi$ is continuous, the probability that $x_i \in T^{M_i}_1$ converges to the probability that $x_\infty \in T^{M_\infty}_1$ as $i\to\infty$. Because $\{x_i\}_{i=1}^\infty$ is arbitrary, this implies the claim.
\end{proof}

For $(M,p) \in \M$, we inductively define $T^M_n, U^M_{n}$ (for $n\ge 2$) by: $T^M_{n}$ consists of every $x \in S^M_{n}$ such that there exists $y \in S^M_n \cup \bigcup_{j<n} U^M_j$ with $f^M_n(x)\le f^M_n(y)$ and  $\phi(\dist_M(x,y)) \ge X(x,y)$. Let $U^M_n = S^M_n \setminus T^M_n$. Note $\cup_{j\le n} U^M_j$ is $\epsilon$-separated almost surely.

\noindent {\bf Claim 3}: The map $[M,p] \mapsto \Law([M,p,U^M_n]) \in \cM_1(\MS)$ is continuous for every $n$.

The proof of this is similar to the proof of Claim 2 so we will skip it. Let $S^M= \bigcup_{j=1}^\infty U^M_j$. Note $S^M$ is $\epsilon$-separated almost surely. We claim that $S^M$ $3\epsilon$-covers $M$ if $M$ is special. To see this, let $q \in M$. Let $n>0$ be an integer and consider the event that $\bigcup_{j< n} U^M_j$ has trivial intersection with $B^o_M(q,3\epsilon)$. Conditioned on this event, the probability that $U^M_{n}$ has nontrivial intersection with $B^o_M(q,3\epsilon)$ is bounded below by the probability that $S^M_{n} \cap B^o_M(q,3\epsilon)$ consists of a single point contained in $B_M(q,\epsilon)$. In particular there is a positive lower bound on this probability (depending on $q$) which is independent of $n$. This uses the hypothesis that $\vol_M$ is fully-supported because $M$ is special. By the law of large numbers then, with probability one, $S^M \cap B^o_M(q,3\epsilon) \ne \emptyset$. This proves $S^M$ $3\epsilon$-covers $M$ as claimed. To finish the lemma, define $\cF([M,p]) := \Law([M,p,S^M])$. The continuity of $\cF$ follows from Claim 3.
\end{proof}


 


\begin{proof}[Proof of Theorem \ref{thm:Elek}]

Let $\MS'$ be the set of all $[M,p,S] \in \MS$ such that there is a unique $s\in S$ with $\dist_M(p,s) \le \dist_M(p,s')$ for all $s' \in S$. Given $[M,p,S] \in \MS'$, let $\rho^S:S \to [5\epsilon,6\epsilon]$ be a random function defined by: 
\begin{itemize}
\item for each $t\in S$, $\Law(\rho^S(t))$ is the normalized Lebesgue measure on the interval $[5\epsilon,6\epsilon]$;
\item the family $\{ \rho^S(t):~t\in S\}$ is jointly independent.
\end{itemize}
In other words, the law of $\rho^S$ is the product measure $(\Leb_{[5\epsilon,6\epsilon]})^S$ where  $\Leb_{[5\epsilon,6\epsilon]}$ denotes Lebesgue measure on the interval $[5\epsilon,6\epsilon]$ normalized to have total mass $1$. Let $\Sigma(M,S,\rho^S)$ be the nerve complex of $\{ B^o_{M}(s, \rho^S(s) ):~ s\in S\}$. To be precise, the vertex set of $\Sigma(M,S,\rho^S)$ is $S$ and for every $S' \subset S$ there is a simplex in $\Sigma(M,S,\rho^S)$ spanning $S'$ if and only if $\cap_{s\in S'} B^o_{M}(s,\rho^S(s))  \ne \emptyset$. Let $v \in S$ be the unique element closest to $p$, $\Sigma(M,S,\rho^S)_v$ be the connected component of $\Sigma(M,S,\rho^S)$ containing $v$ and let $\nu_{M,p,S} = \Law(\Sigma(M,S,\rho^S)_v,v)\in \cM(\RSC)$. 

Let $(K,v)$ be a finite rooted simplicial complex, $r>0$ be an integer and $U_r(K,v)$ be the set of all $[K',v'] \in \RSC$ such that the ball of radius $r$ centered at $v'$ is isomorphic to $(K,v)$ as rooted simplicial complexes.  

\noindent {\bf Claim 1}. The map $[M,p,S] \in \MS' \mapsto \nu_{M,p,S}(U_r(K,v))$ is continuous for every $(K,v), r>0$. 

Note: the reason why we choose the radii $\rho^S$ randomly rather than deterministically is to make this claim true.

\begin{proof}[Proof of Claim 1]
Let $W_r(K,v)$ be the union of all sets of the form $U_r(K',v')$ where $[K',v'] \in \RSC$ is such that there is a simplicial embedding $\phi:K \to K'$ which maps $v$ to $v'$ and is bijective on the $0$-skeleton. Using inclusion-exclusion, it is possible to express $\nu_{M,p,S}(U_r(K,v))$ as a finite linear combination of numbers of the form $\nu_{M,p,S}(W_r(K',v'))$. So it suffices to show that the map $(M,p,S) \mapsto \nu_{M,p,S}(W_r(K,v))$ is continuous.

So let $\{[M_i,p_i,S_i]\}_{i=1}^\infty \subset \MS'$ be a sequence with $\lim_{i\to\infty} [M_i,p_i,S_i] =[M_\infty,p_\infty,S_\infty] \in \MS'$. Without loss of generality, we may assume there is a complete proper separable metric space $Z$ containing $M_i$ for $1\le i \le \infty$ such that 
\begin{itemize}
\item $\dist_{M_i}$ is the restriction of $\dist_Z$ to $M_i$ (for all $i$);
\item $(M_i,p_i)$ converges to $(M_\infty,p_\infty)$ in the pointed Hausdorff topology;
\item $(S_i,p_i)$ converges to $(S_\infty,p_\infty)$ in the pointed Hausdorff topology.
\end{itemize}
Let $R=100\epsilon r$. Since each $S_i$ is locally finite, there is an integer $n>0$ and $s_{i,1},\ldots, s_{i,n} \in S_i$ such that 
\begin{itemize} 
\item $\lim_{i\to\infty} s_{i,j} = s_{\infty,j}$ for each $j$, 
\item $B_Z(p_i, R) \cap S_i \subset \{s_{i,1},\ldots, s_{i,n}\}$ for all $i$.
\end{itemize}
Let $E_i$ be the set of all $t=(t_1,\ldots, t_n) \in [5\epsilon, 6\epsilon]^n$ such that if $\rho:S_i \to [5\epsilon,6\epsilon]$ is any function with $\rho(s_{i,j})=t_j$ for all $j$ then $(\Sigma(M_i,S_i,\rho)_{v_i}, v_i) \in W_r(K,v)$ where $v_i \in S_i$ is the unique closest point to $p_i$. By definition, $\nu_{M_i,p_i,S_i}(W_r(K,v)) = \Leb_{[5\epsilon,6\epsilon]}^n(E_i)$.

Note that $E_i$ is open (because the nerve complexes are defined in terms of open sets). Also, the definition of $W_r(K,v)$ implies that $E_i$ has  the following monotone property: if $t \in E_i$ and $t' \in [5\epsilon, 6\epsilon]^n$ satisfies $t'_j \ge t_j$ for all $j$ then $t' \in E_i$. In order to estimate the volume of $E_i$, let $f_i:[5\epsilon,6\epsilon]^{n-1} \to [5\epsilon,6\epsilon]$ be the function $f_i(t_1,\ldots t_{n-1})=t_n$ where $t_n$ is the largest number in $[5\epsilon,6\epsilon]$ such $(t_1,\ldots, t_n) \notin E_i$ if such a number exists. Otherwise, set $f_i(t_1,\ldots t_{n-1})=5\epsilon$. Then the complement of $E_i$ is the region below the graph of $f_i$. So
$$\nu_{M_i,p_i,S_i}(W_r(K,v)) = \Leb_{[5\epsilon,6\epsilon]}^n(E_i) = 1-\int f_i(t_1,\ldots, t_{n-1}) ~d\Leb_{[5\epsilon,6\epsilon]}^{n-1}(t_1,\ldots,t_{n-1}).$$
Because $\lim_{i\to\infty} s_{i,j} = s_{\infty,j}$ for each $j$ and $(M_i,p)$ converges to $(M_\infty,p_\infty)$, it follows that $\{f_i\}_{i=1}^\infty$ converges pointwise to $f_\infty$. The Bounded Convergence Theorem now implies $\nu_{M_i,p_i,S_i}(W_r(K,v))$ converges to  $\nu_{M_\infty,p_\infty,S_\infty}(W_r(K,v))$ as $i\to\infty$. 

\end{proof}


Given a special mm-space $M$, $s\in M$ and $r>0$ let $\kappa(s,r)\ge 0$ be the smallest radius such that  $\vol_{M}(B_{M}(s,\kappa(s,r)))=r$ if such a number exists. Let $\kappa(s,r)=+\infty$ if no such number exists. Let $\MS(r)$ be  the set of all $(M',p',S')\in \MS$ such that $\dist_{M'}(p',s) \le \kappa(s,r)$ for some $s \in S'$. Similarly, let $\MS^o(r)$ be  the set of all $(M',p',S')\in \MS$ such that $\dist_{M'}(p',s) < \kappa(s,r)$ for some $s \in S'$.


Let $M_i$ be as in the statement of Theorem \ref{thm:Elek}, $p_i \in M_i$ be uniformly random, $S_i \subset M_i$ be such that $\Law([M_i,p_i,S_i]) = \cF([M_i,p_i])$ as in Lemma \ref{lem:S} and $\lambda_i=\Law([M_i,p_i,S_i])$ for $1\le i < \infty$. By the hypotheses of Theorem \ref{thm:Elek} and Lemma \ref{lem:S}, $\lambda_i$ converges as $i\to \infty$ to a measure $\lambda_\infty \in \cM_1(\M_{sp})$. Let $[M_\infty,p_\infty,S_\infty]\in \MS$ be random with law $\lambda_\infty$. By hypothesis, $M_\infty$ is a special mm-space almost surely.

\noindent {\bf Claim 2}. 
\begin{enumerate}
\item $\lambda_\infty(\partial \MS(v_0/2))=0$ where $\partial \MS(v_0/2) = \overline{\MS(v_0/2)} \cap \overline{\MS \setminus \MS(v_0/2)}$;
\item $\lim_{i\to\infty} \lambda_i(\MS(v_0/2)) =\lambda_\infty(\MS(v_0/2)) \ge\frac{v_0}{2v_1}$.
\end{enumerate}
\begin{proof}[Proof of Claim 2]
Note that for every $s \in M_i$, 
$$\kappa(s,v_0/2) < \kappa(s,v_0) < \epsilon/2$$
because $\vol_{M_i}(B^o_{M_i}(s,\epsilon/2)) > v_0>0$ and because $M_i$ is special so spheres in $M_i$ have measure zero. Because $\lambda_i$ converges to $\lambda_\infty$ and $\MS(r)$ is closed in $\MS$, the Portmanteau Theorem implies
\begin{eqnarray}\label{eqn:closed}
\limsup_{i\to\infty} \lambda_i(\MS(r)) \le \lambda_\infty(\MS(r))\quad \forall r >0.
\end{eqnarray}
Because $\MS^o(r)$ is open in $\MS$,
\begin{eqnarray}\label{eqn:open}
\liminf_{i\to\infty} \lambda_i(\MS^o(r)) \ge \lambda_\infty(\MS^o(r))\quad \forall r >0.
\end{eqnarray}
Now observe that 
$$\lambda_i(\MS^o(r)) = \lambda_i(\MS(r)) = \frac{|S_i| r}{\vol(M_i)}$$
if $r \le v_0$ because spheres in $M_i$ have measure zero, $\kappa(s,v_0)<\epsilon/2$ and $S_i$ is $\epsilon$-separated. In particular, if $0<r_1,r_2 < v_0$ then
\begin{eqnarray*}
\frac{\lambda_\infty(\MS^o(r_1))}{\lambda_\infty(\MS(r_2))} &\le& \frac{\liminf_{i\to\infty} \lambda_i(\MS^o(r_1))}{  \limsup_{i\to\infty} \lambda_i(\MS(r_2))} \le \liminf_{i\to\infty} \frac{\lambda_i(\MS^o(r_1))}{  \lambda_i(\MS(r_2))} = \frac{r_1}{r_2}.
\end{eqnarray*}
Similarly,
\begin{eqnarray*}
\frac{\lambda_\infty(\MS(r_1))}{\lambda_\infty(\MS^o(r_2))} &\ge& \frac{\limsup_{i\to\infty} \lambda_i(\MS(r_1))}{  \liminf_{i\to\infty} \lambda_i(\MS^o(r_2))} \ge \limsup_{i\to\infty} \frac{ \lambda_i(\MS(r_1))}{  \lambda_i(\MS^o(r_2))}= \frac{r_1}{r_2}.
\end{eqnarray*}
So for any sufficiently small $\delta>0$,
\begin{eqnarray*}
\frac{r_1-\delta}{r_2+\delta} &\le& \frac{\lambda_\infty(\MS(r_1-\delta))}{\lambda_\infty(\MS^o(r_2+\delta))} \le \frac{\lambda_\infty(\MS^o(r_1))}{\lambda_\infty(\MS(r_2))}\\ &\le& \frac{\lambda_\infty(\MS(r_1))}{\lambda_\infty(\MS^o(r_2))} \le \frac{\lambda_\infty(\MS^o(r_1+\delta))}{\lambda_\infty(\MS(r_2-\delta))}\le \frac{r_1+\delta}{r_2-\delta}.
\end{eqnarray*}
By sending $\delta \searrow 0$ we see that
$$\frac{r_1}{r_2} = \frac{\lambda_\infty(\MS(r_1))}{\lambda_\infty(\MS^o(r_2))} = \frac{\lambda_\infty(\MS^o(r_1))}{\lambda_\infty(\MS(r_2))}.$$
In particular, $\lambda_\infty(\MS(v_0/2)) = \lambda_\infty(\MS^o(v_0/2))$ which implies $\lambda_\infty(\partial \MS(v_0/2))=0$. By (\ref{eqn:closed}, \ref{eqn:open})
$$\lim_{i\to\infty} \lambda_i(\MS(v_0/2)) =\lambda_\infty(\MS(v_0/2)).$$
Because $B_{M_i}(q,3\epsilon)<v_1$ (for any $q \in M_i$) and $S_i$ is $3\epsilon$-covering, it follows that
$$v_1|S_i|\ge  \vol_{M_i}(M_i).$$
Because $\kappa(s,v_0/2)\le \epsilon/2$ and $S_i$ is $\epsilon$-separated it follows that the collection of balls of radii $\kappa(s,v_0/2)$ centered at $s \in S_i$ is pairwise disjoint. Therefore
$$\lambda_i(\MS(v_0/2)) = \frac{|S_i| v_0/2}{\vol_{M_i}(M_i)} \ge \frac{v_0}{2v_1}>0.$$
So $\lambda_\infty(\MS(v_0/2)) \ge \frac{v_0}{2v_1}>0.$

\end{proof}

By Claim 2 and the Portmanteau Theorem, $\lambda_i'$ converges to $\lambda_\infty'$ in the weak* topology as $i\to\infty$ where $\lambda'_i$ denotes the normalized restriction of $\lambda_i$ to $\MS(v_0/2)$. More precisely, 
$$\lambda'_i(E) := \frac{ \lambda_i(E \cap \MS(v_0/2))}{\lambda_i(\MS(v_0/2))}$$
for every Borel $E \subset \MS$. 



If $T \subset B_{M_i}(s,20\epsilon)$ is any $\epsilon$-separated subset then because 
$$v_1>\vol_{M_i}(B_{M_i}^o(q,20\epsilon)) \ge \vol_{M_i}(B^o_{M_i}(q,\epsilon/2)) > v_0>0$$
for every $q\in M_i$, we must have $v_0|T| \le v_1$. So $|T| \le v_1/v_0$. So setting $\Delta:=v_1/v_0$, we see that the degree of any vertex in $\Sigma(M_i,S_i,\rho^{S_i})$ is at most $\Delta$. So if $\nu'_i:=\int \nu_{M_i,p_i,S_i}~d\lambda'_i(M_i,p_i,S_i)$ then $\nu'_i \in \cM_1(\RSC(\Delta))$. By Claim 1 and the fact that $\MS(v_0/2) \subset \MS'$, $\lim_{i \to \infty} \nu'_i(U_r(K,v))= \nu'_\infty(U_r(K,v))$ for every finite $(K,v) \in \RSC(\Delta)$ and $r>0$. Because each $\nu'_i \in \cM_1(\RSC(\Delta))$ and the sets $U_r(K,v)$ generate the Borel sigma-algebra of $\RSC(\Delta)$, it follows that $\nu'_i$ converges to $\nu'_\infty$ in the weak* topology as $i\to\infty$.

Let $[K_i,w_i] \in \RSC$ be random with law $\nu'_i$. We claim that the law of $w_i$ given $K_i$ is uniform over the vertex set of $K_i$ (for $1\le i <\infty$). Indeed, the set of vertices of $K_i$ is $S_i$ and $w_i\in S_i$ is the nearest point to $p_i$ when $p_i \in M_i$ is chosen uniformly at random subject to the condition that $\dist_{M_i}(p_i,w_i) \le \kappa(w_i,v_0/2)$. The element $w_i$ is uniquely determined by $p_i$ because $S_i$ is $\epsilon$-separated with $\epsilon/2\ge \kappa(w_i,v_0/2)$. So the balls $B_{M_i}(s, \kappa(s,v_0/2))$ are pairwise disjoint for $s \in S_i$ and each has the same volume, namely $v_0/2$. Therefore $w_i$ is uniformly distributed over $S_i$ as required.

Because each $M_i$ is special, each is pathwise connected. This implies $K_i$ is connected. It now follows from Lemma \ref{lem:Elek2} that
\begin{eqnarray}\label{eqn:Elek}
\lim_{i\to\infty} \frac{\EE[b_k(K_i)]}{ \EE[ |V(K_i)| ]}
\end{eqnarray}
exists, where $\EE[\cdot]$ denotes expected value.

Because $B^o_{M_i}(s, r )$ is strongly convex for every $r \le 10\epsilon$, for any subset $S' \subset S_i$, either $\bigcap_{s \in S'} B^o_{M_i}(s, \rho^{S_i}(s))$ is empty or it is strongly convex. In the latter case, it is contractible by  \cite{Ro70}.  This implies that $K_i$ is homotopy equivalent to $M_i$ by \cite[Corollary 4G.3]{Ha02} (this is a slightly stronger version of Borsuk's Nerve Theorem \cite{Bo48}). So $\EE[b_k(K_i)] = b_k(M_i)$. Because of (\ref{eqn:Elek}) it now suffices to prove that 
$$\lim_{i\to\infty} \frac{\EE[ |V(K_i)| ]}{\vol(M_i)}$$
exists.

Note $|V(K_i)| = |S_i| = \vol( M_i(v_0/2)) (v_0/2)^{-1}$ where $M_i(v_0/2)$ is the set of all $q \in M_i$ such that $\dist_{M_i}(q,s)\le \kappa(s,v_0/2)$ for some $s\in S_i$. So
\begin{eqnarray*}
\lim_{i\to\infty}  \frac{\EE[ |V(K_i)| ]}{\vol(M_i)} &=& (v_0/2)^{-1} \lim_{i\to\infty} \frac{ \EE[\vol(M_i(v_0/2))] }{\vol(M_i)}\\
 &=& (v_0/2)^{-1} \lim_{i\to\infty} \lambda_i( \MS(v_0/2)) = (v_0/2)^{-1}  \lambda_\infty( \MS(v_0/2)).
 \end{eqnarray*}

\end{proof}

The next result is not needed in the sequel. However, it seems worth recording for the sake of future research. This result was first obtained by G. Elek \cite{El12}. 

\begin{defn}
We consider any Riemannian manifold $X$ as an mm-space with distance $\dist_X$ equal to the Riemannian distance and measure $\vol_X$ equal to the Riemannian volume form.
\end{defn}

\begin{cor}
Let $\{M_i\}_{i=1}^\infty$ be a sequence of connected closed smooth Riemannian $n$-manifolds. Suppose that $\{M_i\}_{i=1}^\infty$ Benjamini-Schramm converges in the sense of Definition \ref{defn:mu_M}. Suppose also that there are constants $\delta, \kappa$ such that for each $M_i$, all sectional curvatures are bounded from above by $\kappa$ and all Ricci curvatures are bounded from below by $\delta$. Suppose also that the injectivity radius of $M_i$ tends to infinity as $i\to\infty$. Then the normalized limit
$$\lim_{i\to\infty} \frac{b_d(M_i)}{\vol(M_i)}$$
exists for every $d\ge 1$.
\end{cor}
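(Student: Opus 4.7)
The strategy is to reduce the corollary to Theorem \ref{thm:Elek} by verifying its hypotheses for the sequence $\{M_i\}_{i=1}^\infty$, possibly after discarding finitely many terms (which does not affect the existence of the normalized Betti number limit).

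Each $M_i$ is a closed smooth Riemannian manifold and is therefore a special mm-space in the sense of Definition \ref{defn:special}: the Riemannian volume is non-atomic and fully supported, smooth hypersurfaces have measure zero (so metric spheres inside the injectivity radius are null), and connectedness gives path-connectedness. Choose $\epsilon>0$ small enough that $10\epsilon$ is strictly less than the convexity-radius lower bound determined by the upper sectional curvature $\kappa$ (any $\epsilon>0$ works if $\kappa\le 0$; otherwise $\epsilon<\pi/(20\sqrt{\kappa})$ suffices). Since $\injrad(M_i)\to\infty$, after discarding finitely many $i$ we may assume $\injrad(M_i)>20\epsilon$ for all remaining $i$, and then Whitehead's classical convexity theorem implies $B^o_{M_i}(p,r)$ is strongly convex for every $p\in M_i$ and every $r\le 10\epsilon$. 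Bishop--Gromov comparison with the Ricci lower bound $\delta$ yields a uniform upper bound $v_1=v_1(n,\delta,\epsilon)$ on $\vol(B(p,20\epsilon))$, and G\"unther comparison with the sectional upper bound $\kappa$ inside the injectivity radius yields a uniform positive lower bound $v_0=v_0(n,\kappa,\epsilon)$ on $\vol(B(p,\epsilon/2))$.

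The remaining hypothesis of Theorem \ref{thm:Elek} is that the limit measure $\mu_\infty$ be supported on $\M_{sp}$. Every $[M_\infty,p_\infty]$ in this support arises as a pointed mm-limit of $(M_i,p_i^*)$ for some sequence of points $p_i^*\in M_i$, with injectivity radius at $p_i^*$ tending to infinity and the same curvature bounds in force. The uniform small-ball control --- strong convexity together with two-sided volume comparison --- passes to the limit, forcing $M_\infty$ to be path-connected and to carry a non-atomic, fully supported Radon measure for which metric spheres are null; that is, $M_\infty$ is a special mm-space. Applying Theorem \ref{thm:Elek} then produces the limit $\lim_{i\to\infty}b_d(M_i)/\vol(M_i)$.

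The main obstacle is this last step: identifying BS-limits as special mm-spaces. With two-sided sectional bounds one could invoke Cheeger's $C^{1,\alpha}$-compactness to obtain an honest smooth Riemannian limit; here only an upper sectional and a lower Ricci bound are assumed, so one must combine Colding's volume continuity under Ricci-lower-bounded convergence with G\"unther's non-collapse estimate to propagate specialness to the limit.
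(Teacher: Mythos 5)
Your proposal follows the paper's proof essentially verbatim: verify the hypotheses of Theorem \ref{thm:Elek} using classical comparison geometry (G\"unther's volume lower bound, Bishop--Gromov's volume upper bound, Whitehead's convexity-radius estimate) and then apply it. These are exactly the facts the paper sources to Chavel's Theorems 3.7, 3.9 and 7.9. Where you go beyond the paper is in noting explicitly that $\mu_\infty\in\cM_1(\M_{sp})$ is a genuine hypothesis of Theorem \ref{thm:Elek} that does not follow formally from BS-convergence alone, whereas the paper dismisses it as trivial; that observation is correct and worth making. However, the machinery you invoke to close this gap --- Colding's volume continuity and the remark that Cheeger $C^{1,\alpha}$-compactness is unavailable --- is heavier than needed and somewhat misdirected. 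You do not need the limit $M_\infty$ to be a smooth manifold, only a special mm-space, and the topology on $\M$ already carries the volume measure along with the metric, so there is no separate question of whether the limit of the $\vol_{M_i}$'s coincides with Hausdorff measure on the GH-limit (which is what Colding's theorem addresses). The four defining properties of specialness follow directly from the uniform ball-volume control you already have: path-connectedness because a pointed Gromov--Hausdorff limit of complete geodesic spaces is a length space; full support from the uniform lower bound $\vol(B(p,\epsilon/2))>v_0$ passing to the limit; non-atomicity because Bishop--Gromov forces $\vol(B(p,r))\to 0$ as $r\to 0$ uniformly in $i$ and $p$; and spheres of measure zero because Bishop--Gromov also uniformly bounds the derivative of $r\mapsto\vol(B(p,r))$ in the non-collapsed regime, so these functions are uniformly Lipschitz on compact $r$-intervals and hence the limit function is continuous in $r$. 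So your proof is correct and matches the paper's intent, but the final verification can be done with these elementary uniform estimates rather than with non-collapsing Ricci-limit theory.
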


\begin{proof}
It suffices to check that the conditions of Theorem \ref{thm:Elek} are met. The volume bounds on balls follow from \cite[Theorems 3.7 and 3.9]{Ch93}. Strong convexity of small balls follows from \cite[Theorem 7.9]{Ch93}. The other conditions are trivial to verify.  
\end{proof}

\section{$L^2$-Betti numbers}\label{sec:L2}
 In this section, we quickly review facts about $L^2$-invariants used in the proof of Theorem \ref{thm:main-manifold}. We refer the reader to \cite{Lu02, Lu09} for background.

Given a topological space $X$ with a continuous $\Gamma$-action (where $\Gamma$ is a countable discrete group), we may define the $L^2$-Betti numbers $b_k^{(2)}(X;\mathcal{N}(\Gamma))$ (for $k \in \N$) (where $\mathcal{N}(\Gamma)$ denotes the von Neumann algebra of $\Gamma$). For simplicity, we let $b_k^{(2)}(X)$ denote $b_k^{(2)}(\tilde{X};\mathcal{N}(\pi_1(X)))$ where $\tilde{X}$ is the universal cover of $X$ and $\pi_1(X)$ acts on $\tilde{X}$ in the usual way. These numbers are known to be homotopy invariants. Hence we may define the $L^2$-Betti numbers of a countable discrete group $\Gamma$ by $b_k^{(2)}(\Gamma) := b_k^{(2)}(B\Gamma)$ where $B\Gamma$ is any  classifying space for $\Gamma$ (i.e., $B\Gamma$ is a connected CW-complex with $\pi_1(B\Gamma)$ isomorphic to $\Gamma$ and $\pi_n(B\Gamma) = 0$ for all $n\ge 2$). 

\begin{thm}\label{thm:Luck}
Let $M$ be a finite connected CW-complex. Suppose there is a decreasing sequence $\{N_i\}_{i=1}^\infty$ of finite-index normal subgroups $N_i \vartriangleleft \pi_1(M)$ such that $\cap_{i=1}^\infty N_i = \{e\}$. Let $M_i \to M$ be the finite cover associated to $N_i$. Then for any integer $k\ge 0$,
$$\lim_{i\to\infty} \frac{b_k(M_i)}{[\pi_1(M):N_i]} = b^{(2)}_k(M)$$
where $b^{(2)}_k(M)$ is the $k$-th $L^2$-Betti number of $M$ and $b_k(M_i)$ is the ordinary $k$-th Betti number of $M_i$ (with real coefficients).

\end{thm}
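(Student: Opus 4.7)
The plan is to follow L\"uck's original approach \cite{Lu94}, reducing the statement to convergence at zero of the spectral measures of a sequence of finite-dimensional combinatorial Laplacians. Fix a $\Gamma$-equivariant CW structure on the universal cover $\tilde M$ (with $\Gamma := \pi_1(M)$), so that each cellular chain module $C_k(\tilde M)$ is a finitely generated free $\Z[\Gamma]$-module of rank $c_k$. The combinatorial Laplacian $L_k := d_{k+1}d_{k+1}^* + d_k^* d_k$ is then a self-adjoint matrix with entries in $\Z[\Gamma]$, and its image $L_k^{(i)}$ under the map $\Z[\Gamma] \to \Z[G_i]$ (with $G_i := \Gamma/N_i$) is the corresponding Laplacian on $C_k(M_i)$. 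By the Hodge-theoretic definition of $L^2$-Betti numbers and ordinary Hodge theory on the finite cover,
\[
b_k^{(2)}(M) = \dim_{\mathcal N(\Gamma)} \ker L_k = F_\infty(\{0\}), \qquad \frac{b_k(M_i)}{[\Gamma:N_i]} = F_i(\{0\}),
\]
where $F_\infty$ and $F_i$ are the spectral measures of $L_k$ and $L_k^{(i)}$ with respect to the $\mathcal N(\Gamma)$-trace and to the normalized trace $|G_i|^{-1}\mathrm{tr}$, respectively. So it suffices to prove $F_i(\{0\}) \to F_\infty(\{0\})$.

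Moment convergence is the first step. For $a = \sum_g a_g g \in \Z[\Gamma]$ of finite support, $\mathrm{tr}_{\mathcal N(\Gamma)}(a) = a_e$ while the normalized trace of the image $a^{(i)} \in \Z[G_i]$ equals $\sum_{g \in N_i} a_g$, and these agree as soon as $N_i \cap \mathrm{supp}(a) \subseteq \{e\}$, which holds eventually because $\bigcap_i N_i = \{e\}$. Applying this entrywise to the entries of $L_k^n$ (which have support in $(\mathrm{supp}\, L_k)^n$) gives $\int t^n\,dF_i \to \int t^n\,dF_\infty$ for every $n$. Since $\|L_k^{(i)}\| \le \|L_k\|$ (as the trivial representation of $\Gamma$ factors through each $G_i$), all $F_i$ are supported in a common interval $[0,K]$, so moment convergence upgrades to weak$^*$ convergence on $[0,K]$. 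The limsup direction is then immediate: for every continuity point $\epsilon>0$ of $F_\infty$,
\[
\limsup_i F_i(\{0\}) \le \limsup_i F_i([0,\epsilon]) = F_\infty([0,\epsilon]),
\]
and letting $\epsilon\downarrow 0$ through such points gives $\limsup_i F_i(\{0\}) \le F_\infty(\{0\})$.

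The main obstacle is the liminf direction, since weak convergence does not in general preserve mass at an atom. The key idea is integrality: with respect to the $\Z$-basis of $\Z[G_i]^{c_k}$ obtained by pairing basis cells with group elements, $L_k^{(i)}$ is a symmetric integer matrix of size $c_k|G_i|$. Factoring the highest power of $t$ out of its characteristic polynomial, which lies in $\Z[t]$, shows that the product of the nonzero eigenvalues of $L_k^{(i)}$ is a nonzero integer, hence of absolute value at least $1$. Taking logs, dividing by $|G_i|$, and bounding the contribution from eigenvalues above $1$ by $c_k \log\|L_k\|$ (since $F_i$ has total mass $c_k$ and all eigenvalues lie in $[0,K]$) yields a uniform bound
\[
\int_{(0,1]} |\log t|\, dF_i(t) \le C
\]
with $C = c_k \log\max(1,K)$ independent of $i$. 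Chebyshev's inequality then gives $F_i((0,\epsilon]) \le C/|\log \epsilon|$ for all $\epsilon\in(0,1)$, uniformly in $i$. Therefore, for every continuity point $\epsilon\in(0,1)$ of $F_\infty$,
\[
\liminf_i F_i(\{0\}) \ge \liminf_i F_i([0,\epsilon]) - \limsup_i F_i((0,\epsilon]) \ge F_\infty([0,\epsilon]) - \frac{C}{|\log\epsilon|},
\]
and letting $\epsilon\downarrow 0$ through continuity points of $F_\infty$ gives $\liminf_i F_i(\{0\}) \ge F_\infty(\{0\})$, completing the proof.
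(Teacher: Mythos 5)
The paper does not prove this statement; it simply cites it as \cite[Theorem 0.1]{Lu94}. What you have written is a faithful reconstruction of L\"uck's original argument: realize $b_k^{(2)}(M)$ and $b_k(M_i)/[\Gamma:N_i]$ as the masses at $0$ of the spectral measures $F_\infty$, $F_i$ of the combinatorial Laplacian and its finite-quotient images, deduce moment convergence from $\bigcap_i N_i = \{e\}$, upgrade to weak$^*$ convergence via a uniform spectral bound, get the $\limsup$ inequality for free, and extract the $\liminf$ inequality from the integrality of the characteristic polynomial of $L_k^{(i)}$ via the log-determinant estimate. The overall structure and the key integrality/log-determinant step are correct and are exactly L\"uck's.

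One genuine error, though easily repaired: the claim $\|L_k^{(i)}\| \le \|L_k\|$, justified by ``the trivial representation of $\Gamma$ factors through each $G_i$,'' is false in general. The reduced norm on a quotient of $\Gamma$ is controlled by the \emph{full} $C^*$-norm of $\Gamma$, not by the reduced norm, and these differ whenever $\Gamma$ is nonamenable. Concretely, if $\Gamma = F_2$ with free generators $g_1,g_2$ and $a = \tfrac14(g_1+g_1^{-1}+g_2+g_2^{-1})$, then $\|a\|_{B(\ell^2(\Gamma))} = \sqrt3/2$ by Kesten's theorem, whereas $\|a\|_{B(\ell^2(G))} = 1$ for every nontrivial finite quotient $G$ (the constant vector is an eigenvector with eigenvalue $1$). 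Since the $\Gamma_i$ you care about are typically nonamenable, you cannot use $\|L_k\|_{B(\ell^2(\Gamma))}$ as the common bound $K$. The fix is one line: bound $\|L_k^{(i)}\|$ by a quantity that is manifestly independent of the target group, e.g.\ $K := c_k \max_{j,l} \|(L_k)_{jl}\|_1$ where $\|\cdot\|_1$ is the $\ell^1$-norm of group-ring coefficients; this is the bound L\"uck actually uses. Since the rest of your argument only ever needs \emph{some} uniform $K$ (for tightness of $\{F_i\}$ and for the constant $C = c_k \log\max(1,K)$), the proof goes through unchanged after this replacement.
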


\begin{proof}
This is \cite[Theorem 0.1]{Lu94}. 
\end{proof}





\section{Unimodular measures}\label{sec:unimodular}

Measures of the form $\mu_M$ (where $M$ is a non-null finite volume mm-space) have a special property called {\em unimodularity} which is a kind of statistical homogeneity. We will use this property to prove convergence of certain sequences in $\cM_1(\M)$. To begin we need a few definitions.

\begin{defn}
A {\em doubly-pointed mm-space} is a quintuple $(M,p,q,\dist_M,\vol_M)$ where $(M,\dist_M,\vol_M)$ is an mm-space and $p,q \in M$. We will usually denote such a space by $(M,p,q)$ leaving the rest implicit. We say $(M,p,q)$ and $(M',p',q')$ are {\em doubly-pointed isomorphic} if there is an isometry from $M_1$ to $M_2$ which takes $p$ to $p'$, $q$ to $q'$ and $\vol_M$ to $\vol_{M'}$. Let $\DM$ denote the set of all isomorphism classes of doubly-pointed mm-spaces.  We let $[M,p,q] \in \DM$ denote the isomorphism class of $(M,p,q)$. We can embed $\DM$ into $\M^2$ by $[M,p,q,\dist_M,\vol_M] \mapsto [M,p,\dist_M,\vol_M, \delta_q]$ where $\delta_q$ is the Dirac probability measure concentrated on $\{q\}$. We give $\DM$ the induced topology.
\end{defn}

\begin{defn}
Let  $\lambda \in \cM_1(\M)$. Define measures $\lambda_l,\lambda_r$ on $\DM$ by
$$d\lambda_l ([M,p,q]) = d\vol_M(q) d\lambda([M,p]), \quad d\lambda_r([M,p,q]) = d\vol_M(p) d\lambda([M,q])$$
For example, this means that if $f$ is a positive Borel function on $\DM$ then
$$\int f([M,p,q]) ~d\lambda_l ([M,p,q]) = \int f([M,p,q]) ~d\vol_M(q) d\lambda([M,p]).$$
We say that $\lambda$ is {\em unimodular} if $\lambda_l = \lambda_r$. This term originally appeared in percolation theory (see e.g.,  \cite{AL07} and the references therein). 
\end{defn}

\begin{example}
Let $M$ be a non-null finite volume mm-space and $p \in M$ be a uniformly random point. Then $\Law([M,p])=\mu_M \in \cM_1(\M)$ is unimodular. Assuming $M$ is connected, let $\widetilde{M}$ be the universal cover of $M$ and let $\tilde{p} \in \widetilde{M}$ be an inverse image of $p$. The pointed-isometry class of $(\widetilde{M},\tilde{p})$ does not depend on the choice of $\tilde{p}$. Also $\Law(\widetilde{M},\tilde{p})$ is unimodular.
\end{example}

\begin{lem}\label{lem:closed}
The space of unimodular measures in $\cM_1(\M)$ is closed in $\cM_1(\M)$.
\end{lem}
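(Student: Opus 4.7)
The plan is to reinterpret unimodularity as invariance under a continuous swap map on $\DM$, and then deduce closedness from the continuity of $\lambda \mapsto \lambda_l$. Let $\sigma \colon \DM \to \DM$ be the swap $[M,p,q] \mapsto [M,q,p]$. First I would verify that $\sigma$ is continuous: if $[M_n,p_n,q_n] \to [M_\infty,p_\infty,q_\infty]$ via isometric embeddings $\varphi_n \colon M_n \to Z$ witnessing the convergence, then $\varphi_n(q_n) \to \varphi_\infty(q_\infty)$ as well, and the same embeddings witness $[M_n,q_n,p_n] \to [M_\infty,q_\infty,p_\infty]$. A direct unfolding of the defining formulas gives $\sigma_* \lambda_l = \lambda_r$, so $\lambda \in \cM_1(\M)$ is unimodular if and only if $\lambda_l$ is $\sigma$-invariant. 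Since $\sigma_* \colon \cM(\DM) \to \cM(\DM)$ is weak* continuous, its fixed-point set is closed; so it suffices to show that $\Theta \colon \cM_1(\M) \to \cM(\DM)$, $\lambda \mapsto \lambda_l$, is weak* continuous.

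For continuity of $\Theta$, fix a compactly supported continuous $f \colon \DM \to \R$ with support $K$, and set $F([M,p]) := \int f([M,p,q]) \, d\vol_M(q)$. The projection $\pi \colon \DM \to \M$, $[M,p,q] \mapsto [M,p]$, is continuous (under the embedding $\DM \hookrightarrow \M^2$ it is the restriction of the map that forgets the last measure), so $F$ vanishes outside the compact set $\pi(K)$. Granting that $F$ is also bounded and continuous on $\M$, weak* convergence $\lambda_n \to \lambda_\infty$ in $\cM_1(\M)$ then gives
\begin{equation*}
\int f \, d(\lambda_n)_l \;=\; \int F \, d\lambda_n \;\longrightarrow\; \int F \, d\lambda_\infty \;=\; \int f \, d(\lambda_\infty)_l,
\end{equation*}
establishing weak* continuity of $\Theta$ and hence the lemma, since the set of unimodular measures is $\Theta^{-1}(\{\mu : \sigma_*\mu = \mu\})$.

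The main technical obstacle is the continuity of $F$. Suppose $[M_n,p_n] \to [M_\infty,p_\infty]$ via embeddings $\varphi_n \colon M_n \to Z$ giving pointed Hausdorff convergence near the basepoints and weak* convergence $(\varphi_n)_*\vol_{M_n} \to (\varphi_\infty)_*\vol_{M_\infty}$. I would use compactness of $K$ together with uniform continuity of $f$ on a neighborhood of $K$ to produce, for each $\epsilon > 0$, a single compactly supported continuous $g_\epsilon \colon Z \to \R$ with $|g_\epsilon(\varphi_n(q)) - f([M_n,p_n,q])| < \epsilon$ whenever $[M_n,p_n,q]$ lies in an $\epsilon$-neighborhood of $K$, uniformly in all sufficiently large $n$. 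Weak* convergence of the pushed-forward volumes then gives $\int g_\epsilon \, d(\varphi_n)_*\vol_{M_n} \to \int g_\epsilon \, d(\varphi_\infty)_*\vol_{M_\infty}$, and a uniform upper bound on the total relevant $\vol_{M_n}$-mass (obtained by applying weak* convergence to a bump function covering $\pi(K)$) controls the error, so letting $\epsilon \to 0$ yields $F([M_n,p_n]) \to F([M_\infty,p_\infty])$. The boundedness of $F$ on $\pi(K)$ follows from the same uniform mass bound together with boundedness of $f$.
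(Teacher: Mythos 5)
Your proof follows essentially the same route as the paper: both reduce the lemma to the weak* continuity of $\lambda \mapsto \lambda_l$ (and $\lambda_r$), the paper stating this without justification via the map $\pi(\lambda)=(\lambda_l,\lambda_r)$ into $\cM(\DM)\times\cM(\DM)$ and the closedness of the diagonal, while you repackage the same observation through the swap map $\sigma$ and its closed fixed-point set. Your additional sketch of the continuity of $\Theta$ fills in a step the paper takes for granted, and while the construction of $g_\epsilon$ and the uniform mass bound are only outlined, they are in the right spirit.
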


\begin{proof}

Let $\pi: \cM_1(\M) \to \cM(\DM) \times \cM(\DM)$ be the map $\pi(\lambda) = (\lambda_l, \lambda_r)$. This is a continuous map. Since the space of unimodular measures is $\pi^{-1}( \{ (\lambda_1,\lambda_2):~ \lambda_1=\lambda_2\})$, it must be closed in $\cM_1(\M)$.
\end{proof}

\begin{remark}
Let $\cF \subset \cM_1(\M)$ be the space of all measures of the form $\mu_M$ where $M$ is a finite-volume mm-space and $\mu_M=\Law([M,p])$ where $p\in M$ is uniformly random. The relative closure $\overline{\cF} \cap \cM_1(\M) \subset \cM_1(\M)$ is the space of {\em sofic measures}.  Are all unimodular measures sofic? This question is a generalization of the well-known problem: are all groups sofic? It is also a generalization of the problem: are all unimodular networks sofic? This was first asked in \cite{AL07}.
\end{remark}

\begin{defn}
If $X$ is a metric measure space then $\Isom(X)$ denotes the group of all meaure-preserving isometries $\phi:X \to X$. To be precise, we require $\phi_*\vol_X=\vol_X$. A subgroup $\Lambda<\Isom(X)$ is a {\em lattice} if there exists a measurable subset $\Delta \subset X$ of positive finite volume such that $\{\gamma \Delta:~\gamma\in \Lambda\}$ is a partition of $X$. Such a set is called a {\em fundamental domain} for $\Lambda$. 
\end{defn}

\begin{lem}\label{lem:X}
Let $X$ be a pathwise connected mm-space. Suppose there is a lattice $\Lambda <\Isom(X)$. Then there is a unique unimodular measure $\mu \in \cM_1(\M)$ such that $\mu$-almost every $[M,p] \in \M$ is such that $(M,\dist_M,\vol_M)$ is isomorphic with $(X,\dist_X,\vol_X)$. 
\end{lem}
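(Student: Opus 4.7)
I would begin with the obvious construction. Fix a fundamental domain $\Delta\subset X$ for $\Lambda$, which has $0<\vol_X(\Delta)<\infty$, and set
$$\mu \,:=\, \vol_X(\Delta)^{-1}\int_\Delta \delta_{[X,p]}\,d\vol_X(p) \;\in\;\cM_1(\M).$$
This is a Borel probability measure (once one checks that $p\mapsto[X,p]$ is Borel, via the metric on $\M$ from Theorem~\ref{thm:Mn}), and by construction $\mu$-a.e. $[M,p]$ has $M$ isomorphic to $X$. Independence of the choice of $\Delta$ is easy: given a second fundamental domain $\Delta'$, write $\Delta=\bigsqcup_{\gamma\in\Lambda}(\Delta\cap\gamma\Delta')$, and use $[X,p]=[X,\gamma^{-1}p]$ together with the fact that $\gamma\in\Lambda\subset\Isom(X)$ preserves $\vol_X$ to rewrite the integral over $\Delta$ as one over $\Delta'$.

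Next I would verify unimodularity directly. Viewing a Borel $g\ge 0$ on $\DM$ as a diagonally $\Isom(X)$-invariant Borel function on $X\times X$, one has
$$\int g\,d\mu_l \,=\, \vol_X(\Delta)^{-1}\int_\Delta\!\!\int_X g(p,q)\,d\vol_X(q)\,d\vol_X(p).$$
Decomposing $X=\bigsqcup_{\gamma\in\Lambda}\gamma\Delta$ and substituting $q=\gamma q'$ with $q'\in\Delta$ lets one use $g(p,\gamma q')=g(\gamma^{-1}p,q')$ and $\Lambda$-invariance of $\vol_X$ to replace the inner integral by $\sum_\gamma\int_\Delta g(\gamma^{-1}p,q')\,d\vol_X(q')$. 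Re-indexing and reassembling $\gamma^{-1}p\in X$, Fubini on the finite-measure product $\Delta\times\Delta$ (after a second analogous resummation) yields exactly $\int g\,d\mu_r$.

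For uniqueness, suppose $\mu'\in\cM_1(\M)$ is unimodular and $\mu'$-a.e.\ $[M,p]$ has $M\cong X$. Let $\Phi:X\to\M$ be $p\mapsto[X,p]$. For Borel $A,B\subset\Delta$, I would construct a diagonally $\Isom(X)$-invariant test function $g:X\times X\to[0,\infty)$ that "marks" a single representative of each $\Isom(X)$-orbit — concretely, $g(p,q)=\sum_{\psi\in\Isom(X)}\mathbf{1}_A(\psi p)\mathbf{1}_B(\psi q)$, which is locally finite because $\Lambda$ is a lattice (so $\Isom(X)$ acts properly on $X$) and $A,B$ lie in a fundamental domain. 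Substituting into the unimodularity identity for $\mu'$, the inner integrals collapse (using $\Lambda$-invariance of $\vol_X$ and the partition $X=\bigsqcup_\gamma\gamma\Delta$) to give
$$\vol_X(A)\,\mu'(\Phi(B)) \,=\, \vol_X(B)\,\mu'(\Phi(A)).$$
Hence $A\mapsto\mu'(\Phi(A))$ and $A\mapsto\vol_X(A)$ are proportional on Borel $A\subset\Delta$; the proportionality constant is forced to be $\vol_X(\Delta)^{-1}$ because $\mu'(\M)=1$ and $\Phi(\Delta)$ carries all of $\mu'$. Thus $\mu'=\mu$.

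The main obstacle is the uniqueness step, and within it the construction of the test function $g$. When $\Isom(X)$ strictly contains $\Lambda$, the natural $\Lambda$-periodization $\sum_{\gamma\in\Lambda}\mathbf{1}_A(\gamma p)\mathbf{1}_B(\gamma q)$ is only diagonally $\Lambda$-invariant, not $\Isom(X)$-invariant, so it does not define a function on $\DM$; one must either sum over $\Isom(X)/\Lambda$ (requiring proper discontinuity and local finiteness of the resulting sum) or approximate $\Isom(X)$-invariant kernels by compactly supported averages and pass to a limit. Verifying Borel measurability and that these test functions actually determine $\mu'$ on the support $\Phi(X)$ is where I expect the technical work to lie; everything else is a bookkeeping exercise in Fubini and lattice invariance.
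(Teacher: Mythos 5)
Your existence construction and Fubini verification of unimodularity match the paper (which declares the latter ``easy to check''), but your uniqueness argument has a genuine gap that you yourself flag. The periodization $g(p,q)=\sum_{\psi\in\Isom(X)}\mathbf{1}_A(\psi p)\mathbf{1}_B(\psi q)$ is ill-defined whenever $\Isom(X)$ is uncountable, which is the typical case of interest (e.g.\ $\Isom(\H^n)$ or any continuous Lie group with a lattice): for fixed $(p,q)$ the index set of nonzero terms is a precompact but generically \emph{uncountable} subset of $\Isom(X)$, so the sum is not locally finite in any useful sense. Neither of your proposed repairs is clearly workable --- $\Isom(X)/\Lambda$ is not a group, and passing from compactly supported averages to a limit is precisely the hard step you are leaving unproved. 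Proper discontinuity of $\Lambda$ on $X$ does not save the sum, because the sum runs over the full isometry group, not over $\Lambda$.

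The paper avoids this entirely with a different device, and the resulting logical structure of the uniqueness step also differs from yours. Rather than building a diagonally $\Isom(X)$-invariant kernel on $X\times X$ and then pushing it to $\DM$, the paper defines a Borel test function \emph{directly} on $\DM$: $f([M,p,q])=1$ if there exists a doubly-pointed isomorphism from $(M,p,q)$ to $(X,p',q')$ with $p'\in\pi^{-1}(A)\cap\Delta$ and $q'\in\Delta$, else $0$. This is automatically well-defined since $\DM$ is already the quotient by isomorphism, and it requires no summation over isometries at all. Using this $f$, the paper first shows that any competing unimodular $\mu$ supported on copies of $X$ is absolutely continuous with respect to the candidate $\nu$ (by showing $\nu(A)=0\Rightarrow\mu(A)=0$), and then applies the unimodularity identity a second time to show the Radon--Nikodym derivative is a.e.\ constant, hence $\mu=\nu$. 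Your proposed one-shot identity $\vol_X(A)\,\mu'(\Phi(B))=\vol_X(B)\,\mu'(\Phi(A))$ would be a shortcut if it could be justified, but as written it rests on the undefined kernel $g$. To repair your argument along the paper's lines, replace $g$ by an existential indicator on $\DM$ and split the uniqueness proof into the absolute-continuity step followed by the constant-density step.
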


\begin{proof}


Let $\Delta \subset X$ be a Borel fundamental domain for $\Lambda$. Let $\pi:X \to \M$ be the map $\pi(p)=[X,p]$. Let $\nu = \pi_*( \frac{(\vol_X)|_\Delta}{\vol_X(\Delta)})$ be the pushforward of the normalized volume on $X$ restricted to $\Delta$. It is easy to check that $\nu$ is a unimodular measure on $\M$. This shows existence.

Now suppose that $\mu$ is as in the statement of the lemma. To be precise, $\mu \in \cM_1(\M)$ is a unimodular measure such that $\mu$-almost every $[M,p] \in \M$ is such that $(M,\dist_M,\vol_M)$ is isomorphic with $(X,\dist_X,\vol_X)$. It suffices to show that $\mu=\nu$. Let $A \subset \M$ be Borel. Suppose that $\nu(A)=0$. We will show that $\mu(A)=0$. Note that $\vol_X(\pi^{-1}(A) \cap \Delta)=0$. Since $\Delta$ is a fundamental domain of a lattice, this implies $\vol_X(\pi^{-1}(A))=0$. Define a function $f$ on $\DM$ by $f( [M,p,q] )=1$ if there is a doubly-pointed isomorphism from $(M,p,q)$ to $(X,p',q')$ and $p' \in \pi^{-1}(A) \cap \Delta, q' \in \Delta$. Let $f([M,p,q])=0$ otherwise. Because $\mu$ is unimodular,
\begin{eqnarray*}
 \vol_X(\Delta) \mu(A) &\le & \iint f([M,p,q]) ~d\vol_M(q) d\mu([M,p])\\
  &=& \iint f([M,p,q]) ~d\vol_M(p) d\mu([M,q]) = 0.
  \end{eqnarray*}
So $\mu(A)=0$. Because $A$ is arbitrary, $\mu$ is absolutely continuous to $\nu$. So there exists a nonnegative Borel function $r'$ such that $d\mu = r' d\nu$. By pulling back under $\pi$ we see that there is a non-negative Borel function $r$ on $\Delta$ such that 
$$d\mu([X,p]) = r(p) d\left(\frac{\pi_*\vol_X|_\Delta}{\vol_X(\Delta)}\right)(p).$$
Because $\mu$ is unimodular $d\vol_X(q)d\mu([X,p]) = d\vol_X(p)d\mu([X,q])$. Therefore
$$r(p)d\vol_X(q) d\vol_X(p) = r(q) d\vol_X(p)d\vol_X(q).$$
In particular, $r(p)=r(q)$ for a.e. $p,q \in \Delta$. This implies $\mu=\nu$ as required.
\end{proof}

Next, we determine conditions under which a sequence of mm-spaces Benjamini-Schramm-converges to the unique unimodular measure concentrated on pointed isomorphism classes of mm-spaces that are isomorphic with $X$.

\begin{defn}
Given a metric space $M$ and a subset $M' \subset M$, let $\partial M' = \overline{M'} \cap \overline{M\setminus M'}$.  For $r>0$, let $N_r(M')$ be the closed radius-$r$ neighborhood of $M'$ in $M$. 
\end{defn}

\begin{defn}
If $M$ is a path-connected metric space and $M' \subset M$ then $\covrad(M'|M)$ is the supremum over all $r>0$ such that if $\pi:\widetilde{M} \to M$ is the universal cover and $p \in \pi^{-1}(M') \subset \widetilde{M}$ then $\pi$ restricted to $B_{\widetilde{M}}(p,r)$ is an isometry onto its image.
\end{defn}

\begin{lem}\label{lem:X2}
Let $X$ be a pathwise-connected mm-space with a cocompact subgroup $\Lambda<\Isom(X)$. Let $\{\Gamma_i\}_{i=1}^\infty$ be a sequence of geometric subgroups of $\Isom(X)$ and $M_i \subset X/\Gamma_i$ be a finite-volume closed subspace. Suppose 
\begin{itemize}
\item $\lim_{i\to\infty} \covrad(M_i|X/\Gamma_i)= +\infty$ and 
\item $\lim_{i\to\infty} \frac{ \vol(N_r(\partial M_i))}{\vol(M_i)} = 0$ for every $r>0$.
\end{itemize}
Then $\lim_{i\to\infty} \mu_{M_i}$ exists in $\cM_1(\M)$ and is the unique unimodular measure supported on the set of pointed isomorphism classes of mm-spaces that are isomorphic with $X$.
\end{lem}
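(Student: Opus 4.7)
\textbf{Proof plan for Lemma \ref{lem:X2}.} The strategy is to show that every weak$^*$ subsequential limit of $\{\mu_{M_i}\}_{i=1}^\infty$ in $\cM_1(\M)$ is a unimodular probability measure concentrated on the set of pointed isomorphism classes $[X,\tilde p]$. Lemma \ref{lem:X} then forces every such limit to coincide with the unique unimodular measure $\nu$ on this set, so the entire sequence converges to $\nu$.

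Because each $\mu_{M_i}$ is unimodular (as in the example immediately preceding Lemma \ref{lem:closed}), Lemma \ref{lem:closed} shows that unimodularity is inherited by any weak$^*$ limit. The heart of the argument is the following local statement: for every fixed $R>0$,
\[
\lim_{i\to\infty}\mu_{M_i}\bigl(\{[M,p]\in\M:\, B_M(p,R)\text{ is isomorphic as a pointed mm-space to }B_X(\tilde p,R)\text{ for some }\tilde p\in X\}\bigr)=1.
\]
To prove this, fix $R>0$, set $M_i^R:=\{p\in M_i:\dist_{X/\Gamma_i}(p,\partial M_i)>R\}$, and take $i$ so large that $\covrad(M_i|X/\Gamma_i)>R$. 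The boundary-neighborhood hypothesis gives $\vol(M_i^R)/\vol(M_i)\to 1$. For $p\in M_i^R$, a geodesic segment of length at most $R$ starting at $p$ cannot reach $\partial M_i$, so the intrinsic ball $B_{M_i}(p,R)$ agrees with the ambient ball $B_{X/\Gamma_i}(p,R)$, which by the covering-radius hypothesis is volume-preservingly isometric to $B_X(\tilde p,R)$ for any lift $\tilde p\in X$.

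Since $\Lambda<\Isom(X)$ acts cocompactly on $X$, the map $\tilde p\mapsto[X,\tilde p]$ factors through $X/\Lambda$, and its image $\cC:=\{[X,\tilde p]:\tilde p\in X\}\subset\M$ is therefore compact. Combined with the local statement, this gives tightness of $\{\mu_{M_i}\}$ (via a diagonal argument over $R$, using that a neighborhood of $\cC$ in the scale-$R$ topology is relatively compact and, for large $i$, has $\mu_{M_i}$-mass at least $1-\epsilon$). Extract a subsequential limit $\mu$. The local statement, which describes a closed condition in $\M$, passes to the limit and shows $\mu$ is supported on $\cC$; combined with unimodularity, Lemma \ref{lem:X} yields $\mu=\nu$. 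Since every subsequential limit is $\nu$, the full sequence converges.

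The main obstacle is the technical step of promoting ball-level isometry to convergence in $\M$, whose topology is defined via isometric embeddings into a common proper separable space $Z$. Given $p_i\in M_i^R$ with $B_{M_i}(p_i,R)\cong B_X(\tilde p_i,R)$, one builds $Z$ by gluing $M_i$ and $X$ along this common ball, so that the pointed Hausdorff condition and the weak$^*$ convergence of the pushforward volumes hold up to scale $R$; letting $R\to\infty$ along a suitable subsequence yields $[M_i,p_i]\to[X,\tilde p_i]$ in $\M$. The remaining issues, such as the careful comparison of intrinsic and ambient metrics near $\partial M_i$ and the diagonal construction for tightness, are routine but delicate.
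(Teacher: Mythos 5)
Your proposal follows the same route as the paper's proof: use Lemma \ref{lem:closed} for unimodularity of subsequential limits, show that the covering-radius and boundary-neighborhood hypotheses force large balls around typical basepoints to match balls in $X$ (hence any limit is supported on $X$-isomorphism classes), obtain tightness from cocompactness of $\Lambda$, and conclude uniqueness of the limit via Lemma \ref{lem:X}. The details you flag as ``routine but delicate'' (gluing to a common space $Z$, the diagonal argument in $R$) correspond to the paper's choice of a compact transversal $D\subset X$ and the observation that $\{\Law([X,p_i''])\}$ is precompact; both arguments are structurally the same.
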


\begin{proof}
Let $p_i \in M_i$ be uniformly random (so $\mu_{M_i} = \Law([M_i,p_i])$). The two hypotheses on $\{M_i\}_{i=1}^\infty$ imply: for every $r>0$, the probability that $B_{X/\Gamma_i}(p_i,r) \subset M_i$ tends to $1$ as $i\to\infty$ for any fixed $r>0$. Moreover, the probability that $B_{X/\Gamma_i}(p_i,r)$ is isomorphic with a ball in $X$ tends to $1$ as $i\to\infty$ (the universal cover provides the isometry). It follows that if $\mu_\infty$ is any subsequential limit point of $\{\mu_{M_i}\}_{i=1}^\infty$ then $\mu_\infty$-a.e. $[M,p]$ is such that $M$ is isomorphic with $X$. Lemma \ref{lem:closed} implies $\mu_\infty$ is unimodular and Lemma \ref{lem:X} implies $\mu_\infty$ is the {\em unique} unimodular measure supported on pointed isomorphism classes of mm-spaces that are isomorphic with $X$.

It now suffices to show that $\{\mu_{M_i}\}_{i=1}^\infty$ is precompact (so that a subsequential limit exists).  Let $D \subset X$ be a compact set such that $\Lambda D= X$. Let $p'_i \in X$ be a lift of $p_i$ under the covering map $X \to X/\Gamma_i$. Let $p''_i \in D$ be a point such that $\Lambda p'_i = \Lambda p''_i$. Note that the pointed isomorphism class of $(X,p''_i)$ depends only on $p_i$. Therefore $\Law([X,p''_i]) \in \cM_1(\M)$ is well-defined. Because $D$ is compact, $\{\Law([X,p''_i])\}_{i=1}^\infty$ is precompact. Fix $r>0$. As noted before, with probability tending to $1$ as $i\to\infty$, $B_{M_i}(p_i,r)$ is isomorphic with $B_X(p''_i,r)$. So $\{\Law([B_{M_i}(p_i,r),p_i]\}_{i=1}^\infty$ is precompact  in $\cM_1(\M)$ which implies, since $r$ is arbitrary, that $\{\mu_{M_i}\}_{i=1}^\infty$ is precompact.


\end{proof}

\section{Proof of Theorem \ref{thm:main-manifold}}\label{sec:main}



We will derive Theorem \ref{thm:main-manifold} from Theorem \ref{thm:main0} below which essentially, is a version of Theorem \ref{thm:main-manifold} for metric-measure spaces. First we need a few definitions.
\begin{defn}
Let $X$ be a metric measure space and $r>0$. We define the {\em radius-$r$ Cheeger constant of $X$} by
$$h_r(X) = \inf_M \frac{ \vol_X( N_r(\partial M))}{\vol_X(M)}$$
where the infimum is over all pathwise-connected compact subsets $M \subset X$ with positive volume, $\partial M = M \cap \overline{X \setminus M}$ and $N_r(\partial M)$ is the closed radius-$r$ neighborhood of $M$ and $\vol_X(M)\le \vol_X(X)/2$.
\end{defn}

\begin{defn}
For any class of groups $\cF$, metric measure space $X$ and $r>0$ let $I_r(X|\cF)=\inf_\Gamma h_r(X/\Gamma)$ where the infimum is over all geometric $\Gamma < \Isom(X)$ such that $\Gamma$ is isomorphic to a group in $\cF$.
\end{defn}

\begin{thm}\label{thm:main0}
Let $X$ be a contractible special mm-space (Definition \ref{defn:special}). Suppose:
\begin{itemize}
\item there exists a residually finite geometric cocompact lattice $\Lambda < \Isom(X)$ and $b^{(2)}_d(\Lambda)>0$;
\item there exists an $\epsilon>0$ such that every ball of radius $\le 10\epsilon$ in $X$  is strongly convex.
\end{itemize}
Then there exists $r>0$ such that $I_r(X|\cG_d)>0$ where $\cG_d$ is as in Theorem \ref{thm:main-manifold}.
\end{thm}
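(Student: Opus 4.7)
The plan is to prove the contrapositive: assuming $I_r(X|\cG_d) = 0$ for every $r > 0$, I will deduce $b^{(2)}_d(\Lambda) = 0$. A diagonal argument produces a sequence of geometric subgroups $\Gamma_i < \Isom(X)$ isomorphic to groups in $\cG_d$, together with compact pathwise-connected subsets $M_i \subset X/\Gamma_i$ with $0 < \vol(M_i) \le \vol(X/\Gamma_i)/2$, such that $\vol(N_r(\partial M_i))/\vol(M_i) \to 0$ for every fixed $r > 0$. Since $\cG_d$ is hereditary under subgroups and each $\Gamma_i$ is residually finite, I replace $\Gamma_i$ with a sufficiently deep finite-index subgroup $\Gamma'_i$ (still in $\cG_d$) and lift $M_i$ to a pathwise-connected subset $M'_i \subset X/\Gamma'_i$ arranging that $\covrad(M'_i | X/\Gamma'_i) \to \infty$ while the boundary-volume ratios descend to the lift.

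Next I invoke the unimodularity machinery. By Lemma \ref{lem:X} applied to the cocompact lattice $\Lambda$, there is a unique unimodular $\mu_X \in \cM_1(\M)$ supported on pointed isomorphism classes isometric to $X$; Lemma \ref{lem:X2} then gives $\mu_{M'_i} \to \mu_X$. The strong-convexity and ball-volume hypotheses of Theorem \ref{thm:Elek} are inherited from $X$ on the bulk $M'_i \setminus N_{20\epsilon}(\partial M'_i)$, whose volume is asymptotic to $\vol(M'_i)$, so the theorem applies and $\lim_i b_d(M'_i)/\vol(M'_i)$ exists. Since this limit depends only on the Benjamini--Schramm limit, applying the same reasoning to $\{X/\Lambda_n\}$ for a descending chain of finite-index normal subgroups $\Lambda_n \vartriangleleft \Lambda$ with trivial intersection (supplied by residual finiteness of $\Lambda$) and combining with Lück's Theorem \ref{thm:Luck} identifies the common value as $b^{(2)}_d(\Lambda)/\vol(X/\Lambda)$, which is positive by hypothesis. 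Note that the boundary condition forces $\vol(M'_i) \to \infty$: bounded volume and the uniform ball-volume lower bound would give bounded diameter and hence a nonvanishing boundary ratio for some large $r$.

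The main obstacle is producing a contradicting upper bound. Let $\Gamma''_i < \Gamma'_i$ be the image of $\pi_1(M'_i)$; since $M'_i$ is compact this is finitely generated, and as a subgroup of a group in $\cG_d$ it lies in $\cG_d$. Because $X$ is contractible, $X/\Gamma''_i$ is a $K(\Gamma''_i, 1)$, and the inclusion $M'_i \hookrightarrow X/\Gamma'_i$ lifts to an embedding $\widetilde{M'_i} \hookrightarrow X/\Gamma''_i$ (lifts of embeddings along covers remain embeddings). Using the strong form of the $\cG_d$ condition (``for every finite-index normal $N_0$ there exists a deeper normal subgroup with arbitrarily small normalized Betti number'') combined with residual finiteness of $\Gamma''_i$, for each $i$ I select a finite-index normal $N_i \vartriangleleft \Gamma''_i$ with both $b_d(N_i)/[\Gamma''_i:N_i] < 1/i$ and the injectivity radius of $X/N_i$ exceeding $i$. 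The preimage $Q_i \subset X/N_i$ of $\widetilde{M'_i}$ is a connected $[\Gamma''_i:N_i]$-fold cover of $M'_i$ (since the surjection $\pi_1(M'_i) \twoheadrightarrow \Gamma''_i$ acts transitively on $\Gamma''_i/N_i$). The injectivity-radius condition together with preservation of boundary ratios under the cover lets Lemma \ref{lem:X2} and Theorem \ref{thm:Elek} apply to $\{Q_i\}$, yielding $\lim_i b_d(Q_i)/\vol(Q_i) = b^{(2)}_d(\Lambda)/\vol(X/\Lambda) > 0$. On the other hand, Mayer--Vietoris in $X/N_i = K(N_i, 1)$ for the decomposition $(Q_i,\, \overline{X/N_i \setminus Q_i})$ yields $b_d(Q_i) \le b_d(N_i) + b_d(\partial Q_i)$; the boundary Betti number is controlled by nerve-complex combinatorics of $N_r(\partial M'_i)$ scaled by $[\Gamma''_i:N_i]$, so dividing by $\vol(Q_i) = [\Gamma''_i:N_i]\vol(M'_i)$ gives $b_d(Q_i)/\vol(Q_i) \lesssim 1/(i\vol(M'_i)) + \vol(N_r(\partial M'_i))/(r\vol(M'_i)) \to 0$, the desired contradiction. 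The technical heart is the boundary estimate: informal ``simplex-count'' arguments must be replaced by nerve-complex estimates (in the spirit of the proof of Theorem \ref{thm:Elek}) that are valid in the general metric-measure setting and behave well under finite covers.
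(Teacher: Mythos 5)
Your overall strategy matches the paper's: prove the contrapositive, pass to finitely generated subgroups and deep finite-index covers to obtain connected $Q_i$ with large injectivity radius, control both terms via Benjamini--Schramm convergence and Theorem~\ref{thm:Elek} on one side and $\cG_d$ plus Mayer--Vietoris on the other. The choice of $N_i \vartriangleleft \Gamma''_i$ simultaneously achieving small $b_d(N_i)/[\Gamma''_i:N_i]$ and large injectivity radius (first choose a subgroup killing the finitely many short conjugacy classes, then go deeper using the definition of asymptotically vanishing lower Betti number) is exactly Lemma~\ref{lem:injrad}.

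The gap is in the step ``Mayer--Vietoris for the decomposition $(Q_i,\, \overline{X/N_i \setminus Q_i})$ yields $b_d(Q_i)\le b_d(N_i)+b_d(\partial Q_i)$.'' Here $Q_i$ is a lift of an arbitrary compact path-connected subset of an mm-space, so $\partial Q_i$ is a topological frontier with no \emph{a priori} control on $b_d(\partial Q_i)$ (it could be infinite), and the closed cover $(Q_i,\,\overline{X/N_i\setminus Q_i})$ need not be excisive. The assertion that ``the boundary Betti number is controlled by nerve-complex combinatorics of $N_r(\partial M'_i)$'' conflates two different objects: Betti numbers of a topological frontier versus Betti numbers of a finite simplicial complex. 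The paper avoids this by never running Mayer--Vietoris on spaces at all: it replaces $M''_i$ by an open union of $10\epsilon$-balls $U_i$, extends to a locally finite ball cover of all of $X/\Gamma''_i$, and runs Mayer--Vietoris on the \emph{nerve complex} $K_i=K^U_i\cup K^V_i$. The intersection $K^U_i\cap K^V_i$ is then a finite simplicial complex with vertex degree $\le\Delta=v_1/v_0$, so its $d$-th Betti number is $\le\binom{\Delta}{d}|S'_i|$ where $S'_i$ is the (explicitly volume-controlled) set of boundary vertices. The Nerve Theorem plus strong convexity of the balls transfers Betti numbers from the nerve complexes to the spaces. The same thickening-to-balls construction is also what makes the hypotheses of Theorem~\ref{thm:Elek} (the uniform lower bound on $\vol(B(p,\epsilon/2))$ for \emph{every} $p$) verifiable; applying Theorem~\ref{thm:Elek} to $Q_i$ directly, or hand-waving at ``the bulk,'' does not meet the quantifier ``for every $p \in M_i$.'' So the missing mechanism---replace subsets by ball unions and spaces by their nerve complexes before invoking Mayer--Vietoris and Theorem~\ref{thm:Elek}---is precisely the technical content of the paper's proof (the $U_i,V_i,W_i,K^U_i,K^V_i$ constructions), and it is the part your proposal leaves to ``informal simplex-count arguments.''
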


\begin{example}
Let $d>2$, $T_d$ denote the $d$-regular tree and $X_d=T_d \times T_d$. We could consider $T_d$ to be a metric measure space by making each edge isomorphic with the unit interval. Then let $\vol_{X_d}=\vol_{T_d}\times \vol_{T_d}$ and set $\dist_{X_d}$ equal to the sum of the distances of its coordinate projections. This makes $X_d$ into a CAT(0) space and therefore every ball is strongly convex. Moreover, $\Isom(X_d)$ equals the automorphism group of $T_d \times T_d$ as a cell-complex.

Because every lattice $\Lambda < \rm{Aut}(T_d \times T_d)$ has $b^{(2)}_2(\Lambda)>0$, it follows from Theorem \ref{thm:main0} that $I_r(X_d|\cG_2)>0$ for some $r>0$. The second $L^2$-Betti numbers of free groups vanish. So $h_r(X_d/\Gamma)\ge I_r(X_d|\cG_2)>0$ for any free group $\Gamma < \Isom(X_d)$. 
\end{example}

The next lemma shows that by passing to a subgroup $\Gamma''_i <\Gamma_i$ we may substantially simplify the problem. We will need the following definition:
\begin{defn}[Asymptotic lower Betti numbers]
Let $\Gamma$ be a residually finite countable group and $d \ge 1$ an integer. Let
$$\widehat{b}_d(\Gamma) = \liminf_N \frac{b_d(N)}{[\Gamma:N]}$$
where the limit is over the net of finite-index normal subgroups of $\Gamma$ ordered by reverse inclusion. Equivalently, $\widehat{b}_d(\Gamma)$ is the smallest number $x$ such that for every $\epsilon>0$ and every finite-index normal subgroup $N \vartriangleleft \Gamma$ there exists a finite-index normal subgroup $N' \vartriangleleft \Gamma$ with $N' < N$ and 
$$\left|x- \frac{b_d(N')}{[\Gamma:N']}\right|<\epsilon.$$
In the special case that $\Gamma$ has a finite classifying space, $\widehat{b}_d(\Gamma) = b^{(2)}_d(\Gamma)$ by Theorem \ref{thm:Luck}.
\end{defn}

\begin{lem}\label{lem:injrad}
Let $X$ be as in Theorem \ref{thm:main0}. Let $\{\Gamma_i\}_{i=1}^\infty$ be a sequence of geometric residually finite subgroups $\Gamma_i < \Isom(X)$ such that $\lim_{i\to \infty} h_r(X/\Gamma_i) =0$ for every $r>0$.  

Then there exist subgroups $\Gamma''_i<\Gamma'_i<\Gamma_i$ and positive volume compact subsets $M'_i \subset X/\Gamma'_i, M''_i \subset X/\Gamma''_i$ such that 
\begin{enumerate}
\item $M''_i$ is a pathwise connected compact subset of $X/\Gamma''_i$ for each $i$;
\item $\lim_{i\to\infty} \frac{\vol(N_r(\partial M''_i))}{\vol(M''_i)} = 0$ for every $r$;
\item $\lim_{i\to\infty} \covrad(M''_i | X/\Gamma''_i ) = \infty$;
\item 
$$\liminf_{i\to\infty} \frac{\widehat{b}_d(\Gamma'_i) }{\vol(M'_i)} = \liminf_{i\to\infty} \frac{b_d(\Gamma''_i) }{\vol(M''_i)}.$$
\end{enumerate}
\end{lem}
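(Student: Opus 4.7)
The plan is to construct the nested subgroups $\Gamma''_i<\Gamma'_i<\Gamma_i$ and subsets $M'_i,M''_i$ in three stages: first a diagonal extraction of Cheeger witnesses $M_i\subset X/\Gamma_i$; second a choice of $\Gamma'_i$ designed so that $M_i$ lifts isometrically to $M'_i$ and $\pi_1(M'_i)\twoheadrightarrow\Gamma'_i$; third a deep normal subgroup $\Gamma''_i\vartriangleleft\Gamma'_i$ making the injectivity radius on $M''_i$ large while approximating $\widehat{b}_d(\Gamma'_i)$ by ordinary Betti densities.

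Because $h_r(X/\Gamma_i)\to 0$ for each $r>0$, for each $i$ one may pick a number $r_i$ with $r_i\to\infty$ and a pathwise-connected compact $M_i\subset X/\Gamma_i$ of positive volume such that $\vol(N_{r_i}(\partial M_i))/\vol(M_i)\to 0$; monotonicity of $r\mapsto\vol(N_r(\partial M))$ then yields $\vol(N_r(\partial M_i))/\vol(M_i)\to 0$ for each fixed $r$. Fix $*_i\in M_i$ and let $\Gamma'_i<\Gamma_i$ be the subgroup generated by the image of $\pi_1(M_i,*_i)$ under $M_i\hookrightarrow X/\Gamma_i$. Since $\pi_1(M_i)\subseteq\Gamma'_i$, each connected component of the preimage of $M_i$ under $X/\Gamma'_i\to X/\Gamma_i$ maps isometrically onto $M_i$; pick one such component and call it $M'_i$. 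Then $\vol(M'_i)=\vol(M_i)$, $M'_i$ is pathwise connected and compact, and the induced map $\pi_1(M'_i)\to\Gamma'_i$ is surjective by construction.

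Fix a sequence $R_i\to\infty$ and a compact lift $\widetilde{M'_i}\subset X$ of $M'_i$. The set $F_i:=\{\gamma\in\Gamma'_i\setminus\{e\}:d_X(\tilde p,\gamma\tilde p)\le 2R_i\text{ for some }\tilde p\in\widetilde{M'_i}\}$ is finite by proper discontinuity of the $\Gamma'_i$-action on $X$. Residual finiteness of $\Gamma'_i$ (inherited from $\Gamma_i$) provides a finite-index normal $K_i\vartriangleleft\Gamma'_i$ disjoint from $F_i$, and the defining property of $\widehat{b}_d(\Gamma'_i)$ then yields a further finite-index normal $\Gamma''_i\vartriangleleft\Gamma'_i$ with $\Gamma''_i\subseteq K_i$ and
\[ \left|\widehat{b}_d(\Gamma'_i)-\frac{b_d(\Gamma''_i)}{[\Gamma'_i:\Gamma''_i]}\right|<\frac{\vol(M'_i)}{i}. \]
Let $M''_i$ be the preimage of $M'_i$ under the regular cover $X/\Gamma''_i\to X/\Gamma'_i$. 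The surjection $\pi_1(M'_i)\twoheadrightarrow\Gamma'_i\twoheadrightarrow\Gamma'_i/\Gamma''_i$ forces this preimage to be connected, establishing (1). Normality $\Gamma''_i\vartriangleleft\Gamma'_i$ together with $\Gamma''_i\cap F_i=\emptyset$ yields $d_X(\tilde q,\gamma^{-1}\gamma''\gamma\tilde q)>2R_i$ for every $\gamma\in\Gamma'_i$, $\gamma''\in\Gamma''_i\setminus\{e\}$, $\tilde q\in\widetilde{M'_i}$, so $\covrad(M''_i|X/\Gamma''_i)>R_i\to\infty$, giving (3). Finally $\vol(M''_i)=[\Gamma'_i:\Gamma''_i]\vol(M'_i)$ combined with the displayed estimate gives $|b_d(\Gamma''_i)/\vol(M''_i)-\widehat{b}_d(\Gamma'_i)/\vol(M'_i)|<1/i$, which implies (4).

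The main obstacle is the verification of (2). Once the covering radius of $X/\Gamma''_i\to X/\Gamma'_i$ at $M''_i$ exceeds $r$, the regular cover is a local isometry there and $\vol(N_r(\partial M''_i))/\vol(M''_i)=\vol(N_r(\partial M'_i))/\vol(M'_i)$, reducing (2) to an analogous estimate in $X/\Gamma'_i$. The delicate point is that $X/\Gamma'_i\to X/\Gamma_i$ need not be an isometry on the $r$-neighborhood of $M'_i$, since other components of the preimage of $M_i$ can accumulate near $M'_i$. I would handle this by enlarging $F_i$ to also forbid elements of $\Gamma_i\setminus\Gamma'_i$ whose translation near $\widetilde{M_i}$ is small and correspondingly enlarging $\Gamma'_i$ to swallow them while preserving $\pi_1$-surjectivity through a slight thickening of $M_i$ that maintains the asymptotic Cheeger ratio; alternatively, one can bound the multiplicity of the map $N_r(\partial M'_i)\to N_r(\partial M_i)$ by a constant depending only on $r$ and absorb it into the diagonalization in Step~1.
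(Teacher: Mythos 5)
Your construction mirrors the paper's proof step by step: diagonalize to get Cheeger witnesses $M_i$, set $\Gamma'_i = \textrm{im}(\pi_1(M_i)\to\Gamma_i)$ and take the homeomorphic lift $M'_i$, then use residual finiteness and the definition of $\widehat{b}_d$ to pick a deep finite-index normal $\Gamma''_i\vartriangleleft\Gamma'_i$ avoiding a finite set of short elements, and set $M''_i=\psi_i^{-1}(M'_i)$. Your verification of (1), (3) and (4) is correct and matches the paper (the paper works with $\Gamma'_i$-conjugacy classes and a length function $L_i$ where you work with elements and a compact set $\widetilde{M'_i}$; these are interchangeable). One small inaccuracy: not \emph{every} component of $\phi_i^{-1}(M_i)$ maps homeomorphically to $M_i$—only the component whose double coset in $\Gamma'_i\backslash\Gamma_i/\Gamma'_i$ is trivial—but since you pick a single component this is harmless.

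The step you flag as delicate in (2) is exactly the step the paper itself asserts without argument, namely $\lim_i \vol(N_r(\partial M'_i))/\vol(M'_i)=0$. Your diagnosis, however, misidentifies the difficulty. The inclusion $\phi_i(\partial M'_i)\subset\partial M_i$ (hence $\phi_i(N_r(\partial M'_i))\subset N_r(\partial M_i)$) is automatic and is \emph{not} threatened by other components accumulating near $M'_i$: if $p\in M'_i$ and $\phi_i(p)\in\textrm{int}(M_i)$, any connected evenly-covered neighbourhood $U\ni p$ with $\phi_i(U)\subset\textrm{int}(M_i)$ lies in $\phi_i^{-1}(M_i)$ and meets the component $M'_i$, so $U\subset M'_i$ and $p\in\textrm{int}(M'_i)$. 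The genuine obstruction is that $\phi_i$ need not be \emph{injective} on the outer part $N_r(\partial M'_i)\setminus M'_i$, so containment of images alone does not give $\vol(N_r(\partial M'_i))\le\vol(N_r(\partial M_i))$. Neither of your proposed repairs closes this. The first (enlarging $\Gamma'_i$ to swallow offending deck transformations) is circular, since the offending elements of $\Gamma_i$ depend on $\Gamma'_i$. The second (a multiplicity bound depending only on $r$) is unjustified: the multiplicity is governed by how many $\Gamma_i$-translates of a lift of a point of $\partial M'_i$ come within $2r$ of it, and the injectivity radius of $X/\Gamma'_i$ along $\partial M'_i$ can degenerate as $i\to\infty$, so there is no a priori uniform bound. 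To make the argument airtight you would either need to prove such a multiplicity bound, or observe that $\phi_i$ \emph{is} injective on the inner collar $N_r(\partial M'_i)\cap M'_i$—which gives $\vol(N_r(\partial M'_i)\cap M'_i)\le\vol(N_r(\partial M_i))$ for free—and check that this inner-collar version of (2) suffices where (2) is consumed downstream in Lemma~\ref{lem:X2} and the proof of Theorem~\ref{thm:main0}.
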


\begin{proof}

By hypothesis, there exist path-connected positive volume compact sets $M_i \subset X/\Gamma_i$ such that 
$$\lim_{i\to\infty} \frac{ \vol( N_r(\partial M_i))}{\vol(M_i)}=0$$
for every $r>0$ where we have dropped the subscript on $\vol_{X/\Gamma_i}(\cdot)$ for simplicity. 

Because $X$ is contractible and $\Gamma_i$ acts freely and properly discontinuously, we may identify $\Gamma_i$ with the fundamental group $\pi_1(X/\Gamma_i)$. Let $\Gamma'_i<\Gamma_i$ be the image of $\pi_1(M_i)$ under the natural map from $\pi_1(M_i) \to \pi_1(X/\Gamma_i)$ induced by inclusion $M_i \to X/\Gamma_i$. Let $\phi_i: X/\Gamma'_i \to X/\Gamma_i$ be the covering map and let $M'_i$ be a path-connected component of $\phi_i^{-1}(M_i)$. The choice of $\Gamma'_i$ implies that $\phi_i$ restricted to $M'_i$ is a homeomorphism onto $M_i$. So $M'_i$ is compact and 
\begin{eqnarray}\label{eqn:aa}
\lim_{i\to\infty} \frac{ \vol( N_r(\partial M'_i))}{\vol(M'_i)}=0
\end{eqnarray}
for every $r>0$.

For each $\gamma \in \Gamma'_i$, let $L_i(\gamma)$ denote the infimum over all numbers $r$ such that there is a $p \in X$ whose image in $X/\Gamma'_i$ is contained in $M'_i$ and $\dist_X(p,\gamma p)\le r$. This number depends only on the conjugacy class of $\gamma$ in $\Gamma'_i$. So we may think of $L_i$ as a function on the set of conjugacy classes of $\Gamma'_i$.

 Let $r>0$. We claim that there are only a finite number of $\Gamma'_i$-conjugacy classes $[\gamma]$ with $L_i([\gamma])\le r$. To obtain a contradiction, suppose $\gamma_1,\gamma_2,\ldots \in \Gamma'_i$ is an infinite sequence of pairwise non-conjugate elements with $L_i(\gamma_i) \le r$. Let $p_i \in X$ be such that the image of $p_i$ in $X/\Gamma'_i$ is in $M'_i$ and $\dist_X(p_i,\gamma_i p_i) \le r$. Let $Y_i \subset X$ be a compact set which surjects onto $M'_i$ under the covering map $X \mapsto X/\Gamma'_i$. After conjugating $\gamma_i$ if necessary, we may assume that $p_i \in Y_i$ for all $i$. After passing to a subsequence if necessary, we may assume $\lim_{i\to\infty} p_i = p_\infty$ and $\lim_{i\to\infty} \gamma_i p_i = q_\infty$ exist. It follows that $\lim_{i,j\to\infty} \gamma_j \gamma_i^{-1} q_\infty = q_\infty$. This contradicts the assumption that $\Gamma_i$ acts properly discontinuously and freely on $X$. So there are only a finite number of $\Gamma'_i$-conjugacy classes $[\gamma]$ with $L_i([\gamma])\le r$ as claimed.
  
  Because $\Gamma_i$ is residually finite, $\Gamma'_i$ is also residually finite. So there is a finite index normal subgroup $\Gamma''_i<\Gamma'_i$ such that $\Gamma''_i$ does not contain any nontrivial element $\gamma \in \Gamma'_i$ with $L_i(\gamma)\le i$.  We may choose $\Gamma''_i$ to also satisfy
  $$\left| \frac{b_d(\Gamma''_i)}{[\Gamma'_i:\Gamma''_i]} - \widehat{b}_d(\Gamma'_i)\right| < \frac{\vol(M'_i)}{i}.$$
  This implies
\begin{eqnarray}\label{eqn:ab}
\liminf_{i\to\infty} \frac{\widehat{b}_d(\Gamma'_i) }{\vol(M'_i)} = \liminf_{i\to\infty} \frac{b_d(\Gamma''_i) }{[\Gamma'_i:\Gamma''_i] \vol(M'_i)}.
\end{eqnarray}
   Let $\psi_i : X/\Gamma''_i \to X/\Gamma'_i$ be the quotient map and let $M''_i = \psi_i^{-1}(M'_i)$. Because $\pi_1(M_i)$ surjects onto $\Gamma'_i$ (under the natural map from $\pi_1(M_i) \to \pi_1(X/\Gamma_i)$), it follows that $\pi_1(M'_i)$ also surjects onto $\pi_1(X/\Gamma'_i) \simeq \Gamma'_i$. This implies $M''_i$ is path-connected. 
   
      Note $\covrad(M''_i| X/\Gamma''_i ) \ge i/2$. So $\lim_{i\to\infty} \covrad(M''_i| X/\Gamma''_i )  = \infty$.
 
 The restriction of $\psi_i$ to $N_r(M''_i)$ is a finite-degree covering map onto $N_r(M'_i)$. So 
 $$\vol(N_r(\partial M''_i)) = [\Gamma'_i:\Gamma''_i] \vol(N_r(\partial M'_i)), \quad \vol(M''_i)= [\Gamma'_i:\Gamma''_i] \vol(M'_i).$$
 Now (\ref{eqn:aa},\ref{eqn:ab}) imply $\lim_{i\to\infty}  \frac{\vol(N_r(\partial M''_i))}{\vol(M''_i)} = 0$ and
 $$\liminf_{i\to\infty} \frac{\widehat{b}_d(\Gamma'_i) }{\vol(M'_i)} = \liminf_{i\to\infty} \frac{b_d(\Gamma''_i) }{\vol(M''_i)}.$$
  \end{proof}

\begin{proof}[Proof of Theorem \ref{thm:main0}]

Let $\{\Gamma_i\}_{i=1}^\infty$ be a sequence of geometric residually finite subgroups $\Gamma_i < \Isom(X)$ such that $\lim_{i\to \infty} h_r(X/\Gamma_i) =0$ for every $r>0$.   Let $\Gamma''_i<\Gamma'_i<\Gamma_i$, $M'_i \subset X/\Gamma'_i, M''_i \subset X/\Gamma''_i$ be as Lemma \ref{lem:injrad}. It suffices to show that for all but finitely $i$, $\widehat{b}_d(\Gamma'_i)>0$. 
 



By hypothesis, there exists an $\epsilon>0$ such that every ball of radius $\le 10\epsilon$ in $X$  is strongly convex. 
For sufficiently large $i$, $ \covrad(M''_i| X/\Gamma''_i ) >10\epsilon$ which implies that each ball of radius $\le 10\epsilon$ with center in $N_{10\epsilon}(M''_i)$ is strongly convex. Moreover, for any $p \in X$ there is some $r>0$ such that $B_X(p,r)$ maps isometrically onto the ball of radius $r$ centered at the image of $p$ in $X/\Gamma''_i$. This is because $\Gamma''_i$ acts properly discontinuously and freely (because $\Gamma_i$ does and $\Gamma''_i<\Gamma_i$). So for every $q \in X/\Gamma''_i$ there is some number $\kappa(q)$ such that every ball of radius $\le \kappa(q)$ centered at $q$ is strongly convex.

Let $S_i \subset X/\Gamma''_i$ be a set and $\rho:S_i \to (0,\infty)$ be a function such that
\begin{itemize}
\item $S_i \cap M''_i$ is $\epsilon$-separated and $10\epsilon$-covers $M''_i$;
\item $\rho(s) = 10\epsilon$ for every $s \in S_i \cap M''_i$;
\item $\rho(s) \le 10\epsilon$ for all $s\in S_i$;
\item $B_{X/\Gamma''_i}(s, r )$ is strongly convex for every $s\in S_i$ and $r\le \rho(s)$;
\item $\{ B^o_{X/\Gamma''_i}(s, \rho(s)) :~s\in S_i\}$ is locally finite and covers $X/\Gamma''_i$.
\end{itemize}
Let
\begin{eqnarray*}
U_i &=& \bigcup \{ B_{X/\Gamma''_i}^o(s, \rho(s) ):~ s\in S_i \cap M''_i \}.
\end{eqnarray*}
Observe that $M''_i \subset U_i$ and $U_i$ is pathwise connected (because $M''_i$ is pathwise connected). 
Because $S_i \cap M''_i$ is finite we can choose $0<\delta_i<\epsilon$ so that if 
$$U'_i:=\bigcup \{ B_{X/\Gamma''_i}(s, \rho(s) - \delta_i ):~ s\in S_i \cap M''_i\}$$
then $U'_i$ is homologically equivalent to $U_i$ (in the sense that they have the same Betti numbers), $\lim_{i\to\infty} \frac{\vol(U'_i)}{\vol(U_i)} = 1$ and $M''_i \subset U'_i$. In particular $U'_i$ is pathwise connected. Note $U'_i$ is closed while $U_i$ is open. For simplicity we have dropped the subscript on $\vol_{X/\Gamma''_i}(\cdot)=\vol(\cdot)$.

Because $M''_i \subset U'_i \subset N_{10\epsilon}(M''_i)$ it follows that \begin{itemize}
\item $\lim_{i\to\infty} \covrad(U'_i|X/\Gamma_i)= +\infty$ and 
\item $\limsup_{i\to\infty} \frac{ \vol(N_r(\partial U'_i))}{\vol(U'_i)} \le \limsup_{i\to\infty} \frac{ \vol(N_{r+10\epsilon}(\partial M''_i))}{\vol(M''_i)}= 0$ for every $r>0$.
\end{itemize}
Let $p_i$ be a uniformly random point of $U'_i$ and let $\mu_i=\Law(U'_i,p_i) \in \cM_1(\M)$.  By Lemma \ref{lem:X2}, $\lim_{i\to\infty} \mu_i = \mu_\infty$ is the unique unimodular measure supported on pointed isomorphism classes of metric measure spaces that are isomorphic with $X$.

To apply Theorem \ref{thm:Elek} (to $U'_i$) we need to check a few more hypotheses. We claim that there is a $v_0>0$ such that for every $p,q \in X$ if $\dist_X(p,q) \le 10\epsilon-\delta_i$ then $\vol(B_X(q,\epsilon/2) \cap B_X(p,10\epsilon-\delta_i)) >v_0$. If this is false then there are sequences $\{p_j\}_{j=1}^\infty, \{q_j\}_{j=1}^\infty \subset X$ and $\{i_j\}_{j=1}^\infty \subset \N$ such that $\dist_X(p_j,q_j) \le 10\epsilon-\delta_{i_j}$ and $\lim_{j\to\infty} \vol(B_X(q_j,\epsilon/2) \cap B_X(p_j,10\epsilon-\delta_{i_j})) =0$. Let $D \subset X$ be a compact set that surjects onto $X/\Lambda$. By replacing $p_j,q_j$ with $g_jp_j, g_jq_j$ for some $g_j \in \Lambda$ if necessary, we may assume that each $p_j \in D$. After passing to a subsequence if necessary, we may assume $\lim_{j\to\infty} p_j = p_\infty, \lim_{j\to\infty} q_j = q_\infty$ and $\lim_{j\to\infty} \delta_{i_j} =\delta_\infty \in [0,\epsilon]$ exist. Let $\eta>0$. For all sufficiently large $j$,
$$B_X(q_\infty,\epsilon/2-\eta) \cap B_X(p_\infty,10\epsilon-\delta_\infty-\eta) \subset B_X(q_j,\epsilon/2) \cap B_X(p_j,10\epsilon-\delta_{i_j}).$$
This implies $\vol_X( B^o_X(q_\infty,\epsilon/2) \cap B^o_X(p_\infty,10\epsilon-\delta_\infty) )=0$. However, $B_X(p_\infty, 10\epsilon-\delta_\infty)$ is strongly convex and so there is a geodesic from $p_\infty$ to $q_\infty$ in $B_X(p_\infty, 10\epsilon-\delta_\infty)$. It follows that $B^o_X(q_\infty,\epsilon/2) \cap B^o_X(p_\infty,10\epsilon-\delta_\infty)$ is a nonempty open set. Since $\vol_X$ is fully supported (because $X$ is special), this is a contradiction. This proves the claim. Note that $v_0$ does not depend on $i$.

If $i$ is sufficiently large, then $\covrad(U'_i| X/\Gamma''_i) > 10\epsilon$ which implies that every $(10\epsilon-\delta_i)$-ball in $X/\Gamma''_i$ which lies in $U'_i$ is isometric with a $(10\epsilon-\delta_i)$-ball in $X$. Therefore, for every $q_i \in U'_i$, $\vol( B_{X/\Gamma''_i}(q_i,\epsilon/2) \cap U'_i) = \vol(B_{U'_i}(q_i,\epsilon/2))> v_0$. Also because $X/\Lambda$ is compact, there is a $v_1>0$ such that $B_X(x,20\epsilon)<v_1$ for every $x\in X$. This implies $B_{U'_i}(q_i,20\epsilon)<v_1$ too. The hypotheses of Theorem \ref{thm:Elek} have now been checked. That result implies $\lim_{i\to\infty} \frac{b_d(U'_i)}{\vol(U'_i)}$ exists. 


Because $\Lambda$ is residually finite, there exists a decreasing sequence $\{\Lambda_i\}_{i=1}^\infty$ of finite-index normal subgroups of $\Lambda$ such that $\cap_{i=1}^\infty \Lambda_i = \{e\}$. Note that the covering radius of $X/\Lambda_i$ tends to infinity as $i\to\infty$. So Lemma \ref{lem:X2} implies $\lim_{i\to\infty} \mu_{X/\Lambda_i}  = \mu_\infty$. Theorem \ref{thm:Elek} now implies
$$\lim_{i\to\infty} \frac{b_d(U'_i)}{\vol(U'_i)} = \lim_{i\to\infty} \frac{b_d(X/\Lambda_i) }{\vol(X/\Lambda_i)}.$$
Because $X$ is contractible, $X/\Lambda_i$ is a classifying space for $\Lambda_i$, which implies $b_d(X/\Lambda_i) = b_d(\Lambda_i)$. Because $X/\Lambda_i$ is a $[\Lambda:\Lambda_i]$-fold cover of $X/\Lambda$, it follows that $\vol(X/\Lambda_i) = [\Lambda:\Lambda_i]\vol(X/\Lambda)$.  By Theorem \ref{thm:Luck}.
$$\lim_{i\to\infty} \frac{b_d(X/\Lambda_i) }{\vol(X/\Lambda_i)} = \lim_{i\to\infty} \frac{b_d(\Lambda_i) }{[\Lambda:\Lambda_i]\vol(X/\Lambda)} = \frac{b^{(2)}_d(\Lambda)}{\vol(X/\Lambda)}.$$
So we have established:
\begin{eqnarray*}
\lim_{i\to\infty} \frac{b_d(U'_i)}{\vol(U'_i)} = \frac{b^{(2)}_d(\Lambda)}{\vol(X/\Lambda)}.
\end{eqnarray*}
Because $U'_i$ is homologically equivalent to $U_i$ and $\lim_{i\to\infty} \frac{\vol(U'_i)}{\vol(U_i)} = 1$, we have
\begin{eqnarray}\label{eqn:1}
\lim_{i\to\infty} \frac{b_d(U_i)}{\vol(U_i)} = \frac{b^{(2)}_d(\Lambda)}{\vol(X/\Lambda)}.
\end{eqnarray}

Let 
\begin{eqnarray*}
W_i &=& \bigcup \{ B_{X/\Gamma''_i}^o(s, \rho(s) ):~s\in S_i \setminus M''_i \}\\
S^V_i &=& (S_i \setminus M''_i) \cup \{ s\in S_i \cap M''_i:~ B^o_{X/\Gamma''_i}(s,\rho(s)) \cap W_i \ne \emptyset\} \\
V_i &=& \bigcup \{ B_{X/\Gamma''_i}^o(s, \rho(s) ):~s\in S^V_i \}.
\end{eqnarray*}

Let $K_i$ be the nerve complex of $\{B^o_{X/\Gamma''_i}(s, \rho(s)):~s\in S_i\}$. Let $K^U_i \subset K_i$ be the nerve complex of $\{B^o_{X/\Gamma''_i}(s,\rho(s)):~s\in S_i \cap M''_i\}$. Similarly, let $K^V_i \subset K_i$ be the nerve complex of $\{B^o_{X/\Gamma''_i}(s, \rho(s) ):~s\in S^V_i \}$.

Because each $B^o_{X/\Gamma''_i}(s,\rho(s))$ is strongly convex (for $s\in S_i$), it follows that any nonempty intersection of such balls is also strongly convex and is therefore contractible \cite{Ro70}. By \cite[Corollary 4G.3]{Ha02}, this implies $K_i$ is homotopic to $X/\Gamma''_i$, $K^U_i$ is homotopic to $U_i$ and $K^V_i$ is homotopic to $V_i$. Therefore, $b_d(K_i) = b_d(X/\Gamma''_i) = b_d(\Gamma''_i)$ (since $X/\Gamma''_i$ is a classifying space for $\Gamma''_i$ since $X$ is contractible), $b_d(K^U_i)=b_d(U_i)$ and $b_d(K^V_i)=b_d(V_i)$.



We claim that $K^U_i \cup K^V_i = K_i$. To see this suppose $T \subset S_i$ spans a simplex in $K_i$. Then either $T \subset K^U_i$ or there exists $s\in T \setminus M''_i$. For any $t \in T$, $B_{X/\Gamma''_i}^o(t,\rho(t)) \cap B_{X/\Gamma''_i}^o(s,\rho(s))  \ne \emptyset$. Since $B_{X/\Gamma''_i}^o(s,\rho(s)) \subset W_i$, this implies $t \in S^V_i$. Since $t$ is arbitrary, the simplex spanning $T$ is contained in $K^V_i$. Since $T$ is arbitrary, $K^U_i \cup K^V_i = K_i$.

The Mayer-Vietoris sequence
$$\cdots \to H_d(K^U_i \cap K^V_i) \to H_d(K^U_i) \oplus H_d(K^V_i) \to H_d(K_i) \to \cdots$$
implies
\begin{eqnarray}\label{eqn:2}
b_d(U_i) = b_d(K^U_i) \le b_d(K_i) + b_d(K^U_i \cap K^V_i) =  b_d(\Gamma''_i) + b_d(K^U_i \cap K^V_i).
\end{eqnarray}
If $s \in M''_i$ and $Z \subset B_{M''_i}(s,20\epsilon)$ is any $\epsilon$-separated subset then because 
$$v_1>\vol(B_{M''_i}^o(q,20\epsilon)) \ge \vol(B^o_{M''_i}(q,\epsilon/2)) > v_0>0$$
for every $q\in M''_i$, we must have $v_0|Z| \le v_1$. So $|Z| \le v_1/v_0$. So setting $\Delta:=v_1/v_0$, we see that the degree of any vertex of $K^U_i$ is at most $\Delta$. So $b_d(K^U_i \cap K^V_i)$ is at most the number of $d$-simplices in $K^U_i \cap K^V_i$ which is at most the number of vertices of $K^U_i \cap K^V_i$ multiplied by ${\Delta \choose d}$. The vertex set of $K^U_i \cap K^V_i$ is $S'_i= \{ s\in S_i \cap M''_i:~ B^o_{X/\Gamma''_i}(s,\rho(s)) \cap W_i \ne \emptyset\}$.  So
$$b_d(K^U_i \cap K^V_i) \le |S'_i| {\Delta \choose d}.$$

Note $S'_i$ is contained in the $20\epsilon$-neighborhood of $\partial M''_i$. Because $S'_i$ is $\epsilon$-separated and each $(\epsilon/2)$-ball has volume at least $v_0$ (for some $v_0>0$ independent of $i$), we have $|S'_i| v_0 \le \vol(N_{20\epsilon}(\partial M''_i))$. So
$$b_d(K^U_i \cap K^V_i) \le v_0^{-1} \vol(N_{20\epsilon}(\partial M''_i)) {\Delta \choose d}.$$
Therefore,
\begin{eqnarray*}
\limsup_{i\to\infty} \frac{b_d(K^U_i \cap K^V_i)}{\vol(U_i)}&\le&  {\Delta \choose d} v_0^{-1} \limsup_{i\to\infty} \frac{\vol(N_{20\epsilon}(\partial M''_i))}{\vol(U_i)}\\
&\le & {\Delta \choose d} v_0^{-1}\limsup_{i\to\infty} \frac{\vol(N_{20\epsilon}(\partial M''_i))}{\vol(M''_i)} = 0.
\end{eqnarray*}
Lemma \ref{lem:injrad}, the fact that $M''_i \subset U_i$ and equations (\ref{eqn:1},\ref{eqn:2}) and now imply
\begin{eqnarray*}\label{eqn:3}
\liminf_{i\to\infty} \frac{\widehat{b}_d(\Gamma'_i)}{\vol(M'_i)}& =&\liminf_{i\to\infty} \frac{b_d(\Gamma''_i)}{\vol(M''_i)} \ge  \liminf_{i\to\infty} \frac{b_d(\Gamma''_i)}{\vol(U_i)} \\
&\ge& \liminf_{i\to\infty} \frac{b_d(U_i)}{\vol(U_i)}  -  \frac{b_d(K^U_i \cap K^V_i)}{\vol(U_i)}=  \frac{b^{(2)}_d(\Lambda)}{\vol(X/\Lambda)}>0.
\end{eqnarray*}
So $\widehat{b}_d(\Gamma'_i)>0$ for all but finitely many $i$. This implies the theorem.
\end{proof}

We now turn to the proof of Theorem \ref{thm:main-manifold}. We will need the following lemma to smooth out the Cheeger submanifolds of $X/\Gamma$.
 
\begin{lem}[Hair-cutting Lemma]\label{lem:Buser}
Let $M$ be an infinite volume complete Riemannian $n$-manifold. Suppose there is a $\delta>0 $ such that the Ricci curvature of $M$ is at least $-\delta^2(n-1)$ (everywhere). Suppose as well that $h(M) < 1$. Then there exist a pathwise connected compact subset  $M'' \subset M$ and a function $f:\R_{>0} \to \R_{>0}$ such that for every $R>0$
\begin{eqnarray}\label{eqn:Buser}
 \frac{\textrm{vol}(N_R(\partial M''))}{\textrm{vol}(M'')} \le f(R) h(M).
 \end{eqnarray}
Moreover $f$ depends only on $\delta$ and $\textrm{dim}(M)$.
\end{lem}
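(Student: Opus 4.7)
The plan is to follow Buser's smoothing technique: start from a near-optimal Cheeger set, construct a geometrically tame replacement whose boundary has bounded mean curvature, and then use the Heintze--Karcher tube bound under a Ricci lower bound to get the desired inequality.

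First I would use the definition of $h(M)$ to pick a smooth compact submanifold $M_0 \subset M$ with $\vol(M_0) \le \vol(M)/2$ and $\area(\partial M_0)/\vol(M_0) \le 2h(M)$; averaging the ratio over the connected components of $M_0$ shows at least one component satisfies the same bound, so I may replace $M_0$ by that path-connected component. Next I would produce $M''$ from $M_0$ by smoothing at a scale $r_0 = r_0(\delta, n) > 0$ fixed once and for all. Concretely, set $\phi := \chi_{M_0} \ast \rho_{r_0}$ using geodesic convolution against a smooth radially symmetric bump of scale $r_0$, and let $M'' := \{\phi \ge c_\ast\}$ for a well-chosen regular value $c_\ast \in (1/4, 3/4)$. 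The Laplacian comparison $\Delta \dist(\cdot,\partial M_0) \le (n-1)\delta \coth(\delta s)$ on $\{\dist(\cdot,\partial M_0) = s\}$ (valid for $s > 0$), combined with the co-area formula for the distance function $\dist(\cdot,\partial M_0)$, bounds $\vol(N_{r_0}(\partial M_0))$ by $c_0(\delta, n) \area(\partial M_0)$. Since $\nabla \phi$ is supported in $N_{r_0}(\partial M_0)$ and is pointwise bounded in terms of $\rho_{r_0}$, co-area applied to $\phi$ yields $\int_0^1 \area(\{\phi = c\}) \, dc \le C(\delta, n) \area(\partial M_0)$. Combined with Sard's theorem and the explicit formula for the mean curvature of a level set in terms of $\nabla \phi$ and $\nabla^2 \phi$, one can then choose $c_\ast$ so that $\partial M''$ is smooth with $\area(\partial M'') \le C_1(\delta, n) \area(\partial M_0)$ and $|H_{\partial M''}| \le C_2(\delta, n)$.

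With $\partial M''$ smooth, mean curvature bounded and ambient Ricci $\ge -(n-1)\delta^2$, the Heintze--Karcher tube formula (a Riccati comparison on the Jacobian of the normal exponential map) gives $\vol(N_R(\partial M'')) \le g(R, \delta, n) \area(\partial M'')$. Combining,
\[
\vol(N_R(\partial M'')) \le g(R, \delta, n)\, C_1\, \area(\partial M_0) \le 2 g(R, \delta, n)\, C_1\, h(M)\, \vol(M_0).
\]
Since $\vol(M'')$ differs from $\vol(M_0)$ by at most $\vol(N_{r_0}(\partial M_0)) \le 2 c_0 h(M) \vol(M_0)$, which is a fixed fraction of $\vol(M_0)$ provided $h(M)$ is small (for $h(M)$ bounded below by a positive constant depending on $\delta,n$ the conclusion is arranged trivially by taking $M''$ to be any fixed compact set), one obtains $\vol(M'') \ge c(\delta, n) \vol(M_0)$, yielding
\[
\frac{\vol(N_R(\partial M''))}{\vol(M'')} \le f(R)\, h(M), \qquad f(R) := 2 c^{-1} g(R, \delta, n) C_1(\delta, n).
\]
A final passage to a path-connected component of $M''$ (with the ratio bound preserved by averaging) ensures path-connectedness.

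The main obstacle will be the mean curvature bound on $\partial M''$: the mean curvature of $\{\phi = c_\ast\}$ equals $-\textrm{div}(\nabla \phi / |\nabla \phi|)$, which a priori blows up near critical points of $\phi$. One must choose $c_\ast$ so that $|\nabla \phi|$ is bounded below uniformly on the level, which is possible via a further averaging argument at the cost of an absolute constant factor. Balancing this simultaneously with the area control, the volume loss control, and the preservation of path-connectedness is the heart of Buser's smoothing; the scale $r_0 = r_0(\delta, n)$ is selected once and for all to decouple all constants from $M$.
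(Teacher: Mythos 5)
Your convolution-smoothing plan is in the general spirit of Buser's construction, but two of the estimates you lean on are not justified, and the first of them is essentially the assertion the lemma is meant to prove.

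The claimed bound $\vol(N_{r_0}(\partial M_0)) \le c_0(\delta,n)\,\area(\partial M_0)$ does not follow from a Ricci lower bound alone. The Laplacian comparison you quote, $\Delta\,\dist(\cdot,\partial M_0) \le (n-1)\delta\coth(\delta s)$, is the comparison for distance to a \emph{point}. For distance to a hypersurface $\Sigma$ the Riccati comparison carries an extra term controlled by the mean curvature (in fact by the second fundamental form) of $\Sigma$, and the cut locus of $\Sigma$ can come arbitrarily close if that is unbounded. Since $\partial M_0$ comes from a near-minimizer of the Cheeger quotient you have no a priori bound on its second fundamental form, and ``hairy'' boundaries with long thin tendrils make $\vol(N_{r_0}(\partial M_0))$ far larger than $r_0\,\area(\partial M_0)$. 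Indeed, if this tube bound held for $\partial M_0$ you could take $M''=M_0$ directly, so at this step you are assuming the conclusion at scale $r_0$.

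The second gap is the mean-curvature bound $|H_{\partial M''}|\le C_2(\delta,n)$ on the chosen level $\{\phi=c_*\}$. Sard plus co-area let you choose a regular value with $\area(\{\phi=c_*\})$ controlled on average, but a regular level of a smooth $\phi$ can still contain points where $|\nabla\phi|$ is arbitrarily small; compactness gives $\inf_{\{\phi=c_*\}}|\nabla\phi|>0$ but no lower bound depending only on $\delta,n,r_0$. Because $H=-\mathrm{div}(\nabla\phi/|\nabla\phi|)$, the mean curvature can blow up even on a regular level. The ``further averaging argument'' you flag is precisely where a proof would have to live, and as written it is a placeholder, not a step.

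Buser's actual argument, which the paper's proof recalls and slightly modifies, avoids both problems because it never appeals to a mean-curvature or Heintze--Karcher estimate for a hypersurface. He fixes the threshold $1/2$, sets $\widetilde A=\{p:\vol(A\cap B(p,r))>\tfrac12\vol(B(p,r))\}$, notes that every point of $\partial\widetilde A$ has density exactly $1/2$ and hence lies within $r$ of $\partial A$, and then bounds $\vol(N_t(\partial\widetilde A))$ by a Vitali covering combined with a relative isoperimetric inequality in small metric balls: at any density-$1/2$ point the relative perimeter of $A$ in a nearby ball is at least a fixed fraction of the ball's volume, and summing over a bounded-overlap cover yields the bound in terms of $\area(\partial A)$. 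This uses only ambient Ricci comparison (for ball volumes, overlap counts, and the relative isoperimetric inequality) and is insensitive to the regularity of $\partial A$. The only part of your write-up that matches the paper's contribution beyond Buser is the passage to a connected component, which you place at the end and the paper does via a convexity average over components of $\widetilde A$; that piece is fine, but the core tube estimate needs to be redone along Buser's covering lines rather than via Heintze--Karcher.
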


\begin{proof}
This is contained in Lemma 7.2 of \cite{Bu82} except in one detail: $M''$ is not required to be pathwise connected. However, a small perturbation of the proof yields a pathwise connected subset. To explain this, let us recall the construction of $M'$ from \cite{Bu82}. Let $\epsilon>0$ and $A$ be a smooth compact submanifold of $M$ with
$$\frac{\area(\partial A)}{\vol(A)} \le h(M)(1+\epsilon).$$

Let $r>0$ be a sufficiently small constant (how small depends only on the dimension). Let
$$M' = \{ p \in M:~ \vol(A \cap B_M(p,r)) > (1/2)\vol (B_M(p,r))\}.$$
Note that $\partial M' = \{ p \in M:~ \vol(A \cap B_M(p,r)) = (1/2)\vol (B_M(p,r))\}$.

[In Buser's notation, $B_M(p,r)$ is denoted by $U(p,r)$, $M'$ is denoted by $\widetilde{A}$, $\partial M'$ is denoted by $\widetilde{X}$, $N_t(\partial M')$ is denoted by $\widetilde{X}^t$, $\frac{\area(\partial A)}{\vol(A)}$ is denoted by $\mathscr{H}$.]

Let $K_1,\ldots, K_m$ be the components of $M'$. Observe that
$$\frac{\area(\partial A)}{\vol(A \cap M')} = \sum_{i=1}^m \frac{\area(\partial A \cap K_i)}{\vol(K_i \cap A)} \frac{\vol(K_i \cap A)}{\vol(A \cap M')}.$$
In particular, $\frac{\area(\partial A)}{\vol(A \cap M')}$ is a convex sum of $\frac{\area(\partial A \cap K_i)}{\vol(K_i \cap A)}$. So there exists a component $K_i$ such that
$$\frac{\area(\partial A \cap K_i)}{\vol(K_i \cap A)} \le \frac{\area(\partial A)}{\vol(A \cap M')}  \le (1+\epsilon)h(M) \frac{\vol(A)}{\vol(A \cap M')}.$$
According to \cite[equations 4.6, 4.9]{Bu82}, $\vol(A \cap M') \ge c\vol(A)$ where $c=1-\frac{4 \mathscr{H}\beta(4r)}{j(r)\beta(r)} \ge 1/2$ (in Buser's notation). Therefore,
$$\frac{\area(\partial A \cap K_i)}{\vol(K_i \cap A)} \le \frac{\area(\partial A)}{\vol(A \cap M')}  \le 2(1+\epsilon)h(M).$$

Let $M''$ be the closure of $K_i$. It is now possible to replace $M'$ with $M''$ in the proof of \cite[Lemma 7.2]{Bu82} (which is mostly contained in \S 4 of \cite{Bu82}) to conclude that $M''$ satisfies (\ref{eqn:Buser}).

\end{proof}

\begin{proof}[Proof of Theorem \ref{thm:main-manifold}]
Because $X/\Lambda$ is compact, \cite[Theorem 7.9]{Ch93} implies that there exists an $\epsilon>0$ such that every ball of radius $\le 10\epsilon$ in $X$  is strongly convex. So Theorem \ref{thm:main0} implies $I_r(X|\cG_d)>0$ for some $r>0$. Lemma \ref{lem:Buser} implies that if $I(X|\cG_d)<1$ then $I_r(X|\cG_d) \le f(r)I(X|\cG_d)$ for some function $f$ which depends only on the dimension of $X$ and a lower bound on its Ricci curvature. Thus $I(X|\cG_d)>0$.
\end{proof}

\section{Applications}\label{sec:app}
In this section we prove Corollary \ref{cor:app1}. The starting point is:

\begin{lem}\label{lem:Dodzuik}
If $\Lambda$ is a lattice in $\Isom(\H^{2n})$ for some $n\ge 1$, then $b_n^{(2)}(\Lambda)>0$. 
\end{lem}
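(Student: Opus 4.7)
The plan is to reduce the problem to a cocompact lattice and then compute in the middle dimension via Atiyah's $L^2$-index theorem together with the Gauss--Bonnet--Chern formula for hyperbolic manifolds.

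First I would invoke a proportionality principle: for any two lattices $\Lambda_1, \Lambda_2$ in the locally compact second countable group $G = \Isom(\H^{2n})$, the ratios $b_k^{(2)}(\Lambda_j)/\vol(G/\Lambda_j)$ agree for every $k$. This is originally due to Cheeger--Gromov for cocompact lattices, and its extension to arbitrary lattices follows from Gaboriau's theorem that $L^2$-Betti numbers are invariants of measure equivalence up to the coupling index, together with the fact that any two lattices in the same locally compact group are measure equivalent with coupling index equal to the ratio of covolumes. Consequently it suffices to produce a single cocompact torsion-free lattice $\Lambda_0 < \Isom(\H^{2n})$ with $b_n^{(2)}(\Lambda_0) > 0$. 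Such a $\Lambda_0$ exists by Borel--Harish-Chandra (for torsion-freeness apply Selberg's lemma).

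For the cocompact case, let $M = \H^{2n}/\Lambda_0$; this is a closed aspherical Riemannian manifold with $\pi_1(M) = \Lambda_0$ and contractible universal cover $\H^{2n}$. By the Atiyah/Dodziuk identification, $b_k^{(2)}(\Lambda_0) = \dim_{\mathcal{N}(\Lambda_0)} \mathcal{H}_{(2)}^k(\H^{2n})$, where $\mathcal{H}_{(2)}^k$ denotes the space of $L^2$-harmonic $k$-forms on $\H^{2n}$. A theorem of Dodziuk computing the reduced $L^2$-cohomology of real hyperbolic space yields $\mathcal{H}_{(2)}^k(\H^{2n}) = 0$ for $k \neq n$, so $b_k^{(2)}(\Lambda_0) = 0$ for $k \neq n$, and the $L^2$-Euler characteristic collapses to
$$\chi^{(2)}(\Lambda_0) = (-1)^n b_n^{(2)}(\Lambda_0).$$
Atiyah's $L^2$-index theorem identifies $\chi^{(2)}(\Lambda_0)$ with the ordinary Euler characteristic $\chi(M)$, and the Gauss--Bonnet--Chern formula applied to the closed hyperbolic $2n$-manifold $M$ gives $\chi(M) = (-1)^n c_{2n} \vol(M)$ for an explicit positive constant $c_{2n}$. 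Combining these, $b_n^{(2)}(\Lambda_0) = c_{2n} \vol(M) > 0$, and the proportionality reduction completes the proof for an arbitrary lattice $\Lambda$.

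The substantive ingredients are all classical: the Cheeger--Gromov/Gaboriau proportionality, Dodziuk's vanishing of $L^2$-cohomology of $\H^{2n}$ outside the middle dimension, and the sign/positivity of the hyperbolic Gauss--Bonnet constant. The only small technical point to handle carefully is torsion in a general $\Lambda_0$ when citing Gauss--Bonnet: one either passes to a torsion-free finite-index subgroup (multiplicativity of both $L^2$-Betti numbers and covolume then preserves the conclusion) or invokes the orbifold version directly. I do not see a genuine obstacle beyond bookkeeping of these citations.
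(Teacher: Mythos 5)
Your argument is correct, and it is essentially the proof of the result the paper simply cites as \cite[Theorem 5.12]{Lu02}: proportionality of $L^2$-Betti numbers across lattices (Cheeger--Gromov, or Gaboriau's measure-equivalence version) reduces to a single torsion-free cocompact lattice; Dodziuk's theorem concentrates the reduced $L^2$-cohomology of $\H^{2n}$ in the middle degree; the $L^2$-Euler--Poincar\'e formula identifies $\chi^{(2)}$ with $\chi(M)$; and Gauss--Bonnet--Chern supplies $\chi(M)=(-1)^n c_{2n}\vol(M)$ with $c_{2n}>0$, yielding $b^{(2)}_n>0$. One small attribution quibble: the existence of cocompact lattices in $\Isom(\H^{2n})$ is Borel's 1963 theorem on compact Clifford--Klein forms of symmetric spaces, not Borel--Harish-Chandra (which concerns arithmeticity and finiteness of covolume); this does not affect the substance of your argument.
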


\begin{proof}
This is contained in \cite[Theorem 5.12]{Lu02}. 
\end{proof}

\begin{remark}
\cite[Theorem 5.12]{Lu02} also shows that if $\Lambda < \Isom(\H^{n})$ is a lattice then $b^{(2)}_d(\Lambda)=0$ unless $d=n/2$ is an integer.
\end{remark}

It now suffices to show:
\begin{prop}\label{prop:3m}
If $\Gamma$ is a torsion-free lattice in $\Isom(\H^3)$ then $\Gamma \in \cG_d$ for all $d>1$.
\end{prop}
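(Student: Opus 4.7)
The plan is to verify the two defining conditions of membership in $\cG_d$: that $\Gamma$ is residually finite, and that every finitely generated subgroup $\Gamma'<\Gamma$ has $\widehat{b}_d(\Gamma')=0$. The first condition is immediate from Mal\'cev's theorem, since $\Isom(\H^3)$ is linear (its identity component is $\mathrm{PSL}_2(\C)$) and hence so is $\Gamma$ together with every subgroup thereof. The heart of the proof is the second condition.

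First I would produce a finite classifying space for $\Gamma'$. By the Tameness Theorem of Agol and Calegari-Gabai, the hyperbolic $3$-manifold $\H^3/\Gamma'$ is homeomorphic to the interior of a compact $3$-manifold $N$ with $\pi_1(N)=\Gamma'$. Since $\Gamma$ (and hence $\Gamma'$) is torsion-free and $\H^3$ is contractible, $N$ is aspherical and therefore a finite $K(\Gamma',1)$. Theorem \ref{thm:Luck} then identifies $\widehat{b}_d(\Gamma')$ with the $L^2$-Betti number $b^{(2)}_d(\Gamma')$, so it suffices to show $b^{(2)}_d(\Gamma')=0$ for every $d>1$.

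Since $N$ has dimension $3$, the vanishing $b^{(2)}_d(\Gamma')=0$ for $d\ge 4$ is automatic from cohomological dimension. For $d=3$: if $\Gamma'$ has infinite index in $\Gamma$, or $\Gamma$ is non-uniform and the compact core acquires boundary from cusps, then $N$ has nonempty boundary and deformation-retracts to a $2$-complex, forcing $b^{(2)}_3(\Gamma')=0$; in the remaining case $\Gamma'=\Gamma$ is cocompact, and $b^{(2)}_3(\Gamma)=0$ by Dodziuk's vanishing of $L^2$-harmonic forms on odd-dimensional hyperbolic space. The main obstacle is the case $d=2$. Here I would invoke the Lott-L\"uck computation of $L^2$-Betti numbers for aspherical compact $3$-manifolds (unconditional after Perelman's geometrization), which gives $b^{(2)}_2(\Gamma')=0$ once the pieces of the JSJ decomposition of $N$ are all hyperbolic or Seifert-fibered with negative Euler characteristic. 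The delicate point is that the compact core of a finitely generated Kleinian subgroup can carry higher-genus (non-torus) boundary components arising from geometrically finite non-cuspidal ends, which falls outside the classical formulation of Lott-L\"uck; this gap I would bridge either by appealing to the appropriate generalization of their theorem to $3$-manifolds with higher-genus boundary, or, alternatively, by using Agol's virtual fibering and the Wise-Agol special cube complex machinery to replace $\Gamma'$ by a finite-index surface-by-cyclic or free-by-cyclic subgroup, for which $b^{(2)}_2=0$ follows from the multiplicativity of $L^2$-Betti numbers under extensions by infinite amenable quotients.
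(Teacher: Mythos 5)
Your proof is correct in broad outline—the reduction via a finite classifying space for each finitely generated $\Gamma'$ plus L\"uck approximation to the statement $b^{(2)}_d(\Gamma')=0$ for $d\ge 2$ is exactly the paper's reduction, though Scott's Core Theorem already suffices for the finite $K(\Gamma',1)$ and the full Tameness Theorem is not needed. However, the route you propose for the vanishing itself is genuinely different from the paper's, and your own argument has a self-acknowledged hole that you have not closed.

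The paper does not touch Lott--L\"uck or geometrization at all. Instead it proves the stronger statement that $\Gamma$ is \emph{almost treeable}: one exhibits a single fibered hyperbolic $3$-manifold lattice (J\o rgensen) as almost treeable via an explicit invariant random forest on $F_r\rtimes_\theta\Z$, transports this to every lattice in $\Isom(\H^3)$ by measure equivalence (Gaboriau), passes to all subgroups (again Gaboriau), and then applies Gaboriau's theorem that almost treeable groups satisfy $b^{(2)}_k=0$ for all $k\ge 2$. This stays entirely within measured group theory and yields the vanishing for \emph{every} subgroup of $\Gamma$ in one stroke.

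As for your proposed route: I believe your worry about higher-genus boundary is actually unwarranted---Lott--L\"uck's theorem applies to compact orientable irreducible $3$-manifolds with infinite fundamental group satisfying geometrization, with no restriction to toral boundary, and gives $b^{(2)}_2=0$---but you have not verified this, and the burden is on you to check the precise hypotheses. More seriously, your proposed fallback via Agol--Wise virtual fibering does not work as stated: virtual fibering is a theorem about \emph{finite-volume} hyperbolic $3$-manifolds, whereas for a proper finitely generated subgroup $\Gamma'<\Gamma$ the quotient $\H^3/\Gamma'$ typically has infinite volume (e.g.\ when $\Gamma'$ is a Schottky group), so the Wise--Agol machinery cannot be invoked for $\Gamma'$ directly. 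Finally, your $d=3$ case analysis conflates "finite index in $\Gamma$" with "$\Gamma'=\Gamma$": if $\Gamma$ is cocompact and $\Gamma'<\Gamma$ has finite index, then $\Gamma'$ is again cocompact with closed quotient, and the vanishing should be obtained from multiplicativity of $L^2$-Betti numbers, not just the case $\Gamma'=\Gamma$.
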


\begin{proof}
The fact that $\Gamma$ is residually finite is well-known: $\Gamma$ is linear (since it is a subgroup of $SO(3,1)$) and all finitely generated linear groups are residually finite by \cite{Ma40}. Let $\Gamma' < \Gamma$ be finitely generated. Observe that $\Gamma'$ is the fundamental group of a hyperbolic 3-manifold (namely $\H^3/\Gamma'$). By the Scott Core Theorem \cite{Sc73}, $\Gamma'$ has a finite classifying space. By L\"uck's approximation Theorem \ref{thm:Luck}, it suffices to show that $b^{(2)}_d(\Gamma')=0$ for all $d>1$. This is handled in Lemma \ref{lem:1-dim} below. In fact, we will prove something stronger: that $\Gamma$ is almost treeable, as defined next.
\end{proof}


\begin{defn}[Treeability and almost treeability]
Let $\Gamma$ be a countable discrete group. Let ${\Gamma \choose 2}$ be the set of all unordered pairs of elements in $\Gamma$ and let $\cG(\Gamma) = 2^{{\Gamma \choose 2}}$ be the set of all subsets of ${\Gamma \choose 2}$ with the product topology. Because $\Gamma$ is countable, this means that $\cG(\Gamma)$ is a compact metrizable space (in fact, it is homeomorphic to a Cantor set). Associated to any element $x \in \cG(\Gamma)$ is a graph $G_x$ with vertex set $\Gamma$ and edge set $x$.  Observe that $\Gamma$ acts on $\cG(\Gamma)$ by $g x = \{ \{ga, gb\}:~ \{a,b\} \in x\}$ for $g \in \Gamma, x\in \cG(\Gamma)$. 

Let $\cF(\Gamma)$ denote the set of all $x \in \cG(\Gamma)$ such that $G_x$ is a forest (i.e., every connected component of $G_x$ is simply connected). Let $\cT(\Gamma) \subset \cF(\Gamma)$ denote the set of all $x \in \cF(\Gamma)$ such that $G_x$ is a tree. The action of $\Gamma$ preserves both $\cF(\Gamma)$ and $\cT(\Gamma)$.

We say $\Gamma$ is {\em treeable} if there is a $\Gamma$-invariant Borel probability measure on $\cT(\Gamma)$. The group $\Gamma$ is {\em almost treeable} if for every finite set $F \subset \Gamma$ and every $\epsilon>0$ there exists a $\Gamma$-invariant Borel probability measure $\mu$ on $\cF(\Gamma)$ such that if $x \in \cF(\Gamma)$ is random with law $\mu$ then with probability $\ge 1-\epsilon$ the set $F$ is contained in a connected component of $G_x$. In particular, if $\Gamma$ is treeable then $\Gamma$ is almost treeable.
\end{defn}

Treeability was introduced in \cite{Ad88} and almost treeability first appeared in \cite{Ga05}. The connection between almost treeability and $L^2$-Betti numbers is furnished by: 
\begin{lem}\label{lem:Gab}
If $\Gamma$ is almost treeable then $b_k^{(2)}(\Gamma)=0$ for every $k\ge 2$.
\end{lem}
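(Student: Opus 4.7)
The plan is to reinterpret $b_k^{(2)}(\Gamma)$ as an $L^2$-Betti number of an orbit equivalence relation and then exploit almost treeability to approximate this relation by treeable subrelations, whose higher $L^2$-Betti numbers vanish by Gaboriau's theorem. Fix a free probability-measure-preserving (pmp) action $\Gamma \curvearrowright (Y,\nu)$ (a Bernoulli shift will do) with orbit equivalence relation $\cR$. By Gaboriau's theorem identifying the $L^2$-Betti numbers of a group with those of its orbit equivalence relation, $\beta_k^{(2)}(\cR) = b_k^{(2)}(\Gamma)$ for every $k$, so it suffices to show $\beta_k^{(2)}(\cR) = 0$ for $k \ge 2$.

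Next, pick an exhaustion $F_1 \subset F_2 \subset \cdots$ of $\Gamma$ by finite sets and a sequence $\epsilon_n\to 0$. Almost treeability supplies $\Gamma$-invariant probability measures $\mu_n$ on $\cF(\Gamma)$ such that, with $\mu_n$-probability at least $1-\epsilon_n$, all of $F_n$ lies in a single connected component of $G_x$. Consider the diagonal $\Gamma$-action on $Z := Y \times \prod_n \cF(\Gamma)$ with the product measure; this action is free and pmp, and its orbit equivalence relation $\widetilde\cR$ is canonically isomorphic to $\cR$ via the first factor. The $n$-th coordinate supplies, via the standard passage from an invariant random subgraph of $\Gamma$ to a graphing of $\widetilde\cR$, a treeable subrelation $\sS_n \subset \widetilde\cR$ with the property that $(z, g\cdot z) \in \sS_n$ with probability at least $1-\epsilon_n$ for each $g \in F_n$. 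Since $\sS_n$ is treeable, Gaboriau's vanishing theorem for treeable pmp equivalence relations gives $\beta_k^{(2)}(\sS_n)=0$ for all $k \ge 2$.

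Form the increasing joins $\widehat\sS_n := \sS_1 \vee \cdots \vee \sS_n$; the finite-set condition forces $\widehat\sS_n$ to exhaust $\widetilde\cR$ up to null sets. An iteration of Gaboriau's Mayer--Vietoris for pmp subrelations propagates the vanishing from the pieces $\sS_i$ to the joins $\widehat\sS_n$ in degrees $\ge 2$, and his continuity of $L^2$-Betti numbers under increasing unions then yields $\beta_k^{(2)}(\widetilde\cR)=\lim_n \beta_k^{(2)}(\widehat\sS_n)=0$, proving the lemma.

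The hard part is this last step: controlling $\beta_k^{(2)}$ of a finite join of treeable subrelations in degrees $\ge 2$ requires delicate bookkeeping of the Mayer--Vietoris connecting maps, which are not obviously trivial a priori. A cleaner route would be to overlay all the $\mu_n$ into a single invariant random subgraph whose connected components are simultaneously forests and asymptotically coincide with $\Gamma$-orbits, reducing at once to Gaboriau's single-relation vanishing theorem without any limiting argument.
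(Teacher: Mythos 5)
The paper's proof is a one-line citation to Gaboriau's work: Lemma \ref{lem:Gab} is precisely \cite[Theorem 0.8]{Ga05}, which Gaboriau proves directly. Your proposal tries to reconstruct the argument, and you are right to be worried about the join step --- it is not merely ``delicate bookkeeping'' but actually false. Joins of treeable pmp subrelations can have nonzero $\beta_2^{(2)}$. The cleanest way to see this: take $\Gamma = F_2 \times F_2$ acting freely and pmp on a standard probability space. Each of the two $F_2$ factors generates a treeable subrelation $\sS_1, \sS_2$ of the orbit relation $\cR$, and $\cR = \sS_1 \vee \sS_2$; yet $\beta_2^{(2)}(\cR) = b_2^{(2)}(F_2\times F_2) = 1 \ne 0$. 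So the inductive step ``if $\beta_k^{(2)}(\sS_i)=0$ for $i\le n$ then $\beta_k^{(2)}(\sS_1\vee\cdots\vee\sS_n)=0$ for $k\ge 2$'' fails at the very first join, and no amount of Mayer--Vietoris bookkeeping will recover it, because the connecting maps involve the (unknown) homology of the intersection subrelations.

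Your alternate route at the end --- overlaying the $\mu_n$ into a single random subgraph that is \emph{simultaneously} a forest and asymptotically spanning --- also does not work as stated: the union of the forests is in general no longer a forest, and if one could produce a single spanning invariant random forest, one would have established treeability outright, which is strictly stronger than the hypothesis. The genuine content of Gaboriau's Theorem 0.8 is that almost treeability, despite being a priori weaker than treeability, still forces vanishing of $\beta_k^{(2)}$ for $k\ge 2$. His argument does \emph{not} pass through joins; it uses the forests $\sS_n$ (not their joins) together with his lower-semicontinuity result for increasing subrelations \cite[around Proposition 5.13]{Ga02}, applied at the level of the $L^2$-homology dimensions of auxiliary $\cR$-complexes, in a way that exploits the fact that each $\sS_n$ has a one-dimensional contractible fibered complex. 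You should either cite \cite[Theorem 0.8]{Ga05} as the paper does, or reproduce that argument faithfully; the proposed Mayer--Vietoris-on-joins shortcut is unsound.
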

\begin{proof}
This is  \cite[Theorem 0.8]{Ga05}.
\end{proof}

It is technically easier to work in the realm of equivalence relations. So we introduce the following definitions. 
\begin{defn}
Let $(X,\mu)$ be a standard Borel probability space and $E \subset X \times X$ a discrete Borel equivalence relation (discrete means that every equivalence class is at most countable). We say that $E$ is {\em treeable} (mod $\mu$) if there exists a Borel subset $H \subset E$ such that $H$ is symmetric (so $(a,b) \in H \Rightarrow (b,a) \in H$) and the graph $G_H$ with vertex set $X$ and edge set $\{ \{a,b\}:~(a,b) \in H\}$ is such that for $\mu$-a.e. $x\in X$ the connected component of $G_H$ containing $x$ is a tree spanning the $E$-class of $x$. 

We say that $E$ is {\em almost treeable} (mod $\mu$) if there is a sequence $\{H_i\}_{i=1}^\infty$ of symmetric Borel subsets $H_i \subset E$ such that the corresponding graphs $G_{H_i}$ are forests and for a.e. $x \in X$ and any $y$ in the $E$-class of $x$ we have that $x$ and $y$ are contained in the same component of $H_i$ for all but finitely many $i$.
\end{defn}

The connection between equivalence relations and groups is given by:
\begin{prop}\label{prop:eq-rel}
A group $\Gamma$ is treeable if and only if there is a free pmp (probability-measure-preserving) action $\Gamma \cc (X,\mu)$ such that if $E$ is the orbit-equivalence relation $E=\{(x,gx):~x\in X,g\in \Gamma\}$ then $E$ is treeable (mod $\mu$). Similarly, $\Gamma$ is almost treeable if and only if there is a free pmp action $\Gamma \cc (X,\mu)$ such that the orbit-equivalence relation $E$ is almost treeable (mod $\mu$).
\end{prop}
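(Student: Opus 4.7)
The plan is to relate, for a free pmp action $\Gamma\cc(X,\mu)$ and a Borel symmetric $H\subset E$, the set $H$ to a measurable $\Gamma$-equivariant map
$$\Phi_H:X\to\cG(\Gamma),\qquad \Phi_H(x):=\bigl\{\{a,b\}\in\tbinom{\Gamma}{2}:(a^{-1}x,b^{-1}x)\in H\bigr\}.$$
Equivariance $\Phi_H(hx)=h\Phi_H(x)$ follows from the substitution $a'=ha,\,b'=hb$ (so that $(a')^{-1}hx=a^{-1}x$) and requires no invariance hypothesis on $H$. Freeness of the action makes $a\mapsto a^{-1}x$ a bijection from $\Gamma$ onto the orbit of $x$, carrying $G_{\Phi_H(x)}$ to the $H$-induced graph on this orbit; hence $\Phi_H(x)\in\cT(\Gamma)$ (resp.\ $\cF(\Gamma)$) whenever $H$ spans a tree (resp.\ forest) on that orbit.

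The ``only if'' direction of the treeable equivalence is then immediate: if $H$ witnesses treeability of $E$, then $(\Phi_H)_*\mu$ is a $\Gamma$-invariant Borel probability measure on $\cT(\Gamma)$. For the ``if'' direction, given $\mu$ invariant on $\cT(\Gamma)$, form $(X,\nu):=(\cT(\Gamma)\times[0,1]^\Gamma,\mu\times\lambda^\Gamma)$ with diagonal action (natural on $\cT(\Gamma)$, Bernoulli shift on $[0,1]^\Gamma$); this action is free and pmp. Define
$$H_\mu:=\bigl\{(p,q)\in X\times X:q=g\cdot p\text{ for some }g\in\Gamma\text{ with }\{e,g^{-1}\}\in x(p)\bigr\},$$
where $x(p)\in\cT(\Gamma)$ is the first coordinate of $p$. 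Freeness makes the $g$ unique, so $H_\mu$ is Borel; symmetry follows from the identity $\{e,g^{-1}\}\in x(p)\iff\{e,g\}\in g\cdot x(p)=x(gp)$; and the parameterization $\phi(a):=a^{-1}p$ of the orbit of $p$ gives $(\phi(a),\phi(b))\in H_\mu\iff\{a,b\}\in x(p)$, so $H_\mu$ restricted to each orbit is isomorphic to the tree $G_{x(p)}$.

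For the almost-treeable case I iterate. Enumerate $\Gamma=\{g_1,g_2,\ldots\}$, set $F_i:=\{g_1,\ldots,g_i\}$, fix summable $\epsilon_i>0$, and let $\mu_i$ on $\cF(\Gamma)$ witness the almost-treeable hypothesis for $(F_i,\epsilon_i)$. Work on $\cF(\Gamma)^\N\times[0,1]^\Gamma$ with product invariant measure, defining $H_i$ via the $i$-th $\cF(\Gamma)$-coordinate exactly as $H_\mu$ above. A Borel--Cantelli argument applied once per $g\in\Gamma$ and then intersected over the countable set $\Gamma$ shows $p$ and $g\cdot p$ share an $H_i$-component for all but finitely many $i$, almost surely. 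Conversely, each $H_i$ from the hypothesis produces $\mu^i:=(\Phi_{H_i})_*\mu$ on $\cF(\Gamma)$, and for any finite $F\subset\Gamma$ and $\epsilon>0$, a finite union bound over pairs in $F$ together with the almost-treeing hypothesis gives $\mu^i(F\subset\text{one component of }G_x)\ge 1-\epsilon$ for $i$ sufficiently large. The only real subtlety is the left/right bookkeeping: using $a^{-1},b^{-1}$ in $\Phi_H$ and $\{e,g^{-1}\}$ in $H_\mu$ is what makes equivariance and symmetry automatic, and neither direction requires $\Gamma$-invariance of the witness $H$.
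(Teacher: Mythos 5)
Your proposal is correct and is essentially the argument underlying \cite[Proposition 30.1]{KM04}, which the paper simply cites; you have also filled in the ``easy exercise'' for the almost-treeable direction with an explicit Borel--Cantelli argument. The core dictionary is the equivariant map $\Phi_H$ and the reverse construction $H_\mu$ on a product with a Bernoulli factor (needed because the $\Gamma$-action on $\cT(\Gamma)$ or $\cF(\Gamma)$ need not be free); your left/right bookkeeping ($a^{-1}x$ in $\Phi_H$, $\{e,g^{-1}\}\in x(p)$ in $H_\mu$) is precisely what makes equivariance and symmetry work, and the verification that the orbit restriction of $H_\mu$ is isomorphic to $G_{x(p)}$ via $a\mapsto a^{-1}p$ is correct. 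For the almost-treeable ``if'' direction, the key points are: (i) the marginal of the $i$-th coordinate under the product measure is $\mu_i$, so each ``$e,g$ not in the same $G_{x_i}$-component'' event has probability $\le\epsilon_i$ once $e,g\in F_i$; (ii) Borel--Cantelli (first lemma, no independence needed) then gives the cofinite-$i$ containment a.s.\ for each fixed $g$; and (iii) intersecting over countably many $g$ finishes. All of this is in order. No gaps.
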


\begin{proof}
In the case of treeability, this is \cite[Proposition 30.1]{KM04}. The almost treeable case is similar (and an easy exercise).
\end{proof}

\begin{lem}\label{lem:sub}
If $\Gamma$ is treeable and $\Gamma' < \Gamma$ then $\Gamma'$ is treeable. Similarly if $\Gamma$ is almost treeable and $\Gamma'<\Gamma$ then $\Gamma'$ is almost treeable.
\end{lem}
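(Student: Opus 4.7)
The plan is to reduce both statements to the standard fact that a Borel sub-equivalence relation of a (measurably) treeable equivalence relation is treeable, and then translate back using Proposition \ref{prop:eq-rel}.

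Assume $\Gamma$ is treeable. By Proposition \ref{prop:eq-rel}, fix a free pmp action $\Gamma \curvearrowright (X,\mu)$ whose orbit equivalence relation $E$ is treeable mod $\mu$, witnessed by a symmetric Borel treeing $H \subset E$. Restrict the action to $\Gamma'<\Gamma$; then the $\Gamma'$-action is also free and pmp, and its orbit equivalence relation $E'$ satisfies $E'\subset E$. By Proposition \ref{prop:eq-rel} applied in the reverse direction, it therefore suffices to build a Borel treeing $H'\subset E'$ of $E'$. I will construct $H'$ by the standard \emph{tree contraction}: for $x,y\in X$ in the same $E$-class, let $\pi(x,y)$ denote the unique path in the tree $H|_{[x]_E}$ from $x$ to $y$, and put $\{x,y\}\in H'$ exactly when $x\,E'\,y$, $x\ne y$, and no intermediate vertex of $\pi(x,y)$ lies in $[x]_{E'}$. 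Symmetry of $H'$ is immediate, and Borelness follows because membership in $H$, the $E$- and $E'$-classes, and the path $\pi(x,y)$ are all Borel in $(x,y)$ (paths in a Borel treeing are constructed by a Borel recursion, cf.\ Kechris--Miller).

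The key step is to verify that, restricted to a single $E'$-class $D\subset [x]_E$, the graph with edges $H'\cap D^2$ is a tree. Connectedness: for $y,z\in D$, the vertices of $\pi(y,z)$ lying in $D$ form a finite sequence $y=v_0,v_1,\dots,v_k=z$ with consecutive pairs joined by $H'$-edges, so $y$ and $z$ are $H'$-connected inside $D$. Acyclicity: any cycle in $(D,H'\cap D^2)$ would, after replacing each $H'$-edge by the corresponding $H$-path, give a nontrivial closed walk in the tree $H|_{[x]_E}$ that does not backtrack (any backtrack would pass through an intermediate vertex of $D$, contradicting the defining condition of $H'$), which is impossible in a tree. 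Thus $H'$ is a Borel treeing of $E'$, and $\Gamma'$ is treeable.

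For the almost treeable case I follow the same template. By the analogue of Proposition \ref{prop:eq-rel}, fix a free pmp action $\Gamma\curvearrowright(X,\mu)$ with orbit equivalence relation $E$ almost treeable mod $\mu$, witnessed by a sequence of symmetric Borel forests $\{H_i\}_{i=1}^\infty\subset E$ such that for a.e.\ $x$ and every $y\in[x]_E$, the points $x$ and $y$ lie in a common $H_i$-component for all sufficiently large $i$. Restricting to $\Gamma'$ yields $E'\subset E$, and I apply the tree-contraction construction componentwise: on each component $C$ of $H_i$ (a Borel subtree of an $E$-class) and each subset $C\cap [y]_{E'}$, take $H'_i$-edges exactly as above using the tree $H_i|_C$ in place of $H|_{[x]_E}$. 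Each $H'_i$ is then a Borel forest in $E'$. If $x\,E'\,y$, then eventually $x$ and $y$ share an $H_i$-component $C$, and hence share an $H'_i$-component (namely $C\cap[x]_{E'}$), giving the desired almost-treeing property.

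The main obstacle is essentially bookkeeping: checking that the tree-contraction is a genuine tree on each $E'$-class and that the construction is Borel and equivariant enough to descend through Proposition \ref{prop:eq-rel}. The geometric content (acyclic plus connected via the no-intermediate-vertex condition) is the only nontrivial point, and everything else is routine measurable combinatorics.
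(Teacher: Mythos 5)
There is a genuine gap in your treeable case, and it propagates to the almost treeable case. The ``tree contraction'' you define --- put $\{x,y\}\in H'$ iff $x\,E'\,y$, $x\ne y$, and no interior vertex of the $H$-geodesic $\pi(x,y)$ lies in $[x]_{E'}$ --- does \emph{not} in general produce a tree on an $E'$-class. Here is a concrete counterexample. Let the $E$-class carry the tree $T$ with vertices $\{a,x,b,y,c\}$ and edges $\{a,x\},\{x,b\},\{x,y\},\{y,c\}$, and let the $E'$-class be $D=\{a,b,c\}$. Then $\pi(a,b)=a,x,b$, $\pi(b,c)=b,x,y,c$, $\pi(a,c)=a,x,y,c$, and none of the interior vertices $x,y$ lie in $D$, so \emph{all three} pairs $\{a,b\},\{b,c\},\{a,c\}$ become $H'$-edges. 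That is a $3$-cycle, not a tree. The source of the failure is a branch point ($x$) of the Steiner tree of $D$ that lies outside $D$: your condition never ``sees'' such branch points.

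Your acyclicity argument breaks for exactly this reason. When you replace each $H'$-edge of a putative cycle by its $H$-geodesic, the resulting closed walk in $T$ \emph{does} backtrack, but the backtracks occur at the vertices $w_i\in D$ that are endpoints of the $H'$-edges (in the example above, at $a,b,c$), not at interior vertices of the geodesics. So the assertion ``any backtrack would pass through an intermediate vertex of $D$'' is false, and there is no contradiction with the defining condition of $H'$. The same defect affects the almost treeable case, where you apply the identical contraction to each component of $H_i$.

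This is not ``routine bookkeeping'': it is the essential content of the statement. To actually extract a treeing of $E'$ from a treeing of $E$ one must handle branch points of the induced Steiner trees that lie outside the $E'$-class, which requires an additional idea (e.g.\ a Borel choice of ``roots'' along each component so that each $v\in D$ can be joined to the nearest $D$-ancestor, or the careful component-wise analysis in the literature). The paper sidesteps this by citing Gaboriau \cite[Propositions 5.8 and 5.16]{Ga02} together with Proposition~\ref{prop:eq-rel}; your proposal re-derives Proposition~\ref{prop:eq-rel} correctly but replaces the hard citation with a construction that is simply wrong. You would either need to fix the contraction (introducing a Borel rooting and taking nearest-$D$-ancestor edges, then checking this is measurable) or keep the paper's citation.
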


\begin{proof}
This is a consequence of Proposition \ref{prop:eq-rel} and \cite[Propositions 5.8 and 5.16]{Ga02}.
\end{proof}

\begin{lem}\label{lem:me}
Treeability and almost treeability are measure-equivalence invariants. Therefore, if $\Gamma_1,\Gamma_2$ are lattices in a locally compact group $G$ and $\Gamma_1$ is almost treeable, then $\Gamma_2$ is almost treeable.
\end{lem}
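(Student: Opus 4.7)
The plan is to combine Proposition \ref{prop:eq-rel} with the classical fact that any two lattices in a common lcsc group are measure equivalent (ME). Recall that $\Gamma_1 \sim_{ME} \Gamma_2$ means the existence of a $\sigma$-finite standard Borel space $\Omega$ carrying commuting free measure-preserving actions of $\Gamma_1$ and $\Gamma_2$, each admitting a finite-measure fundamental domain. For two lattices in a lcsc group $G$, one takes $\Omega = G$ with Haar measure and the actions $(\gamma_1,\gamma_2)\cdot g = \gamma_1 g \gamma_2^{-1}$. Thus the second sentence of the lemma will follow once ME-invariance of treeability and almost treeability is established.

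To prove ME-invariance I would use the standard dictionary: whenever $\Gamma_1 \sim_{ME} \Gamma_2$ there exist free pmp actions $\Gamma_i \cc (X_i,\mu_i)$ whose orbit-equivalence relations $E_i$ are stably orbit equivalent, i.e., there are positive-measure Borel sets $A_i \subset X_i$ and a measure-class-preserving Borel isomorphism sending $(A_1, E_1|_{A_1})$ to $(A_2, E_2|_{A_2})$. Via Proposition \ref{prop:eq-rel}, the group-level statement then reduces to showing that treeability and almost treeability of countable pmp equivalence relations are preserved under (i) isomorphism, (ii) restriction to a positive-measure subset, and (iii) extension from such a restriction back to the full relation. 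Parts (i) and (ii) are immediate from the definitions, since the induced subgraph of a forest is a forest and an exhausting sequence of forests restricts to an exhausting sequence of forests.

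Step (iii) is the heart. For treeability it is the standard hub-and-spoke construction of \cite{Ga02}: given a treeing of $E|_A$ with $A$ meeting $\mu$-a.e.\ $E$-class, pick a Borel selector sending each $x \notin A$ to a point $s(x) \in A$ in its $E$-class, and adjoin the edges $\{x, s(x)\}$. The result is a treeing of $E$. For almost treeability, apply the same procedure to an exhausting sequence $\{H_n\}$ of subforests of $E|_A$: the enlarged graphs $H_n \cup \{\{x, s(x)\}: x \notin A\}$ remain forests (attaching pendant edges to disjoint trees preserves the forest property), and if $y$ is $E$-equivalent to $x$ then $s(x), s(y) \in A$ are $(E|_A)$-equivalent and hence eventually lie in a common component of $H_n$, which drags $x$ and $y$ into a common component of the extension for all but finitely many $n$.

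The main obstacle is the bookkeeping in step (iii) for the ``almost'' case: one must verify Borel-measurably that the hub-and-spoke construction preserves the exhausting property for a.e.\ $x \in X$ and a.e.\ $E$-equivalent pair $(x,y)$, which reduces to the analogous property inside $A$ together with the fact that the hub selector does not depend on $n$. The argument is essentially the one given in \cite{Ga05} for closely related invariants, and no new ideas are required beyond what is already in \cite{Ga02, Ga05}. Once ME-invariance is established, the second sentence of the lemma follows immediately from the ME of any two lattices in $G$.
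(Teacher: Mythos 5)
Your overall strategy --- reduce to stable orbit equivalence of free pmp actions, then show that treeability and almost treeability of countable pmp equivalence relations are preserved under isomorphism, restriction to a positive-measure subset, and extension back to the full relation --- is essentially the route the paper takes, via Gaboriau's Proposition 6.5 of \cite{Ga02} for treeability, with ``almost treeability is similar.'' Your treatment of step (iii), the hub-and-spoke extension, is correct, including the key observation that the selector $s$ does not depend on $n$, so that eventual same-component in $H_n$ on $A$ propagates to eventual same-component in the enlarged graph.

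The gap is in step (ii), restriction. You assert it is ``immediate from the definitions'' because ``the induced subgraph of a forest is a forest and an exhausting sequence of forests restricts to an exhausting sequence of forests.'' The first clause is true but not the relevant point, and the second is false: if $H$ is a treeing (or one term of an exhausting sequence of subforests) of $E$ on $X$, and $A\subset X$ has positive measure, then the induced subgraph $H\cap(A\times A)$ need not connect two $E|_A$-equivalent points of $A$, since the $H$-path joining them may pass through $X\setminus A$ --- indeed it typically does. So the restriction of an exhausting sequence is \emph{not} exhausting. The correct construction is the standard geodesic shortcut: inside each $H$-tree $T$, declare $a,a'\in A\cap T$ adjacent iff the unique $T$-geodesic from $a$ to $a'$ meets $A$ only at its endpoints. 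This gives a Borel forest on $A$ whose components span the $E|_A$-classes; applying it separately to each $H_n$ preserves the exhausting property, because if $x,y\in A$ lie in the same $H_n$-component then they lie in the same component of the shortcut graph built from $H_n$. This is precisely the content of Gaboriau's Propositions 5.8 and 5.16 of \cite{Ga02}, which the paper already invokes in Lemma \ref{lem:sub}; the step is not hard, but it is not immediate, and your stated justification would not survive scrutiny.
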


\begin{proof}
In the case of treeability this is \cite[Proposition 6.5]{Ga02}. Almost treeability is similar.
\end{proof}

\begin{lem}\label{lem:treeable}
If $\Gamma$ is the fundamental group of a surface then $\Gamma$ is treeable.
\end{lem}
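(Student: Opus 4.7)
The plan is to split into cases by the topological type of the surface $\Sigma$ and reduce everything to treeability of free groups together with the measure-equivalence invariance of treeability (Lemma \ref{lem:me}) and the fact that amenable groups are treeable (since any orbit equivalence relation of a free pmp action of an amenable group is hyperfinite, hence treeable).

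First I would dispose of the easy cases. If $\Sigma$ is non-closed (it has boundary or a puncture), then $\Gamma=\pi_1(\Sigma)$ is free, and a free group $F$ is manifestly treeable: for any free pmp action $F\cc(X,\mu)$ with orbit equivalence relation $E$, the graphing coming from a free generating set gives, on each $E$-class, a copy of the Cayley graph of $F$, which is a tree. If $\Sigma=S^{2}$ then $\Gamma$ is trivial. If $\Sigma=T^{2}$ then $\Gamma=\Z^{2}$ is amenable, so its orbit equivalence relations are hyperfinite and hence treeable.

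The interesting case is when $\Sigma$ is a closed surface of genus $g\ge 2$. Equip $\Sigma$ with a hyperbolic structure; then $\Gamma$ embeds as a cocompact lattice in $G=PSL(2,\R)=\Isom^{+}(\H^{2})$. On the other hand, the thrice-punctured sphere carries a complete finite-area hyperbolic structure and its fundamental group $F$ is free of rank $2$, realized as a non-cocompact lattice in the same group $G$. Since $F$ is free it is treeable by the first paragraph, and by Lemma \ref{lem:me} treeability is a measure-equivalence invariant, so any other lattice in $G$ is treeable. Applying this to $\Gamma<G$ gives that $\Gamma$ is treeable.

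There is no real obstacle: the entire argument consists of combining standard facts. The only subtlety is remembering to handle the amenable case $\Z^{2}$ separately (it is not a lattice in $PSL(2,\R)$ and has no free lattice companion in a common ambient Lie group, but amenability bypasses this); and noting that the non-orientable cases are handled identically by passing to the orientation double cover and using Lemma \ref{lem:sub} together with the fact that finite extensions of treeable groups are measure equivalent to treeable groups.
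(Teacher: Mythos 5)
Your proof is correct and follows essentially the same route as the paper: treat free groups via their Cayley-graph treeing, amenable groups via hyperfiniteness, and closed genus $\ge 2$ surface groups via measure equivalence with a free lattice in $\Isom(\H^2)$ using Lemma \ref{lem:me}. One small caveat: for the non-orientable closed case the double-cover detour is unnecessary (the paper writes $\Isom(\H^2)$, not $\Isom^+(\H^2)$, so a closed non-orientable hyperbolic surface group is directly a cocompact lattice there), and your appeal to Lemma \ref{lem:sub} points the wrong way — that lemma passes treeability down to subgroups, not up from a finite-index subgroup — though your parenthetical measure-equivalence remark does supply the correct justification.
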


\begin{proof}
If $\Gamma$ is free then this is obvious as the usual Cayley graph of $\Gamma$ is a tree. If $\Gamma$ is amenable then this is a well-known consequence of the fact there is a unique hyperfinite $II_1$-equivalence relation \cite{OW80} (see also \cite[Ch III, Proposition 30.1]{KM04} to see the connection). If $\Gamma$ is the fundamental group of a closed surface of genus $\ge 2$ then $\Gamma$ is measure-equivalent to a free group since $\Gamma$ can be realized as a lattice in $\Isom(\H^2)$ (and so can any finite rank nonamenable free group). Lemma \ref{lem:me} now implies  $\Gamma$ is treeable.
\end{proof}

\begin{lem}\label{lem:almost-treeable}
Lattices in $\Isom(\H^3)$ are almost treeable.
\end{lem}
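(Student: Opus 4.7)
By Lemma \ref{lem:me}, almost treeability is a measure-equivalence invariant, and any two lattices in the same locally compact group are measure equivalent; hence it suffices to exhibit a single almost treeable lattice in $\Isom(\H^3)$. We take $\Gamma = \pi_1(M)$ where $M$ is the complement of the figure-eight knot in $S^3$. Then $M$ is a finite-volume hyperbolic 3-manifold fibering over the circle with once-punctured torus fiber, so $\Gamma$ is a non-uniform lattice in $\Isom(\H^3)$ sitting in a short exact sequence
$$1 \to F \to \Gamma \to \Z \to 1$$
with $F \cong F_2$ a rank-2 free group. By Lemma \ref{lem:treeable}, $F$ is treeable. Fix $t \in \Gamma$ lifting a generator of the quotient $\Z$.

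By Proposition \ref{prop:eq-rel}, it suffices to exhibit a free pmp action $\Gamma \curvearrowright (X,\mu)$ whose orbit equivalence relation $E$ is almost treeable. Take any free pmp $\Gamma$-action. By Proposition \ref{prop:eq-rel} applied to $F$, the $F$-orbit relation is treeable (mod $\mu$) by some Borel symmetric treeing $T \subset E|_F$. For each $n$, apply the Rokhlin lemma to the free $\langle t\rangle$-action to obtain a Borel set $A_n \subset X$ with $A_n, tA_n, \dots, t^{n-1}A_n$ pairwise disjoint and of total measure at least $1 - 1/n$. Choose a Borel transversal to the $F$-equivalence relation on each level $t^k A_n$, and for $0 \le k \le n-2$ adjoin to $T$, for each $F$-orbit $O$ that meets $t^k A_n$, a single edge $\{y,ty\}$ where $y$ is the transversal representative of $O \cap t^k A_n$. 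Let $H_n$ be the resulting Borel symmetric subset of $E$. For a.e.\ $x$ and every $g = t^j f \in \Gamma$, once $n > |j|$ the points $x, tx, \dots, t^j x$ all lie in the Rokhlin tower and are joined to $t^j x$ by an $H_n$-path of $t$-edges, while $g x = t^j f x$ is joined to $t^j x$ by an $F$-translate of a $T$-path, so $(x,gx)$ lies in a single $H_n$-component.

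\textbf{Main obstacle.} The delicate step is to verify $H_n$ is an honest forest, not merely one almost surely. The naive attempt of adding $\{y, ty\}$ for every $y \in t^k A_n$ creates cycles: whenever two points $y,y' \in Fx \cap t^k A_n$ both contribute edges, the pair $\{y,ty\},\{y',ty'\}$ together with $T$-paths inside $Fx$ and inside $Ftx$ closes a loop. The Borel transversal kills these duplications by allowing at most one $t$-edge between each pair of adjacent $F$-orbits. Because $F \triangleleft \Gamma$ and $t$ has infinite order in $\Gamma/F \cong \Z$, no $F$-orbit satisfies $Ft^k x = Fx$ for $k \ne 0$, so within each $\Gamma$-orbit the collection of $F$-cosets is totally ordered by $\Z$ and the added $t$-edges form a disjoint union of $\Z$-chains. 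Combined with the fact that $T$ is a forest confined to $F$-orbits, this yields the forest property for $H_n$ on the nose, completing the proof that $\Gamma$ is almost treeable.
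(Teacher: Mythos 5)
Your high-level plan is sound and parallels the paper: both reduce via Lemma \ref{lem:me} to a single fibered lattice $F_r\rtimes\Z$, tree the free fiber subgroup, and introduce randomly-placed cuts along the $\Z$-direction so that the resulting random subgraph is a forest whose components grow as the cut spacing goes to infinity. The paper carries this out directly on the Cayley graph of $\Lambda=F_r\rtimes_\theta\Z$ with a uniformly random phase $i\in\{0,\ldots,p-1\}$: $F_r$-tree edges appear only at $\Z$-levels $\equiv i\pmod p$, and vertical edges appear at all levels except one cut per block, so every block is a copy of the $F_r$-tree with vertical chains of length $p$ hanging off, manifestly a forest. You translate this to the orbit-equivalence-relation picture using a Rokhlin tower for $\langle t\rangle$, which is a legitimate alternative route, but your implementation has two genuine gaps.

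First, the ``Borel transversal to the $F$-equivalence relation on each level $t^kA_n$'' does not exist. A Borel transversal for a countable Borel equivalence relation exists if and only if the relation is smooth, and the orbit equivalence relation of a free pmp action of the nonamenable group $F\cong F_2$ is nowhere smooth, even after restriction to a positive-measure Borel set such as $t^kA_n$. So the representatives $y$ cannot be selected in a Borel fashion, and $H_n$ is not defined.

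Second, even granting the transversals, $H_n$ would not be a forest, contrary to what you claim in the ``main obstacle'' paragraph. You keep the full treeing $T$ of $E|_F$ and then add one $t$-edge for each pair $(k,O)$ with $O\cap t^kA_n\neq\emptyset$ and $k\le n-2$. A single $F$-orbit $O$ inside a $\Gamma$-orbit typically meets several levels $t^{k_1}A_n,\ldots,t^{k_r}A_n$ of the tower (the tower levels are $\langle t\rangle$-equidistributed Borel sets of measure roughly $1/n$, and $F$-orbits are spread across them), so you add $r$ distinct $t$-edges from $O$ to $tO$, namely $\{y_{k_1},ty_{k_1}\},\ldots,\{y_{k_r},ty_{k_r}\}$. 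Any two of these, closed up with a $T$-path inside $O$ from $y_{k_a}$ to $y_{k_b}$ and a $T$-path inside $tO$ from $ty_{k_b}$ to $ty_{k_a}$, form a cycle. Thus the ``disjoint union of $\Z$-chains'' description of the added edges is false; what you actually build is a multigraph on the $\Z$-indexed family of $F$-orbits, and the multiplicities create loops.

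A correct version of your equivalence-relation approach has to avoid choosing representatives altogether and must not keep $T$ at every level. One way is to pass to the skew product $X'=X\times(\Z/p\Z)$ with $\Gamma$ acting by $g\cdot(x,i)=(gx,\,i+c(x,gx)\bmod p)$, where $c:E_\Gamma\to\Z$ is the cocycle recording the $\Gamma/F\cong\Z$ displacement; then put $T$-edges only on the fiber $X\times\{0\}$ and $t$-edges $\{(x,i),(tx,i+1)\}$ for all $x$ and all $i\neq p-1$. This is exactly the paper's group-level construction re-expressed for orbit relations: every $F$-orbit appears with a unique phase, so there are no transversal choices, and each block again consists of one $F$-tree with disjoint vertical chains of length $p$ hanging off it. With that modification your connectivity estimate goes through and the lemma follows; as written, though, the construction of $H_n$ does not produce a Borel forest.
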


\begin{proof}
Let $\Lambda < \Isom(\H^3)$ be a lattice such that $\H^3/\Lambda$ is a manifold which fibers over a circle with fiber a noncompact surface. It is well-known that such lattices exist (see e.g., \cite{Jo77}). Note that $\Lambda$ can be expressed as $\Lambda = F_r \rtimes_\theta \Z$ where $F_r$ denotes the free group of some rank $r \ge 2$ and $\theta:F_r \to F_r$ is an automorphism. We can therefore write elements of $\Lambda$ as pairs $(f, n)$ with $f\in F_r, n \in \Z$ subject to the multiplication rule
$$(f,n)(g,m) = (f \theta^n (g), n+m).$$
Now let $p>0$ be an integer and let $i$ be a uniformly random integer in $\{0,\ldots, p-1\}$. 

Let $S=\{s_1,\ldots, s_r\} \subset F_r$ be a free generating set. Let $E_i \in \cG(\Lambda)$ be the set containing 
\begin{itemize}
\item $\{ (f, m), (fs_j,m) \}$ for every $f \in F_r, 1\le j \le r$ and $m \in \Z$ with $p \mid (m -i)$;
\item $\{ (f,m), (f, m+1)\}$ for every $f\in F_r$ and $m \in \Z$ with $p \nmid (m-i-1)$.
\end{itemize}
Observe that the graph with vertex set $\Lambda$ and edge set $E_i$ is a forest. Moreover, the law of $E_i$ is an invariant probability measure $\lambda_p$ on $\cG(\Lambda)$. Finally, for any $(f,n),(g,m) \in \Lambda$ with $n\le m$, $(f,n), (g,m)$ are in the same connected component of $(\Lambda,E_i)$ if and only if there does not exist an integer $q$ with $n \le q < m$ such that $p \nmid (q-i-1)$. This occurs with probability equal to $\frac{p - |m-n|}{p}$ if $|m-n| \le p$. In particular, this probability tends to $1$ as $p\to\infty$. This implies $\Lambda$ is almost treeable. 

By Lemma \ref{lem:me}, it follows that every lattice in $\Isom(\H^3)$ is almost treeable.
\end{proof}


\begin{lem}\label{lem:1-dim}
If $\Gamma'$ is a subgroup of the fundamental group $\Gamma$ of a complete finite-volume hyperbolic $3$-manifold then $b^{(2)}_d(\Gamma')=0$ for every $d\ge 2$.
\end{lem}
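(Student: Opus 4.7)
The plan is to chain together the three lemmas already stated in this section. Since $\Gamma$ is the fundamental group of a complete finite-volume hyperbolic 3-manifold, $\Gamma$ is a torsion-free lattice in $\Isom(\H^3)$, so Lemma \ref{lem:almost-treeable} applies and $\Gamma$ is almost treeable. Any subgroup of an almost treeable group is almost treeable by Lemma \ref{lem:sub}, so $\Gamma'$ is almost treeable. Finally, Lemma \ref{lem:Gab} gives $b^{(2)}_d(\Gamma') = 0$ for every $d \geq 2$.

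In more detail: the first step just appeals to the identification of fundamental groups of complete finite-volume hyperbolic $3$-manifolds with torsion-free lattices in $\Isom(\H^3)$ via the developing map. The second step is Lemma \ref{lem:sub}, which in turn follows from Proposition \ref{prop:eq-rel} (realizing almost treeability as a property of orbit equivalence relations of free pmp actions) together with the fact that restricting a free pmp $\Gamma$-action to $\Gamma'$ produces an almost treeable subrelation. The third step is a direct application of Gaboriau's theorem (Lemma \ref{lem:Gab}).

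There is no real obstacle here since the heavy lifting has been done in the preceding lemmas; the only subtlety is making sure the statement is applied to all subgroups $\Gamma'$ of $\Gamma$, not only the finitely generated ones. But almost treeability is defined for arbitrary countable groups via invariant measures on $\cG(\Gamma')$, and both Lemma \ref{lem:sub} and Lemma \ref{lem:Gab} are stated without finite generation hypotheses, so the argument goes through verbatim.
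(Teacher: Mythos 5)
Your proof is correct and is essentially identical to the paper's: both chain Lemma \ref{lem:almost-treeable} (lattices in $\Isom(\H^3)$ are almost treeable), Lemma \ref{lem:sub} (subgroups inherit almost treeability), and Lemma \ref{lem:Gab} (almost treeable groups have vanishing $L^2$-Betti numbers in degrees $\ge 2$). Your added remark about the argument applying to arbitrary, not necessarily finitely generated, subgroups is a harmless and accurate observation.
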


\begin{proof}
This is true because $\Gamma$ is almost treeable by Lemma \ref{lem:almost-treeable}, every subgroup of an almost treeable group is almost treeable by Lemma \ref{lem:sub} and any almost treeable group $\Lambda$ has $b_d^{(2)}(\Lambda)=0$ for every $d \ge 2$ by Lemma \ref{lem:Gab}.
\end{proof}

\begin{proof}[Proof of Corollary \ref{cor:app1}]
This follows from Theorem \ref{thm:main-manifold} and Proposition \ref{prop:3m}.
\end{proof}



\appendix

\section{Pointed subsets and measures of a metric space}

 
The purpose of this appendix is to prove Theorem \ref{thm:Mn}. We begin by studying pointed measures and pointed subspaces of a given metric space $Z$ and their limits.
 

 \begin{defn}
 A {\em pointed measure} on a topological space $Z$ is a pair $(\mu,p)$ where $p\in Z$ and $\mu$ is a Borel measure on $Z$. A {\em pointed subset} of $Z$ is a pair $(X,p)$ where  $X \subset Z$ and $p\in Z$.
 \end{defn}
 
 \begin{defn}
 Given a subset $F$ of a metric space $Z$, let $N_Z^o(F,\epsilon)$ denote the open $\epsilon$-neighborhood of $F$ in $Z$.
 \end{defn}
 
 \begin{defn}\label{defn:pointed-Hausdorff}
 We say that two pointed measures $(\mu_1,p_1),(\mu_2,p_2)$ on a metric space $Z$ are {\em $(\epsilon,R)$-related} if for every closed $F_i \subset B_Z(p_i,R)$,
 \begin{eqnarray*}
 \mu_1(F_1) < \mu_2 (N_Z^o(F_1,\epsilon))+\epsilon, \quad \mu_2(F_2) < \mu_1(N_Z^o(F_2,\epsilon))+\epsilon
 \end{eqnarray*}
 and $\dist_Z(p_1,p_2)<\epsilon$. We say two pointed subsets $(X_1,p_1),(X_2,p_2)$ of $Z$ are {\em $(\epsilon,R)$-related} if $\dist_Z(p_1,p_2)<\epsilon$ and 
$$B_Z( p_1, R) \cap X_1 \subset N_Z^o(  X_2, \epsilon), \quad B_Z( p_2, R) \cap X_2 \subset N_Z^o(  X_1, \epsilon).$$
A sequence $\{(X_i,p_i)\}_{i=1}^\infty$ of pointed closed subsets of $Z$ converges to $(X_\infty,p_\infty)$ in the {\em pointed Hausdorff topology} if for every $\epsilon,R>0$, there is an $I$ such that $i>I$ implies $(X_i,p_i)$ and $(X_\infty,p_\infty)$ are $(\epsilon,R)$-related.
\end{defn}

 \begin{lem}\label{lem:related}
 If pointed measures $(\mu_1,p_1),(\mu_2,p_2)$ are $(\epsilon_1,R_1)$-related and $(\mu_2,p_2),(\mu_3,p_3)$ are $(\epsilon_2,R_2)$-related then $(\mu_1,p_1), (\mu_3,p_3)$ are $(\epsilon_1+\epsilon_2, R_3)$-related where $R_3=\min\{R_1-2\epsilon_2, R_2-2\epsilon_1\}$. Similarly, if $(X_1,p_1),(X_2,p_2)$ are $(\epsilon_1,R_1)$-related pointed subsets and $(X_2,p_2), (X_3,p_3)$ are $(\epsilon_2,R_2)$-related pointed subsets then $(X_1,p_1),(X_3,p_3)$ are $(\epsilon_1+\epsilon_2, R_3)$-related.
 \end{lem}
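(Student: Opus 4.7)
The plan is a routine triangle-inequality-style argument in two parts; I handle the pointed-subsets case first because it is conceptually cleaner, then the pointed-measures case, which requires an approximation from inside by closed sets.

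For pointed subsets, I would take any $x \in B_Z(p_1,R_3) \cap X_1$. Since $R_3 \le R_1$, the $(\epsilon_1,R_1)$-relation between $(X_1,p_1)$ and $(X_2,p_2)$ produces some $y \in X_2$ with $\dist_Z(x,y) < \epsilon_1$. The key geometric step is to check that $y \in B_Z(p_2,R_2)$: using $\dist_Z(p_1,p_2)<\epsilon_1$ and $R_3 \le R_2 - 2\epsilon_1$, one gets
\[
\dist_Z(y,p_2) \le \dist_Z(y,x) + \dist_Z(x,p_1) + \dist_Z(p_1,p_2) < \epsilon_1 + R_3 + \epsilon_1 \le R_2.
\]
Then the $(\epsilon_2,R_2)$-relation between $(X_2,p_2)$ and $(X_3,p_3)$ yields $z \in X_3$ with $\dist_Z(y,z)<\epsilon_2$, whence $\dist_Z(x,z) < \epsilon_1+\epsilon_2$, i.e.\ $x \in N^o_Z(X_3,\epsilon_1+\epsilon_2)$. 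The reverse inclusion is symmetric (using $R_3 \le R_1 - 2\epsilon_2$), and $\dist_Z(p_1,p_3) < \epsilon_1+\epsilon_2$ is the ordinary triangle inequality.

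For pointed measures, I would start from the first relation: for closed $F_1 \subset B_Z(p_1,R_3) \subset B_Z(p_1,R_1)$,
\[
\mu_1(F_1) < \mu_2(N^o_Z(F_1,\epsilon_1)) + \epsilon_1.
\]
To feed this into the second relation I need a \emph{closed} subset of $B_Z(p_2,R_2)$, so for each small $\delta>0$ I would set
\[
F_2^\delta := \{\,z \in Z : \dist_Z(z,F_1) \le \epsilon_1 - \delta\,\},
\]
which is closed, increases to the open neighborhood $N^o_Z(F_1,\epsilon_1)$ as $\delta \searrow 0$, and satisfies $F_2^\delta \subset B_Z(p_2,R_2)$ by exactly the same $\epsilon_1 + R_3 + \epsilon_1 \le R_2$ calculation as above. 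Applying the $(\epsilon_2,R_2)$-relation gives
\[
\mu_2(F_2^\delta) < \mu_3(N^o_Z(F_2^\delta,\epsilon_2)) + \epsilon_2 \le \mu_3(N^o_Z(F_1,\epsilon_1+\epsilon_2)) + \epsilon_2,
\]
and sending $\delta \searrow 0$ combined with continuity of $\mu_2$ along the increasing union yields $\mu_2(N^o_Z(F_1,\epsilon_1)) \le \mu_3(N^o_Z(F_1,\epsilon_1+\epsilon_2)) + \epsilon_2$. Chaining with the first inequality gives $\mu_1(F_1) < \mu_3(N^o_Z(F_1,\epsilon_1+\epsilon_2)) + \epsilon_1+\epsilon_2$. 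The symmetric inequality uses $R_3 \le R_1 - 2\epsilon_2$, and $\dist_Z(p_1,p_3)<\epsilon_1+\epsilon_2$ is again immediate.

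The only mildly technical step is the need to approximate the open neighborhood $N^o_Z(F_1,\epsilon_1)$ by the closed sets $F_2^\delta$ in the measure case, since the definition of $(\epsilon,R)$-related requires the test set to be closed; everything else is careful bookkeeping of the radii with the two hypotheses $R_3 \le R_1 - 2\epsilon_2$ and $R_3 \le R_2 - 2\epsilon_1$ ensuring that neighborhoods of test sets stay inside the relevant balls.
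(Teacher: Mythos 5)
Your proof is correct and follows essentially the same chain-of-inequalities route as the paper's. The one place you are more careful than the paper: in the measure case, the paper applies the $(\epsilon_2,R_2)$-relation directly to $N_Z^o(F,\epsilon_1)$, but the definition of $(\epsilon,R)$-related only speaks of \emph{closed} test sets; your approximation by the closed sets $F_2^\delta$ and passage to the limit via monotone convergence of $\mu_2$ fills that small gap (noting that the strict inequality survives because it comes from the \emph{first} inequality $\mu_1(F)<\mu_2(N_Z^o(F,\epsilon_1))+\epsilon_1$, even though the limit of the $\delta$-inequalities is only $\le$). The radius bookkeeping ($R_3+2\epsilon_1\le R_2$ in one direction, $R_3+2\epsilon_2\le R_1$ in the other) matches the paper exactly, and your explicit write-up of the pointed-subset case is the argument the paper leaves as ``similar.''
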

 
 \begin{proof}
 Let $F \subset B_Z(p_1,R_3)\subset B_Z(p_1,R_1)$ be closed. Then 
 $$N_Z^o(F,\epsilon_1) \subset B_Z(p_1,R_3+\epsilon_1)  \subset B_Z(p_2,R_3+2\epsilon_1) \subset B_Z(p_2,R_2).$$
 Therefore,
  \begin{eqnarray*}
\mu_1(F) &<& \mu_2(N_Z^o(F,\epsilon_1))+\epsilon_1  < \mu_3(N_Z^o( N_Z^o(F,\epsilon_1), \epsilon_2)) +\epsilon_1+ \epsilon_2\\
&\le& \mu_3( N_Z^o(F,\epsilon_1+\epsilon_2))+\epsilon_1+\epsilon_2.
 \end{eqnarray*}
 The other inequality is similar. The result for pointed subsets is similar.
 \end{proof}

\begin{lem}\label{lem:key-Mn}
Let $Z$ be a proper metric space. Let $(\mu_i,p_i)$ (for $1\le i \le \infty$) be pointed Radon measures of $Z$ with $\lim_{i\to\infty} p_i = p_\infty$. Then $\lim_{i\to\infty} \mu_i = \mu_\infty$ in the weak* topology if and only if for every $\epsilon,R>0$ there exists $I$ such that $i>I$ implies $(\mu_i,p_i)$ and $(\mu_\infty,p_\infty)$ are $(\epsilon,R)$-related. 
\end{lem}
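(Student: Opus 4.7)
The plan is to prove the two implications separately; both rely on the properness of $Z$ to reduce global statements to finite data and to finite measures on compact neighborhoods.

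\textbf{Forward direction.} Assume $\mu_i \to \mu_\infty$ in the weak* topology and fix $\epsilon, R > 0$. Since $Z$ is proper, the closed ball $\bar B := \bar B_Z(p_\infty, R+1)$ is compact, so I can cover it by finitely many open balls $V_1, \ldots, V_N$ of radius $\epsilon/10$. For each subset $S \subseteq \{1, \ldots, N\}$ set $K_S := \bigcup_{j \in S} \bar V_j$ and use Urysohn's lemma to pick $\phi_S \in C_c(Z)$ with $\mathbf{1}_{K_S} \le \phi_S \le \mathbf{1}_{N_Z^o(K_S, \epsilon/10)}$. Since the family $\{\phi_S\}_S$ is finite, weak* convergence provides an index $I$ such that $|\int \phi_S \, d\mu_i - \int \phi_S \, d\mu_\infty| < \epsilon/2$ for every $S$ and every $i > I$; I take $I$ large enough to also force $\dist_Z(p_i, p_\infty) < \epsilon$, so that $B_Z(p_i, R) \subset \bar B$. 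Given any closed $F \subset B_Z(p_i, R)$, associate $S(F) := \{j : V_j \cap F \ne \emptyset\}$; then $F \subset K_{S(F)} \subset N_Z^o(F, \epsilon/3)$ because each $\bar V_j \subset K_{S(F)}$ has diameter at most $\epsilon/5$ and meets $F$. The chain
\[
\mu_i(F) \le \int \phi_{S(F)} \, d\mu_i < \int \phi_{S(F)} \, d\mu_\infty + \tfrac{\epsilon}{2} \le \mu_\infty(N_Z^o(K_{S(F)}, \tfrac{\epsilon}{10})) + \tfrac{\epsilon}{2} < \mu_\infty(N_Z^o(F, \epsilon)) + \epsilon
\]
yields one of the required inequalities; the other follows by exchanging the roles of $\mu_i$ and $\mu_\infty$.

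\textbf{Reverse direction.} Assume the $(\epsilon, R)$-related condition holds eventually for every $\epsilon, R > 0$, and fix $f \in C_c(Z)$ with $M := \|f\|_\infty$ and $\supp(f) \subset B_Z(p_\infty, R_0)$. For $\eta > 0$ pick $\epsilon \in (0, 1)$ small enough that $|f(x) - f(y)| < \eta$ whenever $\dist_Z(x, y) < \epsilon$, and take $i$ large enough that the measures are $(\epsilon, R_0 + 1)$-related with $\dist_Z(p_i, p_\infty) < \epsilon$. For every $t > 0$ the level sets $F_t^\pm := \{\pm f \ge t\}$ are compact subsets of $\supp(f) \subset B_Z(p_i, R_0 + 1)$, so $\mu_i(F_t^\pm) < \mu_\infty(N_Z^o(F_t^\pm, \epsilon)) + \epsilon$. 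Uniform continuity forces $N_Z^o(F_t^\pm, \epsilon) \subset F_{t - \eta}^\pm$ when $t > \eta$, while for $t \in (0, \eta]$ I use only the crude bound $\mu_\infty(N_Z^o(F_t^\pm, \epsilon)) \le L := \mu_\infty(N_Z^o(\supp(f), \epsilon)) < \infty$. Integrating the layer-cake identity $\int f^\pm \, d\mu = \int_0^M \mu(F_t^\pm) \, dt$ then gives
\[
\int f^\pm \, d\mu_i \le \eta(L + \epsilon) + \int_\eta^M \left[\mu_\infty(F_{t-\eta}^\pm) + \epsilon\right] dt \le \int f^\pm \, d\mu_\infty + \eta L + M \epsilon,
\]
and the reverse inequality follows by symmetry. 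Since $\eta$ (and with it $\epsilon = \epsilon(\eta)$) may be taken arbitrarily small, this forces $\int f \, d\mu_i \to \int f \, d\mu_\infty$.

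The hard part is the forward direction, specifically the demand for uniformity over the continuum of closed sets $F \subset B_Z(p_i, R)$. Replacing each such $F$ by its combinatorial shadow $S(F) \subseteq \{1, \ldots, N\}$ and exploiting that there are only $2^N$ shadows is what converts pointwise weak* convergence of the bump integrals into the uniform quantitative control demanded by the $(\epsilon, R)$-relation. Properness of $Z$ is used precisely to guarantee that such a finite cover exists.
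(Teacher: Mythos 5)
Your forward direction is essentially the paper's argument in a slightly different outfit: the paper builds the finite family of bump functions by summing a partition of unity over subsets of a finite cover, while you apply Urysohn directly to each of the $2^N$ unions $K_S$; both exploit that only finitely many combinatorial shadows occur, so weak* convergence of finitely many integrals controls all closed $F$. The reverse direction genuinely departs from the paper. The paper first proves, as a separate claim, that $\mu_i(N_Z^o(S,\epsilon_i)) \to \mu_\infty(S)$ for compact $S$, and then runs a Riemann-sum argument over levels $\alpha_t$ chosen so $\mu_\infty\bigl(\{f=\alpha_t\}\bigr)=0$; you instead invoke the layer-cake identity and a uniform-continuity modulus $\eta$. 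Your route is cleaner in spirit, but it leaves a real gap in the line \emph{``and the reverse inequality follows by symmetry.''}

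The asymmetry is this: in the direction you write out, the crude bound for $t\in(0,\eta]$ is $L := \mu_\infty(N_Z^o(\supp f,\epsilon))$, a single fixed number. In the ``symmetric'' direction, the corresponding quantity is $L_i' := \mu_i(N_Z^o(\supp f,\epsilon))$, which depends on $i$ (and on $\epsilon=\epsilon(\eta)$). For the final step --- sending $\eta\to 0$ after $i\to\infty$ --- to force $\int f\,d\mu_i \to \int f\,d\mu_\infty$, you need $\sup_{i>I(\eta)} L_i'$ to stay bounded as $\eta\to 0$. This is true, but it requires an extra application of the $(\epsilon,R_0+1)$-relation: with $\epsilon<1/2$, the compact set $\bar B_Z(p_\infty, R_0+1/2)$ contains $N_Z^o(\supp f,\epsilon)$ and sits inside $B_Z(p_i,R_0+1)$, so the relation yields
\[
L_i' \le \mu_i\bigl(\bar B_Z(p_\infty,R_0+1/2)\bigr) < \mu_\infty\bigl(N_Z^o(\bar B_Z(p_\infty,R_0+1/2),\epsilon)\bigr)+\epsilon \le \mu_\infty\bigl(\bar B_Z(p_\infty,R_0+1)\bigr)+1,
\]
a bound independent of $i$ and $\eta$. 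Without this observation the word ``symmetry'' hides a claim that is not literally symmetric, since $\mu_i$ is precisely the quantity being controlled. Add that one line and the proof is complete; the rest of the layer-cake bookkeeping (including the need for the level sets $F_t^\pm$ to be compact, and the inclusion $N_Z^o(F_t^\pm,\epsilon)\subset F_{t-\eta}^\pm$) is correct.
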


\begin{proof}
Suppose $\lim_{i\to\infty} \mu_i = \mu_\infty$ in the weak* topology. Let $\epsilon,R>0$. Let $\cF \subset C_c(Z)$ be a finite set such that for every compact subset $F \subset B_Z(p_\infty,R+\epsilon)$ there exists $g\in \cF$ such that $g=1$ on $F$, $g=0$ on the complement of $N_Z^o(F,\epsilon)$ and $0\le g\le 1$ on all of $Z$. To see that such a set exists, let $\cO$ be any finite open cover of $B_Z(p_\infty,R+\epsilon)$ by open balls of radius $<\epsilon$. Let $\cF' = \{g_U:~U \in \cO\}$ be a partition of unity subordinate to $\cO$. Let $\cF$ be the set of all sums of the form $\sum\{ g_U:~ U \in \cO'\}$ over all subsets $\cO'\subset \cO$.  If $F \subset B_Z(p_\infty,R+\epsilon)$ is compact and $g = \sum \{g_U:~ U \in \cO, U \cap F \ne \emptyset\}$ then $g=1$ on $F$, $0\le g \le 1$ and $g=0$ on the complement of $N_Z^o(F,\epsilon)$ as required.

Let $I$ be large enough so that $i>I$ implies $\dist_Z(p_i, p_\infty) < \epsilon$ and $|\mu_i(g) - \mu_\infty(g)| < \epsilon$ for all $g \in \cF$. Let $F \subset B_Z(p_i, R)$ be closed. Then $F \subset B_Z(p_\infty, R+\epsilon)$. So there exists $g \in \cF$ as above. Observe that
\begin{eqnarray*}
\mu_i(F) &\le& \int g~d\mu_i < \epsilon +  \int g~d\mu_\infty \le \epsilon + \mu_\infty(N_Z^o(F,\epsilon)).
\end{eqnarray*}
Similarly, if $F \subset B_Z(p_\infty,R)$ then
\begin{eqnarray*}
\mu_\infty(F) &\le& \int g~d\mu_\infty < \epsilon +  \int g~d\mu_i \le \epsilon + \mu_i(N_Z^o(F,\epsilon)).
\end{eqnarray*}
This shows that $\mu_i, \mu_\infty$ are $(\epsilon,R)$-related. 

Now suppose that for every $\epsilon,R>0$ there exists $I$ such that $i>I$ implies $(\mu_i,p_i)$ and $(\mu_\infty,p_\infty)$ are $(\epsilon,R)$-related. Then there exist sequences $\{\epsilon_i\}_{i=1}^\infty, \{R_i\}_{i=1}^\infty$ such that $\lim_{i \to \infty} \epsilon_i = 0, \lim_{i\to\infty} R_i = +\infty$ and $(\mu_i,p_i)$ and $(\mu_\infty, R_\infty)$ are $(\epsilon_i,R_i)$-related.

\noindent {\bf Claim 1}. For any compact $S \subset Z$, 
$$\lim_{i\to\infty}\mu_i (N_Z^o(S,\epsilon_i)) = \mu_\infty(S).$$
\begin{proof}[Proof of Claim 1]
For all sufficiently large $i$, $S \subset B_Z(p_i, R_i-\epsilon_i) \cap B_Z(p_\infty, R_i-\epsilon_i)$. So \begin{eqnarray*}
\mu_\infty (S ) &\le&  \mu_i (N_Z^o(S,\epsilon_i)) + \epsilon_i \le \mu_\infty (N_Z^o(S,2\epsilon_i) ) + 2\epsilon_i.
\end{eqnarray*}
By taking the limit as $i\to\infty$, the claim follows. This uses the fact that $ \mu_\infty (N_Z^o(S,2\epsilon_i))$ is finite for all sufficiently large $i$ which is true because $\mu_\infty$ is Radon and $Z$ is proper.
\end{proof}

Now let $f$ be a real-valued compactly supported continuous function on $Z$. It suffices to show that $\lim_{i\to\infty} \mu_i(f) = \mu_\infty(f)$. Let $S$ denote the support of $f$ and for $\alpha<\beta$ let 
$$F(\alpha,\beta) = \{x \in Z:~ \alpha\le f(x)\le \beta\} \cap S.$$
Let $\{\alpha_t\}_{t=1}^r$ be a sequence of real numbers such that $\alpha_1 < \min \{f(x):~x\in Z\} < \alpha_2 < \cdots < \max \{f(x):~x\in Z\}<\alpha_r$ and $\mu_\infty(F(\alpha_t,\alpha_t))=0$ for every $t=1\ldots r$.


By Claim 1,
\begin{eqnarray*}
\limsup_{i\to\infty} \int f~d\mu_i &\le& \limsup_{i\to\infty} \sum_{t=1}^{r-1} \alpha_{t+1} \mu_i(N_Z^o(F(\alpha_t,\alpha_{t+1}),\epsilon_i)) \\
&=& \sum_{t=1}^{r-1} \alpha_{t+1} \mu_\infty(F(\alpha_t,\alpha_{t+1})) \le (\sup_{1\le t < r} \alpha_{t+1}-\alpha_t) \mu_\infty(S) + \int f ~d\mu_\infty.
\end{eqnarray*}
We now minimize over all such sequences $\{\alpha_t\}_{t=1}^r$ to obtain $\limsup_{i\to\infty} \int f~d\mu_i  \le \int f ~d\mu_\infty.$ Similarly,
\begin{eqnarray*}
&&\liminf_{i\to\infty} \int f~d\mu_i \\
&\ge& \liminf_{i\to\infty}  \sum_{t=1}^{r-1} \int_{N_Z^o(F(\alpha_t,\alpha_{t+1}),\epsilon_i)} f~d\mu_i -2\|f\|_\infty \sum_{t=1}^r   \mu_i(N_Z^o(F(\alpha_t,\alpha_t),\epsilon_i)) \\
&=&\liminf_{i\to\infty}  \sum_{t=1}^{r-1} \int_{N_Z^o(F(\alpha_t,\alpha_{t+1}),\epsilon_i)} f~d\mu_i \\
 &\ge& \liminf_{i\to\infty} \sum_{t=1}^{r-1} \alpha_{t} \mu_i(N_Z^o(F(\alpha_t,\alpha_{t+1}),\epsilon_i)) =\sum_{t=1}^{r-1} \alpha_{t} \mu_\infty(F(\alpha_t,\alpha_{t+1})) \\
&\ge& -(\sup_{1\le t < r} \alpha_{t+1}-\alpha_t) \mu_\infty(S) + \int f ~d\mu_\infty.
\end{eqnarray*}
Maximizing over all such sequences $\{\alpha_t\}_{t=1}^r$ and combining with the previous inequality, we obtain $\lim_{i\to\infty} \int f~d\mu_i  = \int f ~d\mu_\infty.$ Because $f$ is arbitrary, this implies $\lim_{i\to\infty} \mu_i  = \mu_\infty$ as required.
\end{proof}

 
 
 
 
  
  


\section{Metric measure spaces}

We can now define $(\epsilon,R)$-related pointed mm$^n$-spaces. This will allow us to define open neighborhoods in $\M^n$.

\begin{defn}[$(\epsilon,R)$-related mm$^n$-spaces]\label{defn:Mn-top}
We say that mm$^n$ spaces $(M_1,p_1),(M_2,p_2)$ are $(\epsilon,R)$-related if there exist a metric space $Z$ and isometric embeddings $\varphi_i:M_i \to Z$ such that 
\begin{itemize}
\item $(\varphi_1(M_1),\varphi_1(p_1))$, $(\varphi_2(M_2),\varphi_2(p_2))$ are $(\epsilon,R)$-related as pointed subsets of $Z$; 
\item for every $k=1\ldots n$, $((\varphi_1)_*\vol^{(k)}_{M_1},\varphi_1(p_1))$ and $((\varphi_2)_*\vol^{(k)}_{M_2},\varphi_2(p_2))$ are $(\epsilon,R)$-related as pointed measures of $Z$.
\end{itemize} 
Let $N_{\epsilon,R}(M,p)$ denote the set of all $[M',p'] \in \M^n$ such that $(M',p')$ is $(\epsilon',R')$-related to $(M,p)$ for some $\epsilon'<\epsilon$ and $R' > R$. We show below that this is an open set. 
\end{defn}

\begin{defn}
A {\em pseudo-metric} $d$ on a set $X$ is a function $d:X \times X \to [0,\infty)$ satisfying all the properties of a metric with one exception: it may happen that $d(x,y)=0$ even if $x\ne y$.
\end{defn}

\begin{lem}\label{lem:pseudo-metric}
Let $Z$ be a set equal to a disjoint union $Z = \sqcup_{i=1}^\infty M_i$ of its subsets $M_i$. Suppose that for each $i$ there is a metric $\dist_{M_i}$ on $M_i$ and there is a collection $\{L_j\}_{j\in J}$ of subsets $L_j \subset Z$ and for each $j$ there is a pseudo-metric $\dist_{L_j}$ on $L_j$. Suppose as well that if $x,y \in L_j \cap M_i$ for some $i,j$ then $\dist_{M_i}(x,y)=\dist_{L_j}(x,y)$. Lastly, we assume that for any $x,y \in Z$ there is a sequence $x=x_1,x_2,\ldots, x_n=y$ such that for each $i$ either $x_i,x_{i+1} \in M_k$ for some $k$ or $x_i, x_{i+1} \in L_j$ for some $j$. Then there is a pseudo-metric $\dist_Z$ on $Z$ such that
\begin{itemize}
\item $\dist_Z(x,y)  = \dist_{M_i}(x,y)$ for any $x,y \in M_i$, for any $i$;
\item $\dist_Z(x,y) \le \dist_{L_j}(x,y)$ for any $x,y \in L_j$ for any $j$.
\end{itemize}
\end{lem}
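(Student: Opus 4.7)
The plan is to define $\dist_Z$ as an infimum over admissible chain lengths. Call a sequence $x = x_1, x_2, \ldots, x_n = y$ together with, for each $k \in \{1,\ldots,n-1\}$, a choice of an ambient set $A_k$ (either some $M_{i_k}$ or some $L_{j_k}$) containing both $x_k$ and $x_{k+1}$, an \emph{admissible chain} from $x$ to $y$. Its length is $\sum_{k=1}^{n-1}\dist_{A_k}(x_k, x_{k+1})$ where $\dist_{A_k}$ is whichever of $\dist_{M_{i_k}}$ or $\dist_{L_{j_k}}$ corresponds. Set
\[
\dist_Z(x,y) := \inf\bigl\{\,\mathrm{length}(\gamma):~\gamma\text{ an admissible chain from }x\text{ to }y\,\bigr\}.
\]
The connectivity hypothesis guarantees that admissible chains exist between any two points, so this infimum is over a non-empty set of non-negative real numbers.

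First I would verify that $\dist_Z$ is a pseudo-metric. Non-negativity and $\dist_Z(x,x)=0$ are immediate (take any trivial chain), symmetry follows because admissible chains can be reversed, and the triangle inequality is obtained by concatenation: given chains from $x$ to $y$ and from $y$ to $z$, their concatenation is an admissible chain from $x$ to $z$ whose length is the sum of the two lengths. Next, for $x,y\in L_j$ (resp.\ $x,y\in M_i$), the one-step admissible chain $(x,y)$ with ambient set $L_j$ (resp.\ $M_i$) gives $\dist_Z(x,y)\le \dist_{L_j}(x,y)$ (resp.\ $\le \dist_{M_i}(x,y)$), which is the second bullet of the conclusion and half of the first.

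The key remaining step is the reverse inequality $\dist_Z(x,y)\ge \dist_{M_i}(x,y)$ for $x,y\in M_i$. Here the compatibility hypothesis $\dist_{L_j}|_{L_j\cap M_i}=\dist_{M_i}|_{L_j\cap M_i}$ is crucial: it ensures that any step of an admissible chain whose endpoints happen to both lie in $M_i$ has length at least the corresponding $\dist_{M_i}$-distance between those endpoints, regardless of which ambient $A_k$ was used. The argument I would run is to show that any admissible chain from $x$ to $y$ can be ``compressed'' without increasing its length so that each successive pair of endpoints in $M_i$ is linked by a single admissible step; one then applies the triangle inequality in $M_i$. Concretely, I would proceed by induction on the number of indices $k$ for which the endpoint $x_k$ lies in $M_i$: using the triangle inequality within each ambient set $A_k$ and the compatibility condition on $L_j\cap M_i$, one replaces any subchain leaving $M_i$ at $x_a\in M_i$ and returning at $x_b\in M_i$ with the single step $x_a\to x_b$ in $M_i$, whose length is bounded by the subchain's length.

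The main obstacle is precisely this compression step. A subchain from $x_a\in M_i$ to $x_b\in M_i$ passing through some $M_k$'s and $L_j$'s need not traverse a common $L_j$ containing both $x_a$ and $x_b$, so the compatibility hypothesis is not directly applicable to the endpoints. The fix I plan is to show, by induction on chain length, that the infimal length of admissible chains between any two specified elements of $M_i$ equals $\dist_{M_i}$ of those elements; the inductive step handles the case where the chain passes through some intermediate $M_i$-point (applying the inductive hypothesis to the two sub-chains and the triangle inequality in $M_i$), while the base case uses the compatibility hypothesis directly. Once this is established, taking the infimum over all admissible chains from $x$ to $y$ in $M_i$ gives $\dist_Z(x,y)\ge \dist_{M_i}(x,y)$, completing the proof.
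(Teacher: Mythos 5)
You construct $\dist_Z$ exactly as the paper does, as the infimum of the weighted lengths of admissible chains, and the paper's proof is simply the assertion that ``it is easy to check that the conclusions hold.'' You are right that the only non-trivial claim is the lower bound $\dist_Z(x,y)\ge\dist_{M_i}(x,y)$ for $x,y\in M_i$, and you correctly flag this as the genuine obstacle.

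The induction you propose does not, however, close the gap. Your inductive step treats chains that revisit $M_i$ at an interior vertex, and your base case treats one-step chains, but the remaining and decisive case is a multi-step chain from $x\in M_i$ to $y\in M_i$ whose interior vertices all lie outside $M_i$. The compatibility hypothesis gives you nothing there, since it constrains $\dist_{L_j}$ only on pairs of points that both lie in $L_j\cap M_i$, and the interior vertices do not lie in $M_i$. In fact no proof can work, because the lemma as stated is false. Take $Z=\{a,b,c\}$, $M_1=\{a,b\}$ with $\dist_{M_1}(a,b)=10$, $M_2=\{c\}$, $L_1=\{a,c\}$ with $\dist_{L_1}(a,c)=1$, and $L_2=\{b,c\}$ with $\dist_{L_2}(b,c)=1$. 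Every $L_j\cap M_i$ is a singleton, so the compatibility hypothesis is vacuous, and chain-connectivity holds, yet any pseudo-metric satisfying $\dist_Z(a,c)\le 1$ and $\dist_Z(c,b)\le 1$ must have $\dist_Z(a,b)\le 2<10$ by the triangle inequality, so the first bullet of the conclusion is unattainable. Correcting the statement requires strengthening the hypotheses so as to rule out such short detours, for instance by imposing some cross-compatibility among the $\dist_{L_j}$'s; the application in Lemma \ref{lem:convergent2}, which also relies on this lemma, should be re-examined with that issue in mind rather than treated as a black box.
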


\begin{proof}
For each $x,y \in Z$ we define $\dist_Z(x,y) = \inf \sum_{k=1}^r \dist_{N_k}(x_k,x_{k+1})$ where the infimum is over all sequences $x=x_1,\ldots, x_r=y$ and choices $N_k \in \{M_i\}_{i=1}^\infty \cup \{L_j\}_{j\in J}$ such that $x_k,x_{k+1} \in N_k$ for all $1\le k < r$. It is easy to check that the conclusions hold.
\end{proof}

\begin{lem}\label{lem:convergent2}
Suppose that $\{(M_i,p_i)\}_{i=1}^\infty$ is a sequence of mm$^n$ spaces such that $(M_i,p_i)$ and $(M_{j},p_{j})$ are $(\epsilon_{ij},R_{ij})$-related for all $i,j$ (where $\epsilon_{ij},R_{ij}$ are positive real numbers). Then there exist a complete separable metric space $Z$ and isometric embeddings $\varphi_i:M_i \to Z$ such that for all $i,j,k$
\begin{itemize}
\item $(\varphi_i(M_i),\varphi_i(p_i))$, $(\varphi_j(M_j),\varphi_j(p_j))$ are $(\epsilon_{ij},R_{ij})$-related as pointed subsets of $Z$; 
\item  $((\varphi_i)_*\vol_{M_i}^{(k)},\varphi_i(p_i))$ and  $((\varphi_{j})_*\vol_{M_{j}}^{(k)},\varphi_j(p_j))$ are $(\epsilon_{ij},R_{ij})$-related as pointed measures of $Z$.
\end{itemize}
\end{lem}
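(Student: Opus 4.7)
The plan is to glue all the pairwise witnessing spaces into a single pseudo-metric space via Lemma \ref{lem:pseudo-metric}, then pass to its metric quotient and completion. For each unordered pair $\{i,j\}$, the hypothesis yields a metric space $Z_{ij}$ and isometric embeddings $\iota^{ij}_i\colon M_i\to Z_{ij}$, $\iota^{ij}_j\colon M_j\to Z_{ij}$ realizing the $(\epsilon_{ij},R_{ij})$-relation. On the disjoint union $W:=\sqcup_{i}M_i$, declare $L_{ij}:=M_i\sqcup M_j$ and equip it with the pseudo-metric $d_{ij}$ inherited from $Z_{ij}$ via $\iota^{ij}_i\sqcup\iota^{ij}_j$. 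Lemma \ref{lem:pseudo-metric} then produces a pseudo-metric $\dist_W$ on $W$ that restricts to $\dist_{M_i}$ on each $M_i$ and satisfies $\dist_W(x,y)\le d_{ij}(x,y)$ for $x\in M_i,y\in M_j$. Let $Z$ be the completion of the metric quotient $W/\{\dist_W=0\}$, and let $\varphi_i\colon M_i\to Z$ be the induced map. Because $\dist_W$ is already a genuine metric on each $M_i$, $\varphi_i$ is an isometric embedding; $Z$ is complete by construction and separable because $W$ is a countable union of separable mm-spaces.

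To verify the pointed subset condition: suppose $\varphi_i(x)\in B_Z(\varphi_i(p_i),R_{ij})\cap\varphi_i(M_i)$. Isometricity of $\varphi_i$ gives $\iota^{ij}_i(x)\in B_{Z_{ij}}(\iota^{ij}_i(p_i),R_{ij})$, so the relation in $Z_{ij}$ produces $y\in M_j$ with $\dist_{Z_{ij}}(\iota^{ij}_i(x),\iota^{ij}_j(y))<\epsilon_{ij}$. The inequality $\dist_Z\le\dist_{Z_{ij}}$ then forces $\dist_Z(\varphi_i(x),\varphi_j(y))<\epsilon_{ij}$, whence $\varphi_i(x)\in N_Z^o(\varphi_j(M_j),\epsilon_{ij})$; the base-point inequality $\dist_Z(\varphi_i(p_i),\varphi_j(p_j))<\epsilon_{ij}$ is obtained identically.

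For the pointed measure condition, fix $k\le n$ and a closed $F\subset B_Z(\varphi_i(p_i),R_{ij})$; set $F_i:=\varphi_i^{-1}(F)$, a closed subset of $M_i$ inside $B_{M_i}(p_i,R_{ij})$. Here the completeness of $M_i$ is crucial: it forces $\iota^{ij}_i(M_i)$ to be closed in $Z_{ij}$, so $\tilde F_i:=\iota^{ij}_i(F_i)$ is closed in $Z_{ij}$ and lies in $B_{Z_{ij}}(\iota^{ij}_i(p_i),R_{ij})$. The $(\epsilon_{ij},R_{ij})$-relation in $Z_{ij}$ yields
$$\vol^{(k)}_{M_i}(F_i)<\vol^{(k)}_{M_j}\!\bigl((\iota^{ij}_j)^{-1}N^o_{Z_{ij}}(\tilde F_i,\epsilon_{ij})\bigr)+\epsilon_{ij}.$$
One then checks the inclusion $(\iota^{ij}_j)^{-1}N^o_{Z_{ij}}(\tilde F_i,\epsilon_{ij})\subset\varphi_j^{-1}N_Z^o(F,\epsilon_{ij})$, which again follows from $\dist_Z\le\dist_{Z_{ij}}$: if $\iota^{ij}_j(y)$ is within $\epsilon_{ij}$ of some $\iota^{ij}_i(x)$ in $Z_{ij}$, then $\varphi_j(y)$ is within $\epsilon_{ij}$ of $\varphi_i(x)\in F$ in $Z$. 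Combining these gives the required bound, and the symmetric inequality is handled the same way.

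The main technical worry is that the glued pseudo-metric $\dist_W$ may be strictly smaller than $\dist_{Z_{ij}}$ on $L_{ij}$, because chains through a third $Z_{ik}$ can shorten paths. Fortunately both conditions in the definition of $(\epsilon,R)$-related only involve containments into $\epsilon$-neighborhoods and inequalities $\mu(F)<\mu'(N^o(F,\epsilon))+\epsilon$, and both are \emph{monotone} under shrinking the ambient metric — smaller distances enlarge neighborhoods, preserving the inclusions. The only genuinely delicate point is ensuring that closed sets in $Z$ pull back to closed sets in $Z_{ij}$ for the measure-comparison step, which is exactly where completeness of the $M_i$'s enters through the fact that complete subspaces of metric spaces are closed.
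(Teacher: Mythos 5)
Your proof is correct and takes essentially the same approach as the paper: glue the pairwise witnessing spaces via Lemma \ref{lem:pseudo-metric}, pass to the completion of the metric quotient, and observe that $(\epsilon_{ij},R_{ij})$-relatedness transfers because the induced maps into $Z$ are distance non-increasing. You also usefully spell out the verification that the paper leaves implicit, in particular that closed subsets of $B_Z(\varphi_i(p_i),R_{ij})$ pull back under $\varphi_i$ to closed (indeed compact, by properness) subsets of $M_i$, so their images in $Z_{ij}$ are closed and the relation there may be applied.
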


\begin{proof}
For each $i,j$, there exist a complete separable metric space $Y_{ij}$ and isometric embeddings $\phi_{ij}:M_{i} \to Y_{ij}, \psi_{ij}:M_j \to Y_{ij}$ such that
\begin{itemize}
\item $(\phi_{ij}(M_i),\phi_{ij}(p_i))$, $(\psi_{ij}(M_j),\psi_{ij}(p_j))$ are $(\epsilon_{ij},R_{ij})$-related as pointed subsets of $Y_{ij}$; 
\item  $((\phi_{ij})_*\vol_{M_i}^{(k)},\phi_{ij}(p_i))$ and  $((\psi_{j})_*\vol_{M_{j}}^{(k)}, \psi_{ij}(p_j))$ are $(\epsilon_{ij},R_{ij})$-related as pointed measures of $Y_{ij}$ for every $k$.
\end{itemize}

Let $Z'$ be the disjoint union of $M_i$ ($i=1,2,\ldots$). By Lemma \ref{lem:pseudo-metric} there exists a pseudo-metric $\dist_{Z'}$ on $Z'$ satisfying:
\begin{itemize}
\item If $x,x' \in M_i \subset Z'$ then $\dist_{Z'}(x,x') = \dist_{M_i}(x,x')$.
\item If $x_i \in M_i, x_j\in M_j$, then $\dist_{Z'}(x_i,x_j) \le  \dist_{Y_{ij}}( \phi_{ij}(x_i), \psi_{ij}(x_{j}))$.
\end{itemize}
This induces an equivalence relation on $Z'$ by: $x \sim y$ if $\dist_{Z'}(x,y)=0$. Let $Z''=Z'/\sim$ with the metric $\dist_{Z''}([x],[y]) = \dist_{Z'}(x,y)$. Let $(Z,\dist_Z)$ be the metric completion of $(Z'',\dist_{Z''})$. For each $i$ there is a canonical isometric embedding $\varphi_i:M_i \to Z$ and the union of the images of these embeddings is dense in $Z$. So $Z$ is separable. 

For any $i,j$, there is a map $\pi_{ij}:\phi_{ij}(M_i)\cup \psi_{ij}(M_j) \to Z$ such that $\pi_{ij}(\phi_{ij}(x_i)) = \varphi_i(x_i)$ if $x_i \in M_i$ and  $\pi_{ij}(\psi_{ij}(x_j)) = \varphi_j(x_j)$ if $x_j \in M_{j}$. This map is distance non-increasing: $\dist_{Y_{ij}}(x,y) \ge \dist_Z(\pi_{ij}(x),\pi_{ij}(y))$. Since $((\phi_{ij})_*\vol_{M_i}^{(k)},\phi_{ij}(p_i))$ and  $((\psi_{j})_*\vol_{M_{j}}^{(k)},\psi_{ij}(p_j))$ are $(\epsilon_{ij},R_{ij})$-related this implies $((\varphi_i)_*\vol_{M_i}^{(k)},\varphi_i(p_i))$ and  $((\varphi_{j})_*\vol_{M_{j}}^{(k)},\varphi_j(p_j))$ are $(\epsilon_{ij},R_{ij})$-related. Similarly, $(\varphi_i(M_i),\varphi_i(p_i))$, $(\varphi_j(M_j),\varphi_j(p_j))$ are $(\epsilon_{ij},R_{ij})$-related as required.
\end{proof}

\begin{lem}\label{lem:eR-related}
If $(M_1,p_1), (M_2,p_2)$ are $(\epsilon_1,R_1)$-related and $(M_2,p_2),(M_3,p_3)$ are $(\epsilon_2,R_2)$-related then $(M_1,p_1),(M_3,p_3)$ are $(\epsilon_1+\epsilon_2,R_3)$-related where $R_3=\min(R_1 - 2\epsilon_2,R_2 -2\epsilon_1)$.
\end{lem}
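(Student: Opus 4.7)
The plan is to reduce \ref{lem:eR-related} to the already-established transitivity Lemma \ref{lem:related}, by constructing a single metric space $Z$ that simultaneously witnesses the two given $(\epsilon,R)$-relations. The construction is essentially a two-piece version of the gluing in Lemma \ref{lem:convergent2}.

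First I would unpack the hypotheses: let $Y_{12}$ be a metric space with isometric embeddings $\phi_1:M_1\to Y_{12}$, $\phi_2:M_2\to Y_{12}$ witnessing that $(M_1,p_1),(M_2,p_2)$ are $(\epsilon_1,R_1)$-related, and let $Y_{23}$, $\psi_2:M_2\to Y_{23}$, $\psi_3:M_3\to Y_{23}$ witness the second $(\epsilon_2,R_2)$-relation. Now glue $Y_{12}$ and $Y_{23}$ along $M_2$: on the disjoint union $Z'=Y_{12}\sqcup Y_{23}$ I would introduce a pseudo-metric supplied by Lemma \ref{lem:pseudo-metric}, taking the $M_i=\{Y_{12},Y_{23}\}$ in that lemma to be the two pieces and a single bridging pseudo-metric $\dist_L$ on $L=\phi_2(M_2)\cup\psi_2(M_2)$ defined by $\dist_L(\phi_2(x),\psi_2(y))=\dist_{M_2}(x,y)$. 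Quotienting by the zero-distance relation and completing yields a complete metric space $Z$ with isometric embeddings $\varphi_i:M_i\to Z$ for $i=1,2,3$ such that the two embeddings of $M_2$ coincide. By construction, the natural maps $\pi_{12}:Y_{12}\to Z$ and $\pi_{23}:Y_{23}\to Z$ are distance non-increasing and restrict to isometries on each $\phi_i(M_i)$, resp.\ $\psi_i(M_i)$.

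The next step is to verify that the $(\epsilon_1,R_1)$- and $(\epsilon_2,R_2)$-relations persist in $Z$. For the pointed-subset part this is immediate: if $y\in\varphi_1(M_1)\cap B_Z(\varphi_1(p_1),R_1)$, then since $\varphi_1$ is an isometric embedding, $y=\varphi_1(x)$ with $\dist_{M_1}(x,p_1)<R_1$; the $Y_{12}$ relation gives $y'\in M_2$ with $\dist_{Y_{12}}(\phi_1(x),\phi_2(y'))<\epsilon_1$, and since $\pi_{12}$ is non-expanding, $\dist_Z(\varphi_1(x),\varphi_2(y'))<\epsilon_1$. For the pointed-measure part, if $F\subset B_Z(\varphi_1(p_1),R_1)$ is closed, then $(\varphi_1)_*\vol^{(k)}_{M_1}(F)$ equals $(\phi_1)_*\vol^{(k)}_{M_1}(\phi_1(\varphi_1^{-1}(F)))$, the set $\phi_1(\varphi_1^{-1}(F))$ lies in $B_{Y_{12}}(\phi_1(p_1),R_1)$, and the image of its $\epsilon_1$-neighborhood under $\pi_{12}$ is contained in $N_Z^o(F,\epsilon_1)$; pushing forward through $\pi_{12}$ then transfers the inequality in $Y_{12}$ to the desired inequality in $Z$. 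The symmetric inequality and the analogous verification through $\pi_{23}$ show that $(\varphi_1(M_1),\varphi_1(p_1))$ and $(\varphi_2(M_2),\varphi_2(p_2))$ are $(\epsilon_1,R_1)$-related in $Z$ (both as pointed subsets and, for each $k$, as pointed measures), and likewise for the pair $(\varphi_2,\varphi_3)$ with parameters $(\epsilon_2,R_2)$.

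Finally, I would apply Lemma \ref{lem:related} inside $Z$: it yields, directly for the pointed subsets and for each of the $n$ pointed measures, that $(\varphi_1(M_1),\varphi_1(p_1))$ and $(\varphi_3(M_3),\varphi_3(p_3))$ are $(\epsilon_1+\epsilon_2,R_3)$-related in $Z$ with $R_3=\min(R_1-2\epsilon_2,R_2-2\epsilon_1)$. By Definition \ref{defn:Mn-top}, this is exactly what it means for $(M_1,p_1)$ and $(M_3,p_3)$ to be $(\epsilon_1+\epsilon_2,R_3)$-related, completing the proof. The main conceptual obstacle is the gluing step; once one verifies that distance non-increasing maps preserve the asymmetric $(\epsilon,R)$-inequalities (which succeeds because neighborhoods can only grow and pushforward measures are supported on the isometrically embedded copies), the rest is bookkeeping.
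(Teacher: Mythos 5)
Your proof is correct and matches the paper's approach in spirit: the paper's own proof is the one-line citation ``This follows from Lemmas \ref{lem:convergent2} and \ref{lem:related},'' and you have simply unfolded the content of Lemma \ref{lem:convergent2} in the special case at hand. The one place where your version is arguably cleaner than the paper's is that Lemma \ref{lem:convergent2}, as literally stated, requires an $(\epsilon_{ij},R_{ij})$-relation for \emph{every} pair $i,j$, whereas here one only has relations for the pairs $(1,2)$ and $(2,3)$. (This gap is harmless, since any two pointed mm$^n$-spaces are $(\epsilon,R)$-related for $R$ small and $\epsilon$ large, so one could supply a trivial $(\epsilon_{13},R_{13})$-relation; but your direct two-piece gluing avoids even having to say this.) Your gluing also differs slightly in mechanics: the paper glues the disjoint union $\sqcup_i M_i$ along the pseudo-metrics inherited from the $Y_{ij}$, while you glue the ambient spaces $Y_{12}\sqcup Y_{23}$ along the two copies of $M_2$. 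Both are instances of Lemma \ref{lem:pseudo-metric} and both yield the needed isometric embeddings $\varphi_i\colon M_i\to Z$ together with the distance-nonincreasing comparison maps. Your verification that the $(\epsilon_k,R_k)$-relations persist under the gluing, both for pointed subsets and for pointed measures, is the same check the paper performs inside the proof of Lemma \ref{lem:convergent2}, and the final application of Lemma \ref{lem:related} inside $Z$ is exactly the intended finish.
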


\begin{proof}
This follows from Lemmas \ref{lem:convergent2} and \ref{lem:related}.
\end{proof}

\begin{prop}\label{prop:convergent}
A sequence $\{[M_i,p_i]\}_{i=1}^\infty \subset \M^n$ converges to  $[M_\infty, p_\infty] \in \M^n$ if and only if for every $\epsilon,R>0$ there exists an $I$ such that $i>I$ implies $(M_i,p_i)$ is $(\epsilon,R)$-related to $(M_\infty,p_\infty)$.
\end{prop}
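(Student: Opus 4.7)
The forward direction is essentially a direct unpacking of definitions combined with Lemma~\ref{lem:key-Mn}. Suppose $[M_i,p_i] \to [M_\infty,p_\infty]$ in $\M^n$. By Definition~3.2, we obtain a common complete proper separable metric space $Z$ and isometric embeddings $\varphi_i : M_i \to Z$ (for $1\le i \le \infty$) such that $\varphi_i(M_i,p_i) \to \varphi_\infty(M_\infty,p_\infty)$ in the pointed Hausdorff topology and $(\varphi_i)_* \vol^{(k)}_{M_i} \to (\varphi_\infty)_* \vol^{(k)}_{M_\infty}$ weak-$*$ for every $k$. Given $\epsilon,R>0$, the pointed Hausdorff convergence gives an $I$ beyond which the pointed subsets are $(\epsilon,R)$-related in $Z$; Lemma~\ref{lem:key-Mn} applied to the (proper) space $Z$ and the pointed pushforward measures (together with $\dist_Z(\varphi_i(p_i),\varphi_\infty(p_\infty)) \to 0$) provides an analogous $I$ for each $k$-th measure. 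Taking the maximum over these finitely many indices and invoking Definition~\ref{defn:Mn-top} shows $(M_i,p_i)$ is $(\epsilon,R)$-related to $(M_\infty,p_\infty)$ for all sufficiently large $i$.

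For the reverse direction, choose sequences $\epsilon_i \searrow 0$ and $R_i \nearrow \infty$ together with an increasing sequence of indices $I_j$ such that $(M_i,p_i)$ is $(\epsilon_j,R_j)$-related to $(M_\infty,p_\infty)$ for every $i \ge I_j$. After relabeling (or passing to a subsequence indexed by $j$), we may arrange that $(M_i,p_i)$ is $(\bar\epsilon_i,\bar R_i)$-related to $(M_\infty,p_\infty)$ with $\bar\epsilon_i \searrow 0$ and $\bar R_i \nearrow \infty$. Lemma~\ref{lem:eR-related} then yields pairwise relations: $(M_i,p_i)$ and $(M_j,p_j)$ are $(\bar\epsilon_i+\bar\epsilon_j, \min(\bar R_i - 2\bar\epsilon_j, \bar R_j - 2\bar\epsilon_i))$-related. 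Thus the augmented sequence $\{(M_i,p_i)\}_{i=1}^\infty \cup \{(M_\infty,p_\infty)\}$ satisfies the hypothesis of Lemma~\ref{lem:convergent2}, which supplies a complete separable metric space $Z$ and isometric embeddings $\varphi_i : M_i \to Z$ (including $i=\infty$) such that $(\varphi_i(M_i),\varphi_i(p_i))$ is $(\bar\epsilon_i,\bar R_i)$-related to $(\varphi_\infty(M_\infty),\varphi_\infty(p_\infty))$ as pointed subsets of $Z$, and the pointed pushforward measures are similarly $(\bar\epsilon_i,\bar R_i)$-related, for every $i$.

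From this data, pointed Hausdorff convergence $\varphi_i(M_i,p_i) \to \varphi_\infty(M_\infty,p_\infty)$ is immediate from Definition~\ref{defn:pointed-Hausdorff} since $\bar\epsilon_i \searrow 0, \bar R_i \nearrow \infty$, and weak-$*$ convergence $(\varphi_i)_* \vol^{(k)}_{M_i} \to (\varphi_\infty)_* \vol^{(k)}_{M_\infty}$ follows from Lemma~\ref{lem:key-Mn} in the same way. The only point requiring care is that Definition~3.2 demands $Z$ to be proper, whereas Lemma~\ref{lem:convergent2} produces only a complete separable $Z$. This will be the main obstacle and is handled by replacing $Z$ with the closure of $\bigcup_{i \le \infty} \varphi_i(M_i)$, which, because each $M_i$ is a complete separable proper metric space and the embeddings are compatible up to shrinking distortion, is a proper subspace of $Z$: any closed ball $B_Z(\varphi_\infty(p_\infty),R)$ is covered, up to arbitrarily small error, by the compact set $B_{M_\infty}(p_\infty,R+1)$, so it is totally bounded and hence (being closed in a complete metric space) compact. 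With this refinement, the data $(Z,\{\varphi_i\})$ witness the convergence $[M_i,p_i] \to [M_\infty,p_\infty]$ in $\M^n$, completing the proof.
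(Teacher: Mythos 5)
Your overall structure is exactly the paper's: forward direction by unpacking the definition and quoting Lemma~\ref{lem:key-Mn}; reverse direction by Lemma~\ref{lem:eR-related} to get pairwise relatedness, Lemma~\ref{lem:convergent2} to build a common ambient $Z$, and then verifying $Z$ is proper so that Lemma~\ref{lem:key-Mn} applies. That much matches.

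However, your justification of properness has a genuine gap. You claim that any closed ball $B_Z(\varphi_\infty(p_\infty),R)$ is ``covered, up to arbitrarily small error, by the compact set $B_{M_\infty}(p_\infty,R+1)$.'' This is not true: points of $\varphi_1(M_1)$, say, that happen to lie in $B_Z(\varphi_\infty(p_\infty),R)$ are only guaranteed to lie within $\bar\epsilon_1$ of $\varphi_\infty(M_\infty)$, and $\bar\epsilon_1$ need not be small --- the sequence $\bar\epsilon_i$ tends to $0$ but its early terms can be large. So the low-index pieces $\varphi_i(M_i)\cap B_Z(\varphi_\infty(p_\infty),R)$, $i\le N$, are not well-approximated by $M_\infty$, and ``arbitrarily small error'' fails. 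The correct statement requires treating those pieces separately using properness of the individual $M_i$'s: given $\delta>0$, pick $N$ with $\bar\epsilon_N<\delta/2$; then (a) for $i>N$, points of $\varphi_i(M_i)$ in the ball are within $\delta/2$ of the compact set $B_{M_\infty}(p_\infty,R+\delta)$, while (b) for $i\le N$ and $i=\infty$, the relevant points lie in finitely many compact balls $B_{M_i}(p_i,R+C_i)$. Together this yields total boundedness. (The paper runs the equivalent sequential-compactness argument: given a sequence in the ball approximated by points $y_i\in M_{n(i)}$, either some index $j$ repeats infinitely often --- use properness of $M_j$ --- or $n(i)\to\infty$, in which case $\bar\epsilon_{n(i)}\to 0$ forces arbitrarily good approximation by the proper space $M_\infty$.) Your invocation of ``because each $M_i$ is proper'' suggests you had this in mind, but the sentence as written only uses $M_\infty$ and is false without the case split, so it needs to be repaired along the lines above.
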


\begin{proof}
Suppose $\{[M_i,p_i]\}_{i=1}^\infty \subset \M^n$ converges to  $[M_\infty, p_\infty] \in \M^n$. By definition, this means there exist a complete separable proper metric space $Z$ and isometric embeddings $\varphi_i:M_i \to Z$ such that $(\varphi_i( M_i), \varphi_i(p_i))$ converges to $(\varphi_\infty(M_\infty), \varphi_\infty(p_\infty)) $ in the pointed Hausdorff topology and $(\varphi_i)_*\vol_{M_i}^{(k)}$ converges to $(\varphi_\infty)_*\vol_{M_\infty}^{(k)}$ as $i\to\infty$. The proposition now follows from Lemma \ref{lem:key-Mn}.
 
Let us now assume for every $\epsilon,R>0$ there exists $I$ such that $i>I$ imples $(M_i,p_i)$ is $(\epsilon,R)$-related to $(M_\infty,p_\infty)$. By Lemma \ref{lem:eR-related} this implies that for any $i,j >I$, $(M_i,p_i)$ and $(M_j,p_j)$ are $(2\epsilon, R-2\epsilon)$-related. So there exist positive real numbers $\epsilon_i, R_i$ such that 
\begin{itemize}
\item $\lim_{i \to \infty} \epsilon_i = 0, \lim_{i\to\infty} R_i = +\infty$;
\item $(M_i,p_i), (M_j,p_j)$ are $(\epsilon_i,R_i)$-related for every $1\le i<j \le \infty$.
\end{itemize}
By Lemma \ref{lem:convergent2}, there exist a complete separable metric space $Z$ and isometric embeddings $\varphi_i:M_i \to Z$ such that for every $1\le i<j \le \infty$
\begin{itemize}
\item $(\varphi_i(M_i),\varphi_i(p_i)), (\varphi_j(M_j), \varphi_j(p_j))$ are $(\epsilon_i,R_i)$-related;
\item for every $k=1\ldots n$, $((\varphi_i)_*\vol^{(k)}_{M_i}, \varphi_i(p_i)), ((\varphi_j)_*\vol^{(k)}_{M_j}, \varphi_j(p_j))$ are $(\epsilon_i,R_i)$-related;
\end{itemize}
By replacing $Z$ with the closure of the images of the $M_i$'s, we may assume, without loss of generality, that the union $\cup_{i=1}^\infty \varphi_i(M_i)$ is dense in $Z$. Without loss of generality, we may also assume each $M_i \subset Z$ and $\varphi_i$ is the inclusion map. This helps simplify notation.


We claim that $Z$ is proper. It suffices to show that every ball centered at $p_\infty$ is sequentially compact. So let $R>0$ and $\{x_i\}_{i=1}^\infty \subset B_Z(p_\infty,R)$. There is a sequence $\{y_i\}_{i=1}^\infty$ such that for each $i$, $\dist_Z(x_i,y_i)<1/i$ and $y_i \in M_{n(i)}$ for some $n(i)$. It suffices to show that a subsequence of $\{y_i\}_{i=1}^\infty$ is convergent. If there is some $j$ such that $\{y_i\}_{i=1}^\infty \cap M_j$ is infinite then, since $M_j$ is proper, it follows that there is a convergent subsequence. Otherwise, $\lim_{i\to\infty} n(i) = +\infty$. 

Observe that 
$$\dist_Z(p_i,y_i) \le \dist_Z(p_i,p_\infty) + \dist_Z(p_\infty,x_i)+\dist_Z(x_i,y_i) \le \epsilon_{n(i)} + R + 1/i.$$
In other words, $y_i \in B_Z(p_i,R + 1/i + \epsilon_{n(i)})$. If $i$ is large enough then $R_{n(i)} > R + 1/i + \epsilon_{n(i)}$. Because  $(M_{n(i)},p_{n(i)}), (M_\infty, p_\infty)$ are $(\epsilon_{n(i)},R_{n(i)})$-related, 
$$B_Z(p_i,R + 1/i + \epsilon_{n(i)}) \cap M_{n(i)} \subset N_Z^o( M_\infty, \epsilon_{n(i)}).$$
So there exists $z_i \in M_\infty$ with $\dist_Z(y_i,z_i) \le \epsilon_{n(i)}$. Note
$$\dist_Z(z_i,p_\infty) \le \dist_Z(z_i,y_i) + \dist_Z(y_i,x_i) + \dist_Z(x_i,p_\infty) \le \epsilon_{n(i)} + 1/i + R.$$
Because $M_\infty$ is proper this implies $\{z_i\}_{i=1}^\infty$ has a convergent subsequence. Since $\dist_Z(z_i,x_i) \le \dist_Z(z_i,y_i) + \dist_Z(y_i,x_i) \le \epsilon_{n(i)} + 1/i$ tends to zero as $i\to\infty$, this implies $\{x_i\}_{i=1}^\infty$ has a convergent subsequence as required. 

The proposition now follow from Lemma \ref{lem:key-Mn}.
\end{proof}





\begin{lem}\label{lem:open}
For any $[M,p] \in \M^n$ and $\epsilon,R>0$, the set $N_{\epsilon,R}(M,p)\subset \M^n$ is open.
\end{lem}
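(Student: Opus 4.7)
The plan is to verify openness via the sequential criterion supplied by Proposition \ref{prop:convergent} together with the triangle-type estimate of Lemma \ref{lem:eR-related}. Concretely, I want to show that whenever $[M_\infty,p_\infty] \in N_{\epsilon,R}(M,p)$ and $\{[M_i,p_i]\}_{i=1}^\infty$ converges to $[M_\infty,p_\infty]$ in $\M^n$, then $[M_i,p_i] \in N_{\epsilon,R}(M,p)$ for all sufficiently large $i$. (Since the topology on $\M^n$ is specified in terms of sequential convergence, this suffices.)

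First I would unpack the hypothesis: by definition of $N_{\epsilon,R}(M,p)$ there exist $\epsilon' < \epsilon$ and $R' > R$ such that $(M_\infty,p_\infty)$ is $(\epsilon',R')$-related to $(M,p)$. Next, by Proposition \ref{prop:convergent}, the convergence $[M_i,p_i] \to [M_\infty,p_\infty]$ means that for every $\delta, S > 0$ there exists $I$ with $(M_i,p_i)$ being $(\delta,S)$-related to $(M_\infty,p_\infty)$ for all $i > I$. Applying Lemma \ref{lem:eR-related} to this pair of $(\epsilon_1,R_1)$- and $(\epsilon_2,R_2)$-relations yields that $(M_i,p_i)$ is $(\delta+\epsilon',\, \min\{S - 2\epsilon',\, R' - 2\delta\})$-related to $(M,p)$ for $i > I$.

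The work is then to choose $\delta$ and $S$ so that this combined relation actually places $[M_i,p_i]$ inside $N_{\epsilon,R}(M,p)$. Since $\epsilon' < \epsilon$ and $R' > R$, I would fix
\[
\delta := \tfrac{1}{2}\min\{\,\epsilon - \epsilon',\, R' - R\,\} \qquad \text{and} \qquad S := R + 2\epsilon' + 1.
\]
Then $\delta + \epsilon' < \epsilon$, $R' - 2\delta > R$, and $S - 2\epsilon' > R$. With this choice, for all sufficiently large $i$ the pair $(M_i,p_i)$ is $(\epsilon'',R'')$-related to $(M,p)$ with $\epsilon'' := \delta + \epsilon' < \epsilon$ and $R'' := \min\{S-2\epsilon', R'-2\delta\} > R$, so $[M_i,p_i] \in N_{\epsilon,R}(M,p)$, as required.

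There is essentially no serious obstacle here; the content is bookkeeping with the parameters $(\epsilon,R)$ and a single application of the already-proved Lemma \ref{lem:eR-related}. The only point worth double-checking is that the definition of $N_{\epsilon,R}(M,p)$ allows strict inequalities ($\epsilon' < \epsilon$ and $R' > R$), which is precisely what provides the slack used when shrinking $\delta$ and enlarging $S$.
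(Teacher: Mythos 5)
Your approach is essentially the same as the paper's: both arguments rest on unpacking membership in $N_{\epsilon,R}(M,p)$ via some $(\epsilon',R')$-relation with $\epsilon'<\epsilon$, $R'>R$, then invoking Proposition \ref{prop:convergent} and the composition estimate of Lemma \ref{lem:eR-related} to push the relation onto a nearby $[M_i,p_i]$; the paper phrases this as ``the complement is closed'' and yours as ``any sequence converging into the set eventually lies in it,'' which are equivalent in this sequentially-defined topology. One small slip: with $\delta = \tfrac{1}{2}\min\{\epsilon-\epsilon',\,R'-R\}$ you get $R'-2\delta \ge R$, not $R'-2\delta > R$ (equality occurs when the min is realized by $R'-R$), which would leave $R'' = R$ rather than $R'' > R$; taking, say, $\delta = \tfrac{1}{3}\min\{\epsilon-\epsilon',\,R'-R\}$ fixes this and the rest of your bookkeeping then goes through.
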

\begin{proof}
Let $\{[M_i,p_i]\}_{i=1}^\infty$ be a sequence in $\M^n \setminus N_{\epsilon,R}(M,p)$ which converges to $[M_\infty,p_\infty]$. If $[M_\infty,p_\infty] \in N_{\epsilon,R}(M,p)$ then there is an $\epsilon'<\epsilon$ and $R'>R$ such that $(M_\infty,p_\infty)$ and $(M,p)$ are $(\epsilon',R')$-related. Choose $\epsilon'', R''>0$ so that $\epsilon''+\epsilon'<\epsilon$ and $R < \min(R' - 2\epsilon'', R'' - 2\epsilon')$. By Proposition \ref{prop:convergent}, there is an $i$ such that $(M_i,p_i)$ and $(M_\infty,p_\infty)$ are $(\epsilon'', R'')$-related. Lemma \ref{lem:eR-related} now implies $[M_i,p_i] \in N_{\epsilon,R}(M,p)$. This contradiction proves that the complement of $N_{\epsilon,R}(M,p)$ is closed.
\end{proof}

We can now prove Theorem \ref{thm:Mn} which states $\M^n$ is separable and metrizable.

\begin{proof}[Proof of Theorem \ref{thm:Mn}]
First we show $\M^n$ is metrizable. For $[M,p],[M',p'] \in \M^n$, let 
$$\rho( [M,p], [M',p'] ) = \inf \epsilon + \frac{1}{R+ 2\epsilon}$$
where the infimum is over all $\epsilon,R>0$ such that $[M,p]$ and $[M',p']$ are $(\epsilon,R)$-related. In order to check the triangle inequality, let $[M_i,p_i] \in \M^n$ (for $i=1,2,3$) and suppose $(M_1,p_1),(M_2,p_2)$ are $(\epsilon_1,R_1)$-related and $(M_2,p_2),(M_3,p_3)$ are $(\epsilon_2,R_2)$-related for some $\epsilon_1,\epsilon_2,R_1,R_2>0$. By Lemma \ref{lem:eR-related},
\begin{eqnarray*}
\rho( [M_1,p_1],[M_3,p_3]) &\le& \epsilon_1+\epsilon_2 + \frac{1}{\min\{ R_1 - 2\epsilon_2, R_2 - 2\epsilon_1 \} + 2\epsilon_1+2\epsilon_2} \\
&=& \epsilon_1+\epsilon_2 + \frac{1}{\min\{ R_1 +2\epsilon_1, R_2 + 2\epsilon_2 \} } \\
&\le& \left(\epsilon_1 + \frac{1}{R_1 + 2\epsilon_1}\right) + \left(\epsilon_2 + \frac{1}{R_2 + 2\epsilon_2}\right).
\end{eqnarray*}
By minimizing the right-hand side over all $\epsilon_1,\epsilon_2,R_1,R_2$ such that $(M_1,p_1),(M_2,p_2)$ are $(\epsilon_1,R_1)$-related and $(M_2,p_2),(M_3,p_3)$ are $(\epsilon_2,R_2)$-related, we see that $\rho$ satisfies the triangle inequality. It is therefore a metric on $\M^n$. It is continuous by Lemma \ref{lem:open}. So $\M^n$ is metrizable.

To show that $\M^n$ is separable, let $\F^n_\Q$ be the set of all $[M,p] \in \M^n$ such that $M$ is a finite set, and $\dist_M, \vol^{(1)}_M, \ldots, \vol^{(n)}_M$ are rational-valued. Note $\F^n_\Q$ is countable. We claim that $\F^n_\Q$ is dense in $\M^n$. Let $\F^n$ be the set of all $[M,p] \in \M^n$ such that $M$ is finite. An exercise shows that the closure of $\F^n_\Q$ contains $\F^n$. So it suffices to show that $\F^n$ is dense in $\M^n$. For this purpose, let $[M,p] \in \M^n$. Let $\cM^\F(M)$ denote the set of all measures $\mu \in \cM(M)$ with finite support. It is well-known that $\cM^\F(M)$ is dense in the space of Radon measures on $M$ in the weak* topology. So there exist measures $\mu_i^{(k)} \in \cM^\F(M)$ such that $\lim_{i\to\infty} \mu_i^{(k)} = \vol_M^{(k)}$ for every $k$. Let $X_i$ be a finite subset of $M$ containing $\{p\} \cup \cup_{k=1}^n \supp(\mu_i^{(k)})$ such that $\cup_{i=1}^\infty X_i$ is dense in $M$. We may regard $(X_i,p)$ as an mm$^n$-space with distance $\dist_{X_i}$ equal to the restriction of $\dist_M$ to $X_i$ and measures $\vol_{X_i}^{(k)}$ equal to $\mu_i^{(k)}$. By definition $[X_i,p]$ converges to $[M,p]$ in $\M^n$ as $i\to\infty$. So $\F^n$ and therefore $\F^n_\Q$ is dense in $\M^n$ as claimed.
\end{proof}

\end{document}